\documentclass[11pt, a4paper]{article}
\usepackage{graphicx} 
\usepackage{hyperref} 
\usepackage{authblk} 
\usepackage{amsmath,amssymb,amsthm}
\usepackage{mathrsfs}
\usepackage{dsfont}
\usepackage{xcolor}
\usepackage{enumitem}

\usepackage[T1]{fontenc}
\usepackage{lmodern}
\usepackage[babel]{microtype}
\usepackage[english]{babel}

\usepackage{geometry}
\numberwithin{equation}{section}
\numberwithin{figure}{section}

\newtheorem{theorem}{Theorem}
\newtheorem{lemma}[theorem]{Lemma}

\newtheorem{claim}[theorem]{Claim}
\newtheorem{proposition}[theorem]{Proposition}
\newtheorem{remark}[theorem]{Remark}
\newtheorem{definition}[theorem]{Definition}
\newtheorem{conjecture}[theorem]{Conjecture}
\newtheorem{problem}[theorem]{Problem}

\newcommand{\eps}{\varepsilon}
\newcommand{\Prob}{\operatorname{Pr}}
\newcommand{\Exp}{\mathbb{E}}
\newcommand{\abs}[1]{\left\lvert #1 \right\rvert}
\newcommand{\Neighs}{N}
\newcommand{\NeighsKOut}[4]{\Neighs_{#2, #3}^{#1}\!\left(#4\right)}
\newcommand{\NeighsOutL}[3]{\NeighsKOut{L+}{#1}{#2}{#3}}
\newcommand{\NeighsOutR}[3]{\NeighsKOut{R+}{#1}{#2}{#3}}

\newcommand{\Degr}{d}
\newcommand{\DegrKOut}[4]{\Degr_{#2, #3}^{#1}\!\left(#4\right)}
\newcommand{\DegrOutL}[3]{\DegrKOut{L+}{#1}{#2}{#3}}
\newcommand{\DegrOutR}[3]{\DegrKOut{R+}{#1}{#2}{#3}}

\title{Rainbow connectivity Maker-Breaker game}

\author[1]{Juri Barkey}
\author[1]{Bruno Borchardt}
\author[1]{Dennis Clemens}
\author[2]{Milica Maksimovi\'c}
\author[2]{Mirjana Mikala\v{c}ki}
\author[2]{Milo\v{s} Stojakovi\'c}

\affil[1]{Hamburg University of Technology, Institute of Mathematics, Am Schwarzenberg-Campus 3, 21073 Hamburg, Germany}
\affil[2]{Department of Mathematics and Informatics, Faculty of Sciences, University of Novi Sad, Serbia}

\date{}

\begin{document}

\maketitle

\begin{abstract}
    We study biased Maker-Breaker games on a graph system $\{G_1,\ldots,G_s\}$, in which Maker's goal is to claim certain rainbow structures, i.e., specified subgraphs consisting of at most one edge from each graph $G_i$. We consider the rainbow-connectivity game,
    in which Maker wants to claim a rainbow path between every pair of vertices. 
    
    We analyse this game in detail, essentially determining the threshold bias when played on the system of complete graphs, and observing that whether the random graph intuition holds depends on the size of $s$. The key ingredient of our result is the analysis of a Maker's strategy that combines several randomized strategies with an appropriately designed balancing game. As a byproduct, we find the order of the threshold bias for the Maker-Breaker diameter game, and disprove a conjecture by Balogh, Martin and Pluh\'ar. 
    
    Another natural and general way to analyse Maker-Breaker games that are played on a colored board is to require Maker to occupy a rainbow winning set of a given positional game. In the case of the connectivity game, Maker's goal is to claim a rainbow spanning tree. For this game played on the system of complete graphs, we establish matching upper and lower bounds on the threshold bias, up to constant factors.
\end{abstract}

\section{Introduction}

\subsection{Rainbow Combinatorics}

A recent trend in combinatorics is the study of rainbow 
variants of central results in extremal and random graph theory, see, e.g.,
the survey~\cite{sun2024transversal}. One of the earliest and most famous problems in this direction is the well-known 
Ryser-Brualdi-Stein Conjecture~\cite{brualdi1991combinatorial,ryser1967neuere,stein1975transversals} 
on the existence of long transversals in Latin squares, which can be reformulated
as a question on rainbow matchings, and which was recently resolved for every large enough
Latin square of even size in the breakthrough paper of Montgomery~\cite{montgomery2023proof}. Partially motivated by this problem, one popular type of rainbow questions
in the literature considers the search for transversals and rainbow structures in systems of graphs.
The central definition for these problems is as follows.

\begin{definition}
Let $m\in \mathbb{N}$ and let 
$\mathcal{G} = \{G_1,G_2,\ldots,G_m\}$
be a system of graphs
on the same vertex set $V$.
Then a graph $H$ on $V$ with $m$ edges 
is called transversal in $G$ if it contains
exactly one edge of each of the graphs $G_i$,
that is, if there exists a bijection 
$\phi:  E(H) \rightarrow [m]$ such 
that $e$ is contained in $G_{\phi(e)}$ 
for every edge $e\in E(H)$.
Similarly, if $H$ has at most $m$ edges, 
then it is called rainbow in $G$ if it contains
at most one edge of each of the graphs $G_i$.
\end{definition}

Note that one may identify each of the graphs $G_i$
with a distinct color $c_i$. 
Then the union $G=(V,E)$ with $E=\bigcup_{i\in [m]} E(G_i)$
can be considered as a colored multigraph,
and $H\subseteq G$ being rainbow simply means that the edges of $H$ have distinct colors. 
Recent results in this area cover the cases when 
$H$
is a triangle~\cite{aharoni2020rainbow},
a perfect matching or a Hamilton cycle~\cite{anastos2023robust,bradshaw2022one,cheng2025transversal,
ferber2022dirac,joos2020rainbow},
or a more complex structure~\cite{bradshaw2021transversals,bradshaw2021rainbow,
chakraborti2023bandwidth,cheng2021rainbow,gupta2023general,heath2025universality}.
We quickly mention two highlights from this list of papers.  Let $\mathcal{G}=\{G_1,\ldots,G_n\}$ be a system of Dirac graphs, i.e., each graph has minimum degree at least $\frac{n}{2}$. Then Bradshaw et al.~\cite{bradshaw2022one}
prove that $\mathcal{G}$ has at least
$\left( \left(\frac{1}{64}-o(1) \right) n \right)!$ rainbow Hamilton cycles. Moreover, Anastos et al.~\cite{anastos2023robust}
show that there is a constant $c>0$ with the following property: If $G$ is a Dirac graph on $n$ vertices and if additionally $\{H_1,\ldots,H_n\}$ is a system of 
random graphs distributed independently like $G_{n,p}$,
with $p\geq \frac{c\log(n)}{n^2}$, then the system
$\{G\cap H_1,\ldots,G\cap H_n\}$ has a rainbow Hamilton cycle a.a.s.~(asymptotically almost surely). 

\medskip

In the following, we study biased Maker-Breaker games on graph systems, where we put a focus on two different notions of connectivity in the rainbow setting.

\subsection{Maker-Breaker games}

Given a hypergraph $\mathcal{H}=(X,\mathcal{F})$
and an integer $b\in \mathbb{N}$,
the $(1:b)$ Maker-Breaker game on $\mathcal{H}$
is played as follows. Maker and Breaker 
alternatingly claim 
unclaimed elements of the given \textit{board} $X$. In every round, Maker claims 1 element 
and then Breaker claims $b$ elements, until all the elements are claimed.
Maker wins if at the end of the game she
claimed all elements of a \textit{winning set} $F\in\mathcal{F}$, and Breaker wins otherwise, i.e., if he claimed at least one element in every winning set.

\smallskip

Maker-Breaker games have a long history of research,
starting from the seminal papers of Hales and Jewett~\cite{hales1963regularity} and Chv\'atal and Erd\H{o}s~\cite{chvatal1978biased}, with the biggest research output done
in the last roughly twenty years. For an overview on Maker-Breaker games and other positional games, we refer to the survey~\cite{krivelevich2014positional}
and the book~\cite{hefetz2014positional}.

\smallskip

An important fact about biased Maker-Breaker games, which was already observed by Chv\'atal and Erd\H{o}s~\cite{chvatal1978biased}, is that these game are bias monotone. That is, for any game $\mathcal{H}=(X,\mathcal{F})$, if Breaker wins the $(1:b)$ game, then he also wins the $(1:b+1)$ game. In particular,
as long as $\min\{|A|:A\in \mathcal{F}\}\geq 2$, there is a unique integer $b_{\mathcal{H}}$ such that Breaker wins the $(1:b)$ game on $\mathcal{H}$ if and only if $b\geq b_{\mathcal{H}}$.
This integer, referred to as the \textit{threshold bias} of the game $\mathcal{H}$, has become a central object of study in the theory on Maker-Breaker games, see, e.g.,~\cite{allen2017making, balogh2009diameter,bednarska2000biased,
ferber2015generating,gebauer2009asymptotic,hefetz2008planarity,
krivelevich2011critical,liebenau2022threshold}. 

\smallskip

As a prominent and well-studied example, let us consider the \textit{connectivity game} on the complete graph $K_n$. This game is played on the hypergraph $\mathcal{C}_n = (X,\mathcal{F})$, where $X$ is the edge-set of the complete graph, $X=E(K_n)$,
and the winning sets $\mathcal{F}$ are the edge sets 
of all spanning trees in $K_n$. Chv\'atal and Erd\H{o}s~\cite{chvatal1978biased} proved that $b_{\mathcal{C}_n}$ is of the order $\frac{n}{\log(n)}$, leaving it as an open problem to determine the precise constant of the leading term. This was resolved 30 years later by Gebauer and Szab\'o~\cite{gebauer2009asymptotic}, improving on an intermediate result by Beck~\cite{beck1982remarks} to show that $b_{\mathcal{C}_n} = (1+o(1))\frac{n}{\log(n)}$. 

Note that this result is surprising in the following way.
If Maker and Breaker would play purely at random,
Maker's final graph would be a random graph on
$\lceil \frac{1}{b+1}\binom{n}{2} \rceil$ edges.
Such a random graph is known to be connected a.a.s.~if $b\geq (1+\varepsilon)\frac{n}{\log(n)}$ and it is disconnected  a.a.s.~if $b\leq (1-\varepsilon)\frac{n}{\log(n)}$
(see, e.g.,~\cite{alon2016probabilistic,frieze2015introduction}). 
Hence, a game between two random players is ``quite likely'' to have the same winner as the game between two players playing optimally.

This exciting connection between Maker–Breaker games and random graphs is now widely recognized in the literature as the \textit{random graph intuition}, or the \textit{Erd\H{o}s paradigm}. The results by Chv\'atal and Erd\H{o}s and Gebauer and Szab\'o have pushed further the research on threshold biases and their connection to the random graph theory.
There is a number of other positional games that also behave as predicted by the random graph intuition -- at least up to a constant factor -- see, e.g.,~\cite{hefetz2008planarity,hefetz2014positional,krivelevich2011critical,nenadov2023probabilistic, nenadov2016threshold, stojakovic2005positional}.

\subsection{Maker-Breaker games on graph systems}

The aim of this paper is to determine threshold biases and analyse the random graph intuition for two different Maker-Breaker games on graph systems.
We start with the concept of rainbow-connectivity, which was introduced in~\cite{chartrand2008rainbow}.
Given an integer $s\in\mathbb{N}$ and a graph system $\mathcal{G}=\{G_1,\ldots,G_s\}$
on a vertex set $V$, we say that $\mathcal{G}$ is rainbow-connected
if every two vertices of $V$ are connected by a rainbow path in $\mathcal{G}$.
Accordingly, we define the following \textit{rainbow-connectivity game} $\mathcal{C}_{s,n}$.

\begin{definition}[Rainbow-connectivity game $\mathcal{C}_{s,n}$] \label{def:rainbow.conn.game}
Let $1\leq s\leq n$ be integers.
The $(1:b)$ game $\mathcal{C}_{s,n}$ is played as follows.
Given copies $G_1,\ldots,G_s$ of the complete graph
$K_n$ on vertex set $V=[n]$, Maker and Breaker alternately claim edges of 
the board $X=\bigcup_{i\in [s]}E(G_i)$ according to the given bias. Maker wins iff she manages to claim
edges in such a way that by the end of the game,
every two vertices of $V$ are connected by a rainbow path.
That is, she wins iff she claims spanning 
subgraphs $H_i\subseteq G_i$, for every $i\in [s]$, such that
the system $\{H_1,\ldots,H_s\}$ is rainbow-connected.
\end{definition}

\smallskip

Given a constant $s\in \mathbb{N}$ and 
a system $\mathcal{G}=\{G_1,\ldots,G_s\}$,
where each $G_i$ is distributed independently
like $G_{n,p}$, it is not hard to show that
the threshold value for $p$ such that $\mathcal{G}$
is rainbow-connected is of size
$n^{-\frac{s-1}{s}+o(1)}$. Indeed, a standard first moment argument shows that for 
$p \leq n^{-\frac{s-1}{s}-\varepsilon}$ the system 
$\mathcal{G}$ is not rainbow-connected a.a.s.,
while an application of Janson's inequality gives that for 
$p \geq n^{-\frac{s-1}{s}+\varepsilon}$ it is rainbow-connected a.a.s. Hence, having in mind the standard relations between uniform random graphs and binomial random graphs, the random graph intuition \emph{suggests} that the threshold bias of the game 
$\mathcal{C}_{s,n}$ should be of the order $n^{\frac{s-1}{s}+o(1)}$. However, our first two results show that the random graph intuition fails in this case.

\begin{proposition}\label{prop:rainbow-2-conn}
Let $n\geq 2$ be an integer. The threshold bias for the rainbow-connectivity game $\mathcal{C}_{2,n}$ 
on $s=2$ copies of $K_n$ is 
$b_{\mathcal{C}_{2,n}}=2.$
\end{proposition}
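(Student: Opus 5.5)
The plan is to treat the two sides of the threshold separately. For the bound $b_{\mathcal{C}_{2,n}} \ge 2$, I show that Maker wins the $(1:1)$ game. For $b_{\mathcal{C}_{2,n}} \le 2$, I show that Breaker wins the $(1:2)$ game; together with bias monotonicity this gives the claim. The key structural remark is that with only two colours a rainbow path has length at most $2$. So Maker connects $u$ and $v$ iff she owns a copy of $uv$ in some $G_i$, or she owns $uw$ in $G_i$ and $wv$ in $G_{3-i}$ for some vertex $w$.

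\textbf{Maker's side ($b=1$).} I pair, for every pair $\{u,v\}$, its two copies $uv^{(1)}\in E(G_1)$ and $uv^{(2)}\in E(G_2)$. Maker plays the standard pairing strategy. Whenever Breaker claims $uv^{(i)}$, Maker answers with $uv^{(3-i)}$ if it is still free; otherwise she claims an arbitrary free edge, and she also does this in her very first move. Breaker takes only one edge per round, so he can never obtain both copies of a pair. At the end Maker owns a copy of every pair $uv$, i.e.\ a rainbow path of length one between any two vertices. This side is routine.

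\textbf{Breaker's side ($b=2$).} Let $e$ be Maker's first edge. Breaker chooses two vertices $u,v$ not incident with $e$, which is possible for $n\ge 4$. With his two edges in round one he claims both $uv^{(1)}$ and $uv^{(2)}$. Now Maker can only connect $u,v$ through some $w$. For each $w\notin\{u,v\}$ there are exactly two ways to do this, namely owning the pair $\{uw^{(1)},wv^{(2)}\}$ or owning the pair $\{uw^{(2)},wv^{(1)}\}$. All these pairs, over all $w$ and both orientations, are pairwise disjoint, and after round one Maker holds none of their elements by the choice of $u,v$. From then on Breaker plays a pairing strategy on them. Whenever Maker claims an element of one of these pairs, Breaker claims its partner, and he spends his second edge, and all edges in other rounds, arbitrarily. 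Then Maker never completes any pair, so $u$ and $v$ are not rainbow-connected and Breaker wins.

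\textbf{Main obstacle: small boards.} The only genuine subtlety is the choice of $u,v$ disjoint from Maker's first edge, which needs $n\ge 4$. For $n=2$ Breaker cannot win for any bias, since Maker's single edge already connects the only pair. For $n=3$ the greedy choice above fails: Maker's first edge may lie in a blocking pair and be completed in round two. So I would handle $n=3$ by a short direct case analysis of Breaker strategies on the $6$-edge board, or else note the convention under which the statement is meant for small $n$. I expect this small-case bookkeeping, not the general argument, to be the only place needing care.
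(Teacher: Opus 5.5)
Your proof is the paper's proof. For $b=1$ Maker pairs the two copies of each $uv$. For $b=2$ Breaker takes both copies of $uv$ for a pair avoiding Maker's first edge, and then pairs $uw^{(i)}$ with $wv^{(3-i)}$.

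Your concern about small boards is justified and goes beyond the paper, whose argument silently needs $n\geq 4$. For $n=2$, Maker wins with her first edge against any bias, so the statement as written is false there and should read $n\geq 3$. For $n=3$, no pair of vertices avoids Maker's first edge, so the paper's Breaker strategy is not even defined.

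The case $n=3$ that you left open is a short check:
\begin{itemize}
\item Maker ends up with exactly two of the six edges.
\item Two edges make three vertices pairwise rainbow-connected only if they form a rainbow path through all three vertices.
\item So if Maker's first edge is $xy^{(i)}$, Breaker claims $xz^{(3-i)}$ and $yz^{(3-i)}$.
\item Every possible second Maker edge, namely $xy^{(3-i)}$, $xz^{(i)}$ or $yz^{(i)}$, either leaves $z$ isolated or creates a monochromatic path, so Breaker wins.
\end{itemize}
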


\begin{theorem}\label{thm:rainbow-conn.s.constant}
Let $s\geq 3$. 
The threshold bias for the rainbow-connectivity game $\mathcal{C}_{s,n}$ 
on $s$ copies of $K_n$ is of the order
$n^{1-1/\lceil s/2 \rceil}$. More precisely,
there exist constants $C,\beta,\gamma>0$, depending only on $s$, such that the following holds for every large enough $n$. 
\begin{enumerate}
    \item[(a)] If $b\geq Cn^{1-1/\lceil s/2 \rceil} $, then Breaker wins the game.
    \item[(b)] If $b\leq \gamma n^{1-1/\lceil s/2 \rceil}$ then Maker wins the game. Moreover, she has a strategy with which she obtains $\beta n^{s-1} b^{-s}$ rainbow paths of length $s$ between every pair of vertices and for every ordering of the $s$ colors. 
\end{enumerate}
\end{theorem}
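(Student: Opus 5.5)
We prove the two parts separately; in both, the quantity $k:=\lceil s/2\rceil$ enters through a symmetric "meet in the middle" decomposition of rainbow paths.

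\emph{Breaker's side (a).} With $b\geq Cn^{1-1/k}$, Breaker picks two vertices $u,v$ in advance and aims to make every $u$--$v$ rainbow path impossible. A rainbow path uses at most $s$ edges with distinct colors. Such a path starting at $u$ has its first $\lceil s/2\rceil$ or its last $\lfloor s/2\rfloor$ edges near $u$ or near $v$, respectively. Breaker therefore plays a box-type strategy that bounds the growth of Maker's rainbow neighbourhoods. Let $N_j(u)$ be the set of vertices reachable from $u$ by Maker rainbow paths of length at most $j$.

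Breaker splits his bias. Roughly $b/2$ edges per round go to claiming, in every color, all free edges at $u$ and at $v$; this keeps Maker's degree at $u$ and $v$ tiny in every color. The remaining edges grow each of these blocked regions one layer further, in the style of the Chv\'atal--Erd\H{o}s isolation strategy. The target is to show inductively that Maker's $j$-th rainbow neighbourhood of $u$ has size $O((n/b)^{j})$ per color pattern. With $b=Cn^{1-1/k}$ we have $(n/b)^{k}=C^{-k}n$. Hence after $k$ steps from $u$ and about $s-k\leq k$ steps from $v$ the two neighbourhoods are too small to be joined by a Maker edge of an unused color, since Breaker can claim all edges between them in all colors. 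For this last step he needs about $s\cdot|N_k(u)|\cdot|N_{s-k}(v)|$ edges, which fits into his budget over $n/b$ rounds when $C$ is large.

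The main difficulty in (a) is that the neighbourhoods are dynamic and Maker may concentrate her edges. I would handle this with a potential/box game in which each vertex already in $N_j$ is a box, and Breaker claims all of that vertex's incident edges in the relevant colors until the vertex lies too deep to matter. Note that for $s=3$ the claimed order is $n^{1/2}$, which matches this layered count.

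\emph{Maker's side (b).} This is the harder direction, and the abstract indicates the intended approach: combine randomized strategies with a balancing game. Maker splits her moves cyclically among the $s$ colors. In color $i$ she plays a random-like strategy, claiming a uniformly random free edge among those incident to a low-degree vertex. She interleaves this with a degree-balancing (box) game guaranteeing that every vertex ends with degree about $n/b$ in every color while Breaker's degrees stay controlled. The first steps are:
\begin{enumerate}
\item Show via a Beck-type balancing lemma that Maker's graph $H_i$ in each color has minimum degree $\Omega(n/b)$ and maximum degree $O(n/b)$.
\item Show that, by the randomness, codegrees and small structures behave like in $G_{n,p}$ with $p=1/b$. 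Concretely, for fixed $x$ and any ordering of colors, the number of rainbow paths of length $j$ from $x$ is $\Theta((n/b)^j)$, and their endpoints are spread out, with no vertex hit more than $O(\text{average}+\log n)$ times.
\end{enumerate}

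Then Maker grows rainbow trees of depth $\lfloor s/2\rfloor$ and $\lceil s/2\rceil-1$ from $u$ and $v$ respectively, using disjoint color sets according to the ordering. The count of rainbow paths of length exactly $s$ is the sum over middle edges of the last color. Each endpoint set has size $\Theta((n/b)^{k})\geq \gamma^{-k}n$-ish, and Breaker cannot have blocked a constant fraction of the middle edges because Maker's random choices were made independently of Breaker. This yields $\beta n^{s-1}b^{-s}$ paths, consistent with expected count $n^{s-1}p^{s}$.

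The main obstacle is Step 2: Breaker plays adaptively against Maker's random strategy. I would handle it with martingale (Azuma/Freedman) concentration, exposing Maker's choices round by round, combined with the balancing game preventing Breaker from isolating vertices.
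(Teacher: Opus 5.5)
\textbf{Part (a).} Your Breaker strategy has a quantitative gap, not just missing details. First, you only protect $u$ and $v$, so nothing bounds Maker's degrees elsewhere, and the inductive bound $|N_j(u)|=O((n/b)^j)$ is unsupported. Breaker must answer at both endpoints of every Maker edge to keep all Maker degrees $O(sn/b)$. Second, and more seriously, the closing step cannot work. With $k=\lceil s/2\rceil$ you have $(n/b)^k=C^{-k}n$, so $s|N_k(u)||N_{s-k}(v)|$ is of order $(n/b)^s=C^{-s}n^{s/k}$, which is superlinear. But $n/b$ rounds give Breaker only $n$ edges. Equivalently, whenever one neighbourhood gains a vertex, Breaker would have to claim in one round its edges to a set of linear size. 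Moreover, radii $k$ and $s-k$ sum to $s$, so these sets would intersect rather than need a joining edge.

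The missing idea is to make Breaker pay only linearly per round, as the paper does. Ignore colours, since a rainbow path has at most $s$ edges. Let $B$ be the union of Maker's balls of radius $\lceil s/2\rceil-1$ around $u$ and $v$; it has size $O((4sn/b)^{\lceil s/2\rceil-1})$, a small fraction of $b$ for $C$ large. Breaker claims every free edge between $B$ and the set $V_M$ of Maker-touched vertices.
\begin{itemize}
\item Maker touches at most two new vertices per round, so this costs $O(|B|)$ edges per round, unless $B$ itself grows.
\item $B$ can only grow through a free edge at the ball of radius $\lceil s/2\rceil-2$. By spending $b/4$ edges per round on free edges closest to $u,v$, Breaker exhausts those within $O((4sn/b)^{\lceil s/2\rceil-1})$ rounds. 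During that time $|V_M|<b/4$, so growth of $B$ is also affordable.
\end{itemize}
Then any Maker edge closing a $u$--$v$ path joins two touched vertices outside $B$, so the path has length at least $2\lceil s/2\rceil+1>s$.

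\textbf{Part (b).} The decisive step is asserted rather than proved. You claim Breaker cannot block a constant fraction of the middle edges because Maker's random choices were independent of Breaker. They are not: each is uniform only among non-Breaker edges, with timing driven by Breaker through the balancing. Your Maker also never targets the connecting edges at all. Meanwhile Breaker, watching the expansion sets grow, can claim colour-$c$ edges between them, including edges at a vertex before it enters a set.

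Handling this is most of the paper's proof:
\begin{itemize}
\item Alongside the randomized MinBox expansion (graph $\Gamma_1$, with $V=V_L\cup V_R$ keeping the sides disjoint), Maker generates a second random graph $\Gamma_2\sim G_{n,p}$, $p=n^{-(1-1/\lceil s/2\rceil)}$.
\item She claims edges of $\Gamma_2$ via a Spooky Balancing Game whose winning sets are the growing edge sets between $N^{L+}_{M,C_L}(v)$ and $N^{R+}_{M,C_R}(w)$. Edges revealed not to lie in $\Gamma_2$ are haunted.
\item A second SBG caps Breaker's non-haunted degrees at $\delta n$.
\item A bad/crazy-vertex argument bounds the Breaker edges inherited when vertices join the sets.
\end{itemize}
Breaker's remaining edges are exposed for $\Gamma_2$ only at the end, so Chernoff leaves Maker more than $\frac25p|A||B|$ middle edges.

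Your scaling is also off. The sets joined by a middle edge must have size $\Theta((n/b)^{\lceil s/2\rceil-1})=\Theta(n^{1-1/\lceil s/2\rceil})$, not $(n/b)^{\lceil s/2\rceil}=\gamma^{-\lceil s/2\rceil}n>n$. For even $s$, your depth-$s/2$ tree from $u$ would have to saturate, while the expansion lemma only covers sets of size at most $b$. The paper instead expands to depth $\lceil s/2\rceil-1$ on both sides. For even $s$ it prepends one Maker edge at $v$ and sums over its $n/(100b)$ choices.

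The concentration issue you single out is the routine part. MinBox keeps Breaker's degree below $0.14\cdot\frac n2$ wherever Maker still needs edges, so a sequential Chernoff bound suffices.
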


We note that 
the bound $\beta n^{s-1} b^{-s}$ on the number of rainbow paths is optimal up to the constant factor $\beta$, as can by shown by a potential function argument, see our concluding remarks in Section~\ref{sec:concluding} for details. Moreover, our proof of Theorem~\ref{thm:rainbow-conn.s.constant}
also works if $s$ is not a constant, as long as $s=o\left( \sqrt{\log(n)} \right)$; it gives a threshold of the form $b_{C_{s,n}} = n^{1-(2-o(1))/s}$ in this case. 

\smallskip

Moving on, let the number of colors be $s = \omega(\log(n))$.
Consider first a system $\mathcal{G}=\{G_1,\ldots,G_s\}$ where each $G_i$ is distributed independently
like $G_{n,p}$. It is proved by Bradshaw and Mohar~\cite{bradshaw2021rainbow}
that the (sharp) threshold for $p$ such that $\mathcal{G}$
is rainbow-connected is $\frac{\log(n)}{sn}$.
That is, for 
$p \leq (1-\varepsilon) \frac{\log(n)}{sn}$ the system $\mathcal{G}$ is not rainbow-connected a.a.s.,
and for 
$p \geq (1+\varepsilon) \frac{\log(n)}{sn}$ the system is rainbow-connected a.a.s.  Now, in contrast to Theorem~\ref{thm:rainbow-conn.s.constant},
it turns out that in this case the game 
$\mathcal{C}_{s,n}$ follows the random graph intuition, 
at least up to a constant factor. 

\begin{theorem}\label{thm:rainbow-conn.s.large}
Let $\log(n) \ll s \leq n$.
The threshold bias for the rainbow-connectivity game 
$\mathcal{C}_{s,n}$ 
on $s$ copies of $K_n$ satisfies
$$
\left(\tfrac{1}{2}-o(1)\right)\frac{sn}{\log(n)} \leq b_{\mathcal{C}_{s,n}} \leq (1+o(1))\frac{sn}{\log(n)} .
$$
\end{theorem}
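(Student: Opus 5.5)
The proof splits into the two bounds, and both reduce to known results on the connectivity game together with a balancing argument across the $s$ colors.

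\textbf{Upper bound (Breaker wins for $b\geq (1+\varepsilon)\frac{sn}{\log(n)}$).} Breaker isolates a vertex. A rainbow-connected system must in particular make every vertex incident to some Maker edge in some color. So it suffices for Breaker to win the box game in which the ``boxes'' are the stars of vertices in the union multigraph $\bigcup_i G_i$, each of size $s(n-1)$. We view $X$ as the edge set of a multigraph in which every vertex has degree $s(n-1)$. We then run the Chv\'atal--Erd\H{o}s / Gebauer--Szab\'o Breaker strategy. First Breaker builds a clique of size about $n/\log(n)$ in which Maker has no edges, by greedily claiming all $s$ parallel copies of pairs. Then he plays the box game on the stars of the clique vertices. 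The box-game criterion (Chv\'atal--Erd\H{o}s) says Breaker isolates a box of size $k$ among $m$ disjoint boxes if $b\geq (1+o(1))k/\ln m$. Here $k\approx sn$ and $m=n^{1-o(1)}$, so this needs $b\geq(1+o(1))sn/\log(n)$. The only change from the classical proof is the multiplicity $s$. Edges between clique vertices are shared by two boxes, but the same bookkeeping as in Gebauer--Szab\'o's upper bound (Breaker claims all remaining parallel edges inside the current clique) handles this. I expect this part to be routine.

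\textbf{Lower bound (Maker wins for $b\leq(\frac12-\varepsilon)\frac{sn}{\log(n)}$).} The plan is to let Maker build, in the union multigraph, a graph with two properties. First, every vertex must have degree $\Omega(\log n)$ toward a well-expanding structure. Second, the colors used must be essentially distinct along short paths. Concretely, Maker imitates the Gebauer--Szab\'o minimum-degree strategy, which is the source of the constant $\frac12$. With $b\le(\frac12-\varepsilon)\frac{sn}{\log n}$ she can ensure every vertex gets Maker-degree at least $c\log(n)$ within the first part of the game. Meanwhile she plays a random strategy, claiming a uniformly random free edge of a uniformly random color, in alternate rounds. The random edges form, after Breaker's interference, a graph containing a random-like subgraph with about $\frac{sn^2}{b}\gg n\log n$ edges. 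Spread over $s$ colors, that is density $p\approx \frac{n}{b}\cdot\frac{\log n}{n}\cdot\frac1s$ per color, matching the Bradshaw--Mohar regime. Maker always picks the color of each edge uniformly among available ones. Since $s=\omega(\log n)$, a path of length $O(\log n/\log\log n)$ then repeats a color only with small probability, and a union bound or Janson-type argument yields a rainbow path. The key steps are: (i) show Maker's random edges form an expander in which any two sets of size $\Theta(n/\log n)$ are joined, and sets of size $\ge\log n$ expand; (ii) use the minimum-degree part to grow each vertex's neighbourhood to that size with distinct colors; (iii) connect the two neighbourhoods by a short path avoiding the colors already used. Since at most $O(\log n)$ colors are used out of $s=\omega(\log n)$, this forbidden set is negligible.

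\textbf{Main obstacle.} The hard step is (ii)--(iii). The random strategy must be robust to an adaptive Breaker, who may concentrate on killing specific colors at specific vertices. I would handle this with a balancing-game argument. Maker tracks, for each vertex and color class, how many edges Breaker has claimed, and applies a Beck-type potential to guarantee that each vertex retains $\Omega(n s/b)$ available edges spread across $\Omega(s)$ colors. The random choices are then made among these, and Breaker's claimed sets are treated as fixed-in-advance by the standard ``Maker's randomness vs. adversary'' union bound over Breaker's possible strategies restricted to the events needed.
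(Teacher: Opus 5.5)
Your upper bound is the paper's argument: Breaker builds a clique of size $b/(4s)=\Theta(n/\log n)$ in which he owns all $s$ parallel edges, then applies Theorem~\ref{thm:boxgame} to the stars of size $(1-o(1))sn$. That part is fine.

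The lower bound has a genuine gap exactly where rainbow-ness enters. Your claim that, with uniformly random colours, a path of length $L=\log n/\log\log n$ rarely repeats a colour is false in part of the range. The repetition probability is about $1-e^{-L^2/(2s)}$, which tends to $1$ when $\log n\ll s\ll(\log n/\log\log n)^2$. Moreover, Maker's colour choices are not uniform anyway, since Breaker decides which colours remain available at a vertex. Step (iii) has no mechanism at all. The forbidden colours depend on the particular leaf $u$ of the tree at $x$ and the leaf $u'$ of the tree at $y$, and the two trees may also share colours with each other. So ``only $O(\log n)$ of $s$ colours are used, hence negligible'' is random-graph intuition that you give no way to enforce against an adaptive Breaker. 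There is no fixed family of winning sets to which a Beck-type criterion could be applied, because the admissible colours vary from leaf to leaf.

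The idea you are missing is to make rainbow-ness automatic by splitting the colours into disjoint classes: $S_1,S_2$ of size $(1-o(1))s/2$ each, and a small $S_3$ of size $s/\log^{(2)}(n)$. From $x$, grow a tree whose root--leaf paths are rainbow in $S_1$; from $y$, grow one rainbow in $S_2$; each has at least $n/\log^{(2)}(n)$ leaves. Join them by any Maker edge with colour in $S_3$. Beck's criterion guarantees such an edge between all disjoint pairs of sets of that size if Maker plays this subgame only every $t=\lceil\log^{(2)}(n)\rceil$ rounds. Inside $S_i$, colour repetition is handled deterministically, not probabilistically. Each vertex gets $\Theta(\log n)$ out-edges of pairwise distinct colours. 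The expansion statement (Claim~\ref{clm:conn.large.s:expansion}, proved by a union bound over $A,T,i,f,B$) counts at a leaf $v$ only out-edges whose colour lies outside the set $S_v$ of colours on its root--leaf path. Since the trees have depth at most $\log n/\log^{(5)}(n)=o(\log n)$, this discards only an $o(1)$ fraction of the out-edges.

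This also changes how the constant and the randomness work. A minimum-degree game on the full multigraph (vertex degree $s(n-1)$) works up to bias $(1-\delta)sn/\log n$, so Gebauer--Szab\'o is not by itself the source of $\frac12$. In the paper, $\frac12$ comes from the colour split. Every vertex needs out-degree in both $S_1$ and $S_2$, so the game is run on two disjoint copies $H_1,H_2$ of degree $s'(n-1)\approx sn/2$. Lemma~\ref{lem:degree.game.variant} then needs $b'\le(1-\delta)s'n/\log n$, and here $b'=(1+o(1))b$ because the $S_3$ subgame costs only a $1/t$ fraction of Maker's moves.

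In your scheme, half of Maker's moves go to a separate random strategy, which already halves the admissible bias. Once you add the colour split that (iii) needs, you would end at $\frac14$. That random subgame is also unnecessary and, as you suspect, hard to control; a union bound over Breaker's strategies is not a valid tool. In the paper the randomness lives inside the minimum-degree moves. Each out-edge at the active vertex is chosen u.a.r.\ among at least $\eps n/5$ non-Breaker edges of an unused colour of $S_i$ with fewest Breaker edges. Hence conditional probabilities multiply whatever Breaker does, and expansion comes from these edges alone. As a minor correction, $sn^2/b=n\log n/\gamma$ is $\Theta(n\log n)$, not $\gg n\log n$.
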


\bigskip

Another natural and general way to analyse Maker-Breaker games that are played on a colored board is to require Maker to occupy a rainbow winning set of a given positional game. In the case of the connectivity game, Maker's goal is to claim a rainbow spanning tree. 

\begin{definition}[Rainbow-spanning-tree game $\mathcal{RS}_n$] 
The $(1:b)$ game $\mathcal{RS}_{n}$ is played as follows.

Given copies $G_1,\ldots,G_{n-1}$ of the complete graph
$K_n$ on vertex set $V=[n]$, Maker and Breaker alternately claim edges of 
the board $X=\bigcup_{i\in [n-1]}E(G_i)$ according to the given bias. Maker wins iff she manages to claim
a rainbow spanning tree, i.e., a spanning tree on $n$
vertices that contains exactly one edge of 
each of the graphs $G_i$.
\end{definition}

Again, we are able to prove that the game behaves like suggested by the random graph intuition, at least up to a constant factor.

\begin{theorem}\label{thm:rainbow-spanning}
The threshold bias for winning the 
rainbow-spanning tree game $\mathcal{RS}_n$ 
satisfies
$$
\left( \tfrac{\log(2)}{8} - o(1) \right)\frac{n^2}{\log(n)} \leq b_{\mathcal{RS}_{n}} \leq (1+o(1))\frac{n^2}{\log(n)} .
$$
\end{theorem}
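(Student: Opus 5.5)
We prove the two bounds of Theorem~\ref{thm:rainbow-spanning} separately. The board has $(n-1)\binom{n}{2}\approx n^3/2$ elements.

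\textbf{Breaker's side (upper bound).} Breaker will isolate a vertex in a colour-independent way. Identify the copies $G_1,\ldots,G_{n-1}$ as a single multigraph $K_n^{(n-1)}$, in which each pair $uv$ carries $n-1$ parallel coloured edges. Breaker plays a Box-game style strategy in the spirit of Chv\'atal--Erd\H{o}s and Gebauer--Szab\'o. For a vertex $v$, its ``box'' consists of all $(n-1)^2\approx n^2$ coloured edges incident to $v$. Breaker wants some vertex $v$ whose entire box he has claimed before Maker touches it. Then $v$ is isolated in Maker's graph, and no spanning tree, rainbow or not, exists.

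We adapt the Gebauer--Szab\'o argument with boxes of size $k=n^2$ instead of $n$. Breaker first builds a clique of $\Theta(n/\log n)$ vertices untouched by Maker, which is easy since Maker touches only $2$ vertices per round. He then plays the Box game on the boxes of these vertices, always claiming edges of the untouched box of smallest Maker-danger, i.e.\ the MinBox strategy of Gebauer--Szab\'o. The Box-game analysis shows that Breaker fills a box of size about $n^2$ among $N\approx n$ boxes whenever $b\ge(1+o(1))\frac{n^2}{\log n}$. This follows since the standard criterion $\sum_{i\le N} 1/i \cdot b \gtrsim k$ becomes $b\log n\gtrsim n^2$.

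The main technical point on this side is that an edge incident to two candidate vertices lies in two boxes. As in Gebauer--Szab\'o, this overlap costs only lower-order terms, because each box has size $n^2$ while the overlap between two boxes is only $n-1$ edges.

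\textbf{Maker's side (lower bound).} Let $b=c\,n^2/\log n$ with $c<\frac{\log 2}{8}$. Maker's plan has two parts.

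\emph{Part 1 (ordinary connectivity).} Maker ignores colours and plays the Gebauer--Szab\'o connectivity strategy on the multigraph. The proof is via degree boxes: Maker keeps every vertex's degree growing and thereby builds a spanning tree. For this we need the board to be large enough relative to $b$. In the multigraph a vertex has $\approx n^2$ incident edges, so Breaker needs bias $\gtrsim n^2/\log n$ to isolate it, and this part survives for our range of $c$.

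\emph{Part 2 (rainbow condition).} A rainbow spanning tree needs all $n-1$ colours used exactly once. Two approaches are possible.
\begin{enumerate}
\item \emph{Matroid intersection.} Use a Rado/matroid-intersection criterion. Maker's coloured graph $M$ contains a rainbow spanning tree iff, for every set $I$ of colours, the edges of colours in $I$ span a forest-rank of at least $|I|$. This holds iff for every $I$ the subgraph $M_I$ has at most $n-|I|$ components.
\item \emph{Per-colour connectivity.} Maker plays so that every colour class $H_i$ is dense enough. The simplest robust target is that the union of any $j$ colour classes has at most $n-j$ components.
\end{enumerate}

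I would run a potential (Erd\H{o}s--Selfridge/Beck type) argument on the family of ``bad events.'' A bad event is a pair $(I,\text{partition of }V\text{ into }n-|I|+1\text{ parts})$ such that Breaker owns all colour-$I$ edges crossing the partition. There are many such cuts. The factor $\log 2$ and the constant $1/8$ come from this count: roughly $2^{n}$-type choices of partitions and colour sets, weighted against $(1+b)^{-(\text{crossing edges})}$. The weight of a cut isolating $t$ vertices, with $|I|\ge t$, is about $(1-1/b)^{t\cdot n\cdot|I|/\ldots}$.

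Rather than pure Erd\H{o}s--Selfridge, Maker plays randomly. Each Maker edge is a uniformly random free edge, and we check via a Beck-type criterion that the expected number of surviving bad cuts is below $1$. Small cuts, where few vertices are separated, are handled by the Part~1 degree strategy. Large cuts are handled by the counting, where $\log 2$ enters through $2^n$ bipartitions and $\binom{n-1}{|I|}\le 2^n$.

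The hardest step is the accounting for cuts of intermediate size, where both the number of colour sets and the number of partitions are large. I would first try splitting by the number of separated vertices $t$. For $t\le \varepsilon n$ I would use minimum-degree control, forcing each vertex to have Maker edges in many distinct colours. For $t>\varepsilon n$ I would use the potential bound. Matching the constant $\log 2/8$ there is where I expect the real difficulty to lie.
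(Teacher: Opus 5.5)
\textbf{Upper bound.} Your Breaker side is correct but takes a different route from the paper. You isolate a vertex: a clique phase, then the Box game on $\Theta(n/\log n)$ boxes of size about $(n-1)^2$. The boxes are disjoint because Breaker already owns all clique edges, so the overlap issue you raise never arises. This gives exactly $(1+o(1))\frac{n^2}{\log n}$. The paper instead lets Breaker starve a colour. The $n-1$ layers are disjoint boxes of size $\binom{n}{2}$, so Theorem~\ref{thm:boxgame} lets Breaker fill one as soon as $\binom{n}{2}\le (b-1)\log(n-1)$, and then no rainbow spanning tree can exist. That argument needs no clique phase and even gives about half your bound.

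\textbf{Lower bound: a genuine gap.} You identify the right criterion (your Rado condition is equivalent to Theorem~\ref{thm:rainbow.tree.equiv}). You also identify the right family $\mathcal{F}$ of sets $\bigcup_{c\in S}E_c(V_1,\ldots,V_k)$ with $|S|=n-k+1$. But you never show that Maker can hit all of them, and the tools you propose are flawed.
\begin{itemize}
\item The potential is oriented the wrong way. Maker is the bias-$1$ player who must hit every $F\in\mathcal{F}$, and Breaker is the bias-$b$ player who must fill one. So Theorem~\ref{thm:beck_criterion}, with roles swapped, asks for $\sum_{F\in\mathcal{F}}2^{-|F|/b}<1$. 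The weight $(1+b)^{-|F|}$ is the one for a hitter of bias $b$. In particular, $\log 2$ comes from the base $1+q=2$, not from counting bipartitions, and partitions must be counted with up to $n^n$ choices, not $2^n$.
\item A uniformly random Maker loses on her own. Breaker fills all $(n-1)^2$ edges at an untouched vertex within $O(\log n)$ rounds, while each random Maker edge meets that vertex with probability $O(1/n)$.
\item Colour-blind connectivity only handles $k=2$.
\item Maker claims only $\Theta(n\log n)$ edges in total. So per-vertex colour control certifies only $O(\log n)$ colours, far from the $t$ colours needed around $t$ separated vertices when $\log n\ll t\le\varepsilon n$.
\item You never specify how these strategies would share Maker's moves.
\end{itemize}

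\textbf{What is missing.} No split is needed. The single deterministic Beck argument with weights $2^{-|F|/b}$ covers every partition, small cuts included. For example, a singleton cut has $|F|=(n-1)^2$ and weight $n^{-(1-o(1))\log 2/\alpha}$, and there are only $n$ such cuts. What remains is counting, which the paper does in three classes with $b=\alpha n^2/\log n$.
\begin{itemize}
\item If $k\le n/2$ and all parts have size at most $n/2$, then $e(V_1,\ldots,V_k)\ge n^2/4$ and $|S|\ge n/2$. So $|F|\ge n^3/8$, and the weight $\exp(-\frac{\log 2}{8\alpha}n\log n)$ beats the at most $n^n 2^n$ choices.
\item If some part has size $n-t>n/2$, then $|F|\ge t(n-t)^2$, against at most $\binom{n}{t}^2 t^t$ choices.
\item If $k=n-r>n/2$, then at least $n-2r$ parts are singletons. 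So there are at most $n^{(2+o(1))(n-k)}$ such partitions, while $e(V_1,\ldots,V_k)\ge\binom{n}{2}-\binom{r+1}{2}\ge(\frac{3}{8}-o(1))n^2$.
\end{itemize}
The first and third classes are exactly where the condition $\alpha<\frac{\log 2}{8}$ is forced.
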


\subsection{Diameter game}

Finally, note that the rainbow-connectivity game from Definition~\ref{def:rainbow.conn.game} is closely related to the 
\textit{diameter game} $\mathcal{D}_{s,n}$, 
which was introduced by Balogh et al.~\cite{balogh2009diameter}.
In this $(1:b)$ Maker-Breaker game played on the edges of $K_n$, Maker wins if and only if 
she occupies a spanning subgraph of $K_n$
with diameter at most $s$. That is, she wants to have a path of length at most $s$ between every pair of vertices. Balogh et al.~prove and conjecture the following.

\begin{theorem}[{\cite[Theorem~4]{balogh2009diameter}}]\label{thm:diameter.balogh}
    For every $s\geq 3$ there exist constants
    $c_1,c_2>0$ such that the following holds
    for large enough $n$.
    \begin{enumerate}
        \item[(a)] Maker wins the $(1:b)$ game $\mathcal{D}_{s,n}$ if $b\leq c_1 \left( \frac{n}{\log(n)}\right)^{1-1/\lceil s/2\rceil}$,
        \item[(b)] Breaker wins the $(1:b)$ game $\mathcal{D}_{s,n}$ on $K_n$ if $b\geq c_2 n^{1-1/(s-1)}$.
    \end{enumerate}
\end{theorem}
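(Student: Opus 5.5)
Both parts concern the diameter game $\mathcal{D}_{s,n}$ on a single copy of $K_n$; I would prove each directly.

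\textbf{Part (a), Maker's strategy.} Let $k=\lceil s/2\rceil$. I would let Maker build, for every vertex $v$, a bounded-depth tree whose ball $B_k(v)$ of radius $k$ (in Maker's graph) has size $\Omega\big((n/\log n)^{1-1/k}\cdot \text{stuff}\big)$, large enough that any two such balls are joined by a Maker edge or already intersect. Then every pair $u,v$ is at distance at most $2k\le s+1$. For odd $s$ one needs $B_k(u)\cap B_{k-1}(v)\neq\emptyset$ or an edge between $B_{k-1}(u)$ and $B_{k-1}(v)$; I would adjust the radii accordingly.

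Concretely, Maker plays a degree-building stage: using a Beck-type box/balancing strategy, she ensures every vertex gets degree $d\approx n/(b\log n)$ up to constants, with neighbours spread out. Iterating to depth $k-1$ gives balls of size about $d^{k-1}$. For the final connection step, Maker needs an edge between $B_{k-1}(u)$ and $B_{k-1}(v)$ for all pairs. Each pair has $d^{2(k-1)}$ candidate edges and Breaker can claim about $b$ per round. With $b\le c_1(n/\log n)^{1-1/k}$ one gets $d^{2(k-1)}\gg b\log n$, so a Maker-Breaker Erd\H{o}s--Selfridge/Beck criterion on the hypergraph of these edge sets (about $n^2$ winning sets of size $d^{2(k-1)}$) lets Maker hit every one. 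The rounds interleave, so I would run the stages simultaneously and charge Breaker's bias between them.

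\textbf{Part (b), Breaker's strategy.} Breaker isolates one vertex $v$ at distance greater than $s$. He claims all but few edges at $v$ in a box-game fashion, forcing $\deg_M(v)\le n/b$. Then, for each Maker neighbour $w$, he cuts edges greedily to keep $|B_j(v)|\le (n/b)^j$ roughly. Alternatively, it is cleaner to take the theorem from \cite{balogh2009diameter}, since the paper states it as a cited result, and only sketch it: Breaker wins once $(n/b)^{s-1}=o(n)$, i.e.\ $b\ge c_2 n^{1-1/(s-1)}$. The last layer must fail to cover all vertices, which is where the exponent $s-1$ arises.

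\textbf{Main obstacle.} The hard step is Maker's simultaneous control of all $n$ balls: making the $k-1$ expansion layers grow at rate $d$ without Breaker concentrating on one vertex. Here the $\log n$ loss, and hence the $(n/\log n)$ base, is forced by a union bound. I would first try a potential-function (Beck's) argument with weights $(1+1/b)^{-|\text{Maker part}|}$ over all pairs and all partial paths.
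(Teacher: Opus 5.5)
\textbf{Part (a).} The connection step does not work as stated. You apply Beck's criterion (Theorem~\ref{thm:beck_criterion}) to ``about $n^2$ winning sets'' $E(B_{k-1}(u),B_{k-1}(v))$. These sets are not fixed in advance: Maker's own moves create them as the game proceeds, and Breaker may already own many of their edges before the balls reach full size. The criterion, however, needs a fixed hypergraph.

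To make the family static you must take all pairs of disjoint $m$-sets, i.e.\ about $\binom{n}{m}^2\approx n^{2m}$ sets of size $m^2$. The criterion then requires $m\gtrsim b\log n$, not $m^2\gg b\log n$. With your degree $d\approx n/(b\log n)$, $m=d^{k-1}$ and $b=c_1(n/\log n)^{1-1/k}$, one gets $m/(b\log n)=\Theta(c_1^{-k}/\log n)\to 0$, so the static version fails by a logarithmic factor.

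Rescuing this route needs two things:
\begin{itemize}
\item degree $\Theta(n/b)$ with no log loss, which Theorem~\ref{thm:MinBox} provides;
\item an actual proof that $|B_{k-1}(v)|=\Omega((n/b)^{k-1})$. ``Neighbours spread out'' has no mechanism behind it.
\end{itemize}
The paper gets expansion by letting Maker claim, at the vertex prescribed by the MinBox strategy, a uniformly random non-Breaker edge, followed by a union bound over pairs $(A,B)$ (Claim~\ref{clm:rainbow-conn.expansion}(iii)). With both fixes, the static family of all $m$-set pairs does go through for (a): one needs $m=\Theta((n/b)^{k-1})\ge Cb\log n$, which holds for $b\le c_1(n/\log n)^{1-1/k}$ with $c_1$ small. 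The paper goes further. It handles the growing sets directly with the Spooky Balancing Game (Theorem~\ref{thm:SBG}) coupled to a hidden random graph $\Gamma_2$, plus a separate count of Breaker edges claimed before their endpoints join the balls. This removes the logarithm altogether (Theorem~\ref{thm:diameter.new}).

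\textbf{Part (b).} Your sketch does not close either. Capping Maker's degrees at about $n/b$ only gives $|B_s(v)|\lesssim (n/b)^s$. For $b=c_2n^{1-1/(s-1)}$ this is of order $n^{1+1/(s-1)}\gg n$. Nothing in your strategy prevents Maker from attaching every remaining vertex to $B_{s-1}(v)$ by a single edge, so ``the last layer must fail to cover all vertices'' is unsupported. A purely one-sided ball-growth argument yields only $b\gtrsim n^{1-1/s}$.

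The missing ingredient is explicit blocking towards a target. In the proof of Theorem~\ref{thm:rainbow-conn.s.constant}(a), reused for Theorem~\ref{thm:diameter.new}, Breaker fixes two untouched vertices $v,w$ and sets $k=\lceil s/2\rceil-1$. Each round he then does the following:
\begin{itemize}
\item answers each Maker edge with $b/4$ edges at each endpoint, so Maker's degrees stay $O(sn/b)$;
\item spends $b/4$ edges on free edges at the innermost layers $C^i_v\cup C^i_w$, so $C^{\le i}$ is sealed off after $O((n/b)^{i+1})$ rounds;
\item claims all free edges between $C^{\le k}$ and $V_M$.
\end{itemize}
The timing bound shows the last task costs at most $b/4$ per round, and every Maker $v$--$w$ path then has length at least $2k+3>s$. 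This works for $b\ge 24s\,n^{1-1/\lceil s/2\rceil}$, which implies (b) since $\lceil s/2\rceil\le s-1$ for $s\ge 3$.

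Simply citing the original theorem is not a proof. In the paper this statement is itself only quoted, and both parts follow from Theorem~\ref{thm:diameter.new} by bias monotonicity.
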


\begin{conjecture}[Comments after Theorem~4 in~\cite{balogh2009diameter}]\label{conj:balogh.diameter}
The threshold bias $b_{\mathcal{D}_{s,n}}$ is close to the bound for Breaker's side, i.e., the bound in (b).
\end{conjecture}

Adapting parts of our proof for Theorem~\ref{thm:rainbow-conn.s.constant}
we can close the above gap for the diameter game, and disprove 
Conjecture~\ref{conj:balogh.diameter}  for $s\geq 4$.

\begin{theorem}\label{thm:diameter.new}
    Let $s\geq 3$. Then there exist
    constants $c_1,c_2>0$, depending only on $s$,
    such that the following holds
    for every large enough $n$:
    The threshold bias for the diameter game satisfies
    $$
    c_1 n^{1-1/\lceil s/2\rceil}
	\leq 
	b_{\mathcal{D}_{s,n}}    
	\leq
    c_2 n^{1-1/\lceil s/2\rceil}
    .$$
\end{theorem}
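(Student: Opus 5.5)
We prove the two bounds of Theorem~\ref{thm:diameter.new} separately. The lower bound will come from Maker's strategy in Theorem~\ref{thm:rainbow-conn.s.constant}(b). The upper bound needs a new Breaker strategy on a single copy of $K_n$.

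\emph{Maker's side.} We reduce to Theorem~\ref{thm:rainbow-conn.s.constant}(b) by simulation. In the diameter game Maker plays on one copy of $K_n$. She imagines the rainbow-connectivity game $\mathcal{C}_{s,n}$ played on $s$ copies $G_1,\ldots,G_s$. Every edge $e$ that Breaker claims in the real game is regarded as claimed by him in all $s$ copies, so a real Breaker move of $b$ edges becomes an imaginary move of $sb$ edges. Maker answers with the strategy from Theorem~\ref{thm:rainbow-conn.s.constant}(b) against bias $sb\le\gamma n^{1-1/\lceil s/2\rceil}$. When that strategy tells her to claim $e$ in some $G_i$, she claims $e$ in $K_n$ if it is still free; if she already owns $e$, she makes an arbitrary move. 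Any edge free in some copy was never claimed by Breaker in the real game, so every prescribed move is legal. The imaginary game ends with a rainbow path between every pair of vertices, of length at most $s$ (indeed exactly $s$ by part (b)). Projecting it to $K_n$ gives a walk of length at most $s$ in Maker's graph, hence a path of length at most $s$. This gives $b_{\mathcal{D}_{s,n}}\ge c_1 n^{1-1/\lceil s/2\rceil}$ with $c_1=\gamma/s$, and this is the only point where we use the strength of part (b).

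\emph{Breaker's side.} Set $k=\lceil s/2\rceil$, $b=c_2 n^{1-1/k}$, and aim for a strategy that isolates the ball of radius $k-1$ around a single vertex. First, Breaker picks a vertex $v$ and claims edges at $v$ so that Maker's degree at $v$ stays $O(n/b)=O(n^{1/k})$. He can afford this since he spends only a constant fraction of his bias there. More generally, we use a box-game or degree-balancing argument in the spirit of Chv\'atal--Erd\H{o}s and the Balogh et al.\ Breaker strategy. We maintain the invariant that for each $j\le k-1$, Maker's $j$-th neighbourhood $N_j(v)$ has size $O(n^{j/k})$. Whenever Maker claims an edge touching $B_{k-1}(v)=\bigcup_{j\le k-1}N_j(v)$, Breaker responds by claiming free edges at the newly enlarged boundary vertices, keeping their Maker-degrees at most $n^{1/k}$. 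This keeps $|B_{k-1}(v)|=O(n^{(k-1)/k})$. Now take a second vertex $w$ handled in the same way and far from $v$. Maker needs a path of length at most $s\le 2k$ between $v$ and $w$, so she must close an edge between $B_{k-1}(v)$ and $B_{k-1}(w)$, or reach through radius $k$. There are at most $O(n^{2(k-1)/k})=O(b^2)$ such connecting pairs. For odd $s=2k-1$ the radii are $k-1$ and $k-1$, giving at most $2k-1$ edges. For even $s=2k$ we would instead argue with radius $k$ on one side and bias adjusted accordingly, but for both parities the bound obtained is $n^{1-1/k}$.

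The main obstacle is to make this two-ball fence rigorous. Breaker must, for every connecting pair, claim the edge between the two balls as soon as both endpoints enter the balls, before Maker can claim it. Each vertex entering $B_{k-1}(v)$ creates $|B_{k-1}(w)|=O(n^{1-1/k})$ new dangerous edges, and this must be at most $b$ per round. It is exactly this count that forces $b=\Theta(n^{1-1/k})$. The difficulty is that Maker can grow both balls simultaneously and can switch the target pair $(v,w)$. If the direct approach fails, we would first try replacing the fixed pair by a potential-function (Erd\H{o}s--Selfridge type) argument over all short paths through an isolated low-degree vertex, weighting paths by $(1+1/b)^{-\text{(Maker edges)}}$.
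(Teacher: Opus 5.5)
Your Maker half is correct, and it is a legitimate black-box alternative to the paper's argument. Copying each real Breaker edge into all $s$ layers is a legal imaginary move of $sb$ edges. Every edge the part-(b) strategy requests is free or already yours in $K_n$. Any rainbow path has length at most $s$. So $c_1=\gamma/s$ works. The paper instead reruns the part-(b) strategy with a monochromatic colour sequence on a single layer.

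The gap is the Breaker half. It remains a sketch whose central step you yourself flag as unresolved, and as designed it would fail. Fencing only the edges between $B_{k-1}(v)$ and $B_{k-1}(w)$ is too weak, because Maker can prepare structure outside the balls first. For example, she claims $xy$ with $y$ at distance $k-1$ from $w$, and then claims $ux$ with $u\in B_{k-2}(v)$. Neither edge joins the two balls when it is claimed. Yet the second edge closes a $v$--$w$ walk of length at most $2k-1$ in the very move that enlarges $B_{k-1}(v)$, before Breaker can respond. Similarly, a single edge can pull a pre-built high-degree Maker subtree into a ball, since you only control degrees of vertices already near $v$; so your size invariant is not maintained. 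Even statically, such a fence only excludes paths of length at most $2k-1$. Your even-$s$ fix (radius $k$ on one side) needs $|B_k(v)|\approx (n/b)^k\le b$, i.e.\ $b=\Omega(n^{k/(k+1)})$, which is the wrong exponent. (Maker also cannot ``switch the target pair'': Breaker fixes the single pair he separates.)

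The missing observation is that no new strategy is needed. The proof of Theorem~\ref{thm:rainbow-conn.s.constant}(a) ignores colours entirely. It is played on the complete multigraph with multiplicity $s$ and blocks \emph{every} path of length at most $s$ between a fixed pair $v,w$. Its statement, which concerns only rainbow paths, would not suffice, but its proof does. It runs verbatim on $K_n$ and gives $b_{\mathcal{D}_{s,n}}\le 24s\,n^{1-1/\lceil s/2\rceil}$. That proof contains exactly the two ingredients your fence lacks; below $k=\lceil s/2\rceil$ as in your notation.

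First, Breaker claims all free edges between $C^{\le k-1}=C_v^{\le k-1}\cup C_w^{\le k-1}$ and the set $V_M$ of \emph{all} Maker-touched vertices, not just the other ball. Hence the balls grow only by attaching fresh vertices. The last-claimed edge of any $v$--$w$ path then has both endpoints outside $C^{\le k-1}$. So the path has length at least $2(k-1)+3=2k+1>s$, for both parities at once.

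Second, this rule costs at most $b/4$ per round. Claiming $b/4$ edges at both endpoints of every Maker edge bounds all Maker degrees by $1+4sn/b$, so $|C_x^{\le i}|\le 2(4sn/b)^i$. Rule (iii) spends $b/4$ edges at the innermost layer of $C_v\cup C_w$ that still has free edges. This saturates $C^{\le k-2}$ within $4(4sn/b)^{k-1}$ rounds. Therefore, whenever a vertex can still enter $C^{\le k-1}$, we have $|V_M|\le 8(4sn/b)^{k-1}<b/4$.
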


\bigskip

\textbf{Organisation of the paper.} 
In Section~\ref{sec:prelim} we collect several useful tools for positional games, probability and rainbow structures. In Section~\ref{sec:rainbow-conn} we
take a closer look at the rainbow connectivity game. Specifically, we first present the proofs of Proposition~\ref{prop:rainbow-2-conn}, Theorem~\ref{thm:rainbow-conn.s.constant}(a)
and Theorem~\ref{thm:rainbow-conn.s.constant}(b). We then complete the proof of  
Theorem~\ref{thm:diameter.new},
and finally also prove Theorem~\ref{thm:rainbow-conn.s.large}. In Section~\ref{sec:rainbow-tree} we consider the rainbow spanning tree game and prove Theorem~\ref{thm:rainbow-spanning}. Finally, we give some concluding remarks and open problems in Section~\ref{sec:concluding}.

\bigskip

\textbf{Notation.}
Some of the more specific notation used in our proofs is defined later, where we need it. Next to this, we use the following standard notation. We write $[n]:=\{k\in \mathbb{N}:~ 1\leq k\leq n\}$.

If a Maker-Breaker game on the edge set of a graph $G$ is in progress, we let $M$ and $B$ denote the subgraphs induced by edges claimed, respectively, by Maker and Breaker. All edges in $M\cup B$ are called \textit{claimed}, while all the other edges of $G$ are called \textit{free}. 

Given a family of events $\mathcal{E}_n$, $n\in \mathbb{N}$,
we say that $\mathcal{E}_n$ holds \textit{asymptotically almost surely} (a.a.s.), if $\Prob(\mathcal{E}_n)$ tends to $1$ when $n$ goes to infinity. We abbreviate \textit{uniformly at random} by u.a.r. Given functions $f,g:\mathbb{N}\rightarrow \mathbb{R}$, we write
$f=O(g)$ if there is a constant $C>0$ such that $|f(n)|\leq C |g(n)|$, for all $n\in \mathbb{N}$. We write $f=\Theta(g)$ if $f=O(g)$ and $g=O(f)$. Moreover, in the case when $\lim_{n\rightarrow \infty} \frac{f(n)}{g(n)}=0$ holds, we use the following notation synonymously:
$f=o(g)$, or $g=\omega(f)$, or $f\ll g$, or $g\gg f$.
Throughout the paper, $\log$ is the natural logarithm.
Moreover, we let $\log^{(1)}(n):=\log(n)$ and $\log^{(k)}(n) :=\log(\log^{(k-1)}(n))$, for all $k\in\mathbb{N}\setminus \{1\}$.

\section{Preliminaries}\label{sec:prelim}

\subsection{Positional Game Tools}

Here we present several standard tools and concepts from positional game theory that will be used in the subsequent analysis.

\subsubsection{Box Games}

For our proofs we make use of the Box Game from~\cite{chvatal1978biased}, and its variant MinBox from~\cite{ferber2015generating}.
The game $\text{Box}(p,1;a_1,\ldots,a_n)$
is a $(p:1)$ Maker-Breaker game played on a hypergraph $(X,\mathcal{H})$,
where the $n$ winning sets in $\mathcal{H}=\left\{F_1,\ldots,F_n\right\}$, called {\em boxes}, are pairwise disjoint
and satisfy $|F_i|=a_i$ for every $i\in [n]$.
The following lemma 
gives a Maker's winning criterion in the case when all the boxes have the same size.

\begin{theorem}[{\cite[Lemma~3.3.2]{hefetz2014positional}}]\label{thm:boxgame}
Let $a_i=m$ for every $i\in [n]$ and assume that $m \leq (p-1) \log(n)$, then Maker wins the game $\mathrm{Box}(p,1;a_1,\ldots,a_n)$, even if Maker is the second player in the game.
\end{theorem}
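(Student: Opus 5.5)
The approach is the classical balancing strategy for Maker, who plays with bias $p$ and wants to fully claim one box. We assume Breaker moves first, which is the worst case for Maker. Call a box \emph{alive} if Breaker has not yet claimed any of its elements. Each Breaker move kills at most one box. In each of her turns, Maker distributes her $p$ elements among the currently alive boxes so as to keep their loads as equal as possible: she repeatedly puts the next element into an alive box of minimum Maker-load. By induction, the loads of any two alive boxes then differ by at most $1$. If some box is ever completely claimed by Maker, she has won, so we may assume this never happens before the final stage. Also, as long as at least one box is alive, Maker has free elements available to place.

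\textbf{Potential argument.} Let $L_k$ be the total Maker-load on the alive boxes at the moment when exactly $k$ boxes are alive, just before Maker's move. After Breaker's first move there are at most $n-1$ alive boxes and $L_{n-1}\ge 0$; if Breaker does not kill a box, the situation only improves for Maker. Maker then raises the total load to $L_k+p$, still balanced. The most Breaker can do is kill an alive box of maximum load, and by balancedness that load is at most $\lceil (L_k+p)/k\rceil \le (L_k+p)/k+1$. Hence
$$L_{k-1}\ \ge\ (L_k+p)\Bigl(1-\tfrac1k\Bigr)-1 .$$
Passing to averages $a_k:=L_k/k$, this becomes
$$a_{k-1}\ \ge\ a_k+\frac{p}{k}-\frac{1}{k-1}.$$

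\textbf{Summing the recursion.} Sum the inequality from $k=n-1$ down to $k=2$ and use $a_{n-1}\ge 0$. In the last stage exactly one box is alive, and Maker adds $p$ more to it. Its final Maker-load is therefore at least
$$p+\sum_{k=2}^{n-1}\Bigl(\frac pk-\frac1{k-1}\Bigr)\;=\;pH_{n-1}-H_{n-2}\;\ge\;(p-1)H_{n-1}\;\ge\;(p-1)\log n\;\ge\; m.$$
So this box, which Breaker never touched, is fully claimed by Maker, and she wins even as the second player.

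The main obstacle is the rounding step. We must make sure the ceilings in Breaker's choice cost Maker only the $-1/(k-1)$ term in the averaged recursion, so that the loss is of order $H_n$ and not of order $pH_n$. This is exactly where $p-1$ instead of $p$ appears in the hypothesis $m\le (p-1)\log(n)$. A secondary point to check is the boundary accounting: Breaker's first move, and the final round where Maker puts all $p$ elements into the single surviving box (capped by the box size, which is harmless since she then has already completed the box).
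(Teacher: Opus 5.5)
Your proof is correct, and it is essentially the standard argument behind this statement. The paper does not reprove it but imports it as Lemma~3.3.2 of Hefetz, Krivelevich, Stojakovi\'c and Szab\'o: Maker plays the Chv\'atal--Erd\H{o}s balancing strategy on the boxes Breaker has not touched, the analysis tracks the average load of the surviving boxes, and balancedness limits the rounding loss to $\frac{1}{k-1}$ per round, giving the final bound $pH_{n-1}-H_{n-2}\geq (p-1)H_{n-1}\geq (p-1)\log(n)$.
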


Similarly, the game $\text{MinBox}(n,D,\gamma,b)$
is a $(1:b)$ Maker-Breaker game 
with $n$ disjoint boxes $F_1,\ldots,F_n$
of size $|F_i|\geq D$, for every $i\in [n]$. But this time, Maker wins if and only if she manages
to claim at least $\gamma|F|$ elements in each of the boxes $F$. 
Ferber et al.~\cite{ferber2015generating}
provide a criterion for Maker to win this game. At any moment of the game, let us call a box $F$
\emph{active}, if Maker still has
less than $\gamma|F|$ of its elements claimed. By applying the methods from~\cite{gebauer2009asymptotic},
Ferber et al.~give a strategy that additionally 
guarantees suitable  bounds on the number of Breaker's elements in all active boxes throughout the game.

\begin{theorem}[{\cite[Theorem~2.5]{ferber2015generating}}]\label{thm:MinBox}
Let $n,D,b$ be positive integers and let $0<\gamma<1$.
Then there is a strategy $\mathcal{S}$ such that Maker can ensure the following to hold throughout the game
$\mathrm{MinBox}(n,D,\gamma,b)$: for every active box $F$ it holds that
$$
w_B(F) \leq b\cdot (w_M(F) + \log(n) + 1), 
$$
where $w_M(F)$ and $w_B(F)$ denote the number of Maker's and Breaker's element in $F$, respectively.
In particular, if $\gamma<\frac{1}{b+1}$ and $D>\frac{b(\ln n +1)}{1-\gamma(b+1)}$,
then Maker wins the game $\mathrm{MinBox}(n,D,\gamma,b)$.
\end{theorem}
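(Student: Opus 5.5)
The plan is the potential-function argument of Gebauer and Szabó~\cite{gebauer2009asymptotic}, adapted to disjoint boxes as in~\cite{ferber2015generating}. Maker always plays in the active box $F$ that currently maximizes the danger
$$\mathrm{dang}(F) := w_B(F) - b\, w_M(F).$$
We then show that throughout the game every active box satisfies $\mathrm{dang}(F)\le b(\log(n)+1)$. This is exactly the claimed inequality $w_B(F)\le b(w_M(F)+\log(n)+1)$.

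For the main bound we argue by contradiction. Suppose that after some Breaker move in round $g$ an active box $F_g$ has $\mathrm{dang}(F_g)>b(\log n+1)$. Trace back Maker's choices: let $F_i$ be the box Maker played in during round $i$, and define the nested families $I_i=\{F_i,\ldots,F_g\}$ (boxes counted with multiplicity removed, so $|I_i|\le g-i+1$). Consider the average danger $\overline{\mathrm{dang}}_{\mathrm{M}}(I_i)$ just before Maker's move in round $i$, and $\overline{\mathrm{dang}}_{\mathrm{B}}(I_i)$ just before Breaker's move in round $i$. We need two estimates.

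\emph{Maker's step.} Since Maker takes a maximum-danger box in $I_i$, her move lowers the average over $I_i$ by at least $b/|I_i|$ when $F_i\in I_{i+1}$. When $F_i\notin I_{i+1}$, passing from $I_i$ to $I_{i+1}$ does not increase the average, because $F_i$ had maximal danger. Combining these gives $\overline{\mathrm{dang}}_{\mathrm{B}}(I_{i})\ge\overline{\mathrm{dang}}_{\mathrm{M}}(I_{i+1})$ in the removal case, and the analogous inequality with the $-b/|I_i|$ correction otherwise.

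\emph{Breaker's step.} Breaker's $b$ elements raise the average over a family of $k$ boxes by at most $b/k$. The boxes are disjoint, so this holds no matter how he distributes his elements, and the bound is at most $b$ in total.

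Summing these over the rounds, the average danger of the singleton family $I_g=\{F_g\}$ at the end is bounded by roughly $\sum_{k=1}^{n} b/k\le b(\log n+1)$. The point of the bookkeeping is that each distinct box $k$ steps from the end contributes $b/k$, while repeated indices are cancelled by Maker's decrease of $b/|I_i|$. The initial danger is $0$, which yields the contradiction.

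The main obstacle is that the family $I_i$ only shrinks when a new distinct box appears. This case split (repeat box versus new box) has to be bookkept exactly as in Gebauer--Szabó, and boxes that become inactive must be handled as well. An inactive box is never chosen again, and Breaker gains nothing by playing in it. So we restrict all averages to boxes active at time $g$; these were active throughout, since activity is monotone.

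For the consequence, suppose some box $F$ is never deactivated. At the end all of its elements are claimed, so $w_M+w_B=|F|$ while $w_M<\gamma|F|$. The inequality then gives $|F|\le w_M+b(w_M+\log n+1)<\gamma(b+1)|F|+b(\log n+1)$. Rearranging, $|F|(1-\gamma(b+1))<b(\log n+1)$, which contradicts $|F|\ge D>\frac{b(\log n+1)}{1-\gamma(b+1)}$.
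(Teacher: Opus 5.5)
Your proposal is the same argument the paper relies on. The paper quotes this result from \cite{ferber2015generating} without proof. It records in Remark~\ref{rem:MinBox} that the strategy is to play in an active box of maximum danger, and in the appendix it runs exactly this Gebauer--Szab\'o averaging over nested families $J_i$ for Lemma~\ref{lem:degree.game.variant}. Your accounting is correct: the net change is $\le 0$ in rounds where Maker's box recurs later, and at most $b/|I_{i+1}|$ when the family shrinks, so $\mathrm{dang}(F_g)\le b\sum_{k=1}^{n}\frac{1}{k}\le b(\log(n)+1)$. Your derivation of the ``in particular'' statement is also correct.

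Two points need fixing.

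First, the displayed removal-case inequality has the roles of M and B swapped. As written it compares averages across Breaker's move. It should say that the average over $I_{i+1}$ right after Maker's move in round $i$ is at most the average over $I_i$ right before it.

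Second, you should not restrict the averages to boxes active at time $g$. That would discard boxes $F_j$ that Maker completed before time $g$. Breaker's increments in the rounds where Maker played those boxes would then no longer be cancelled, and the harmonic bound breaks. The Maker step only needs the following: each $F_j$ with $i\le j<g$ was active when Maker chose it in round $j$, and $F_g$ is active at time $g$. By monotonicity, every box of $I_i$ is therefore active at round $i$.

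If you also take $g$ to be the first violation, then Breaker must have claimed an element of $F_g$ in the violating move. So $F_g$, like every $F_j$, still had a free element at each earlier round. This makes the max-danger choice, taken among active boxes with a free element, range over all of $I_i$.
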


\begin{remark}\label{rem:MinBox}
In the strategy $\mathcal{S}$~\cite{ferber2015generating},
in each of her moves, Maker claims an arbitrary element from an active box $F$
for which the \emph{danger}, defined as $\text{dang}(F):=w_B(F)-b\cdot w_M(F)$,
is maximal. 
\end{remark}

\subsubsection{Spooky Balancing Games}

Allen et al.~\cite{allen2017making} introduced a game of similar flavor to the MinBox game,
but with two important differences:
firstly, the winning sets do not need to be disjoint, and secondly, additional winning sets may appear during the game. The latter property can become extremely useful when, for instance, Maker would like to have edges between certain neighborhoods of vertices, but these neighborhoods are created later in the game.
While Allen et al.~call such a game a
Spooky Box Game, we prefer renaming it to
\emph{Spooky Balancing Game} (or \emph{SBG} for short), as the winning sets are allowed to intersect and Maker's goal is not to occupy a winning set completely, but instead claim a reasonable fraction of elements in each of the winning sets.
The prefix ``spooky'' announces an additional (third) player, \emph{Ghost}, utilized to model the possible changes of the winning sets and the board.

\smallskip

In full generality, the 
$(m, b, V, h, \ell, M)$-SBG is played as follows.
In the beginning of the game, we are given hypergraph $\mathcal{H}_0$ with a vertex set $V$  and with exactly $h$ hyperedges, possibly empty at the beginning, which are allowed to grow up to size $M$ during the game.
Initially no vertices are taken by Maker, Breaker or the Ghost. If during the game vertices are taken by Maker or Breaker, we call them \textit{claimed},
and if they are taken by Ghost, we call them \textit{haunted}.
Each round starts with Ghost modifying the current hypergraph $\mathcal{H}_{r-1}$ by adding free vertices from $V$ to the hyperedges of $\mathcal{H}_{r-1}$ in an arbitrary way, but so that their sizes do not exceed $M$. Note that here we call a vertex free if
it is neither claimed by any of the players nor haunted  by Ghost yet.
We call the resulting hypergraph $\mathcal{H}_r$. Afterwards, Maker can repeatedly choose
a free element from $V$ and then
the Ghost may haunt it or let Maker claim this element; additionally Ghost may haunt any further free elements. Maker continues doing so until she claimed $m$ elements in the current round or no further free elements are left.
Afterwards, Breaker claims any $b$ free vertices from $V$.

\smallskip

At any point in the game, let $c_F$ be the number of elements claimed from the winning set $F$ and $c_{F_M}$ be the number of elements of $F$ claimed by Maker. Maker is called the winner of the $(m, b, V, h, \ell, M)$-SBG if at every point in the game and for every winning set $F$, the inequality 
\begin{equation}\label{SBG:Goal}
    c_{F_M} \geq \frac{m}{b+m} \cdot c_F - \ell
\end{equation}
holds. The following theorem provides a criterion for Maker to win the game we just described. 

\begin{theorem}[{\cite[Lemma~4.1]{allen2017making}}]\label{thm:SBG}
Let $m,b,h,M$ be integers and $\ell\in \mathbb{R}$ 
such that
\begin{equation*}
    \text{(1)} ~~ M \geq 9 (m+b)\log(h) ~~~~ \text{  and  } ~~~~
    \text{(2)} ~~ \ell \geq \frac{5mb}{m+b}\sqrt{\frac{M\log(h)}{m + b}}
\end{equation*}
hold. Then Maker has a winning strategy for
the $(m,b,V,h,\ell,M)$-SBG on any starting hypergraph $\mathcal{H}_0$ with vertex set $V$ and $h$ hyperedges.
\end{theorem}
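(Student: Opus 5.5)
We would prove the criterion with an exponential potential-function (Chernoff-type) argument in the spirit of Beck and of Gebauer--Szab\'o. For a winning set $F$ define its \emph{deficit} $D_F := \frac{m}{m+b}c_F - c_{F_M}$. Maker wins exactly if $D_F\le \ell$ for all $F$ at all times. Each Breaker element in $F$ raises $D_F$ by $\frac{m}{m+b}$, and each Maker element in $F$ lowers it by $\frac{b}{m+b}$. Ghost's moves (adding free vertices to hyperedges, haunting vertices) never change $c_F$ or $c_{F_M}$, so they do not affect any deficit; this is what makes the spooky setting harmless.

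We fix parameters $\lambda>0$ and $\mu\approx \lambda^2\frac{mb}{(m+b)^2}$ and consider
$$\Phi := \sum_{F\in\mathcal H_r} \exp\bigl(\lambda D_F - \mu c_F\bigr).$$
The $-\mu c_F$ term compensates the second-order (variance) term in a Taylor expansion. For a free element $x$ we set its weight $w(x):=\sum_{F\ni x}\exp(\lambda D_F-\mu c_F)$. Maker's strategy is greedy: in each of her $m$ sequential picks she chooses a free element of maximum current weight; if Ghost haunts it, she simply picks again. The key step is to show that $\Phi$ does not increase over a full round. A Maker element $x$ changes the term of each $F\ni x$ by the factor $e^{-\lambda b/(m+b)-\mu}$, and a Breaker element by $e^{\lambda m/(m+b)-\mu}$. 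Maker's claims only decrease weights, so each Breaker element has weight at most that of each of Maker's picks in that round. Hence, up to the error terms, the increase caused by Breaker is at most $b$ times an average Maker weight $W$, while Maker's decrease is at least about $m$ times $W$.

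Expanding to second order, the first-order terms cancel, since $m\cdot\frac{\lambda b}{m+b} = b\cdot\frac{\lambda m}{m+b}$. The quadratic terms are then absorbed by the $\mu$-terms; this is the role of the choice of $\mu$. One complication is that a single element can lie in several $F$'s, and that Breaker's elements may overlap each other's sets. We handle this with convexity: the per-set multiplicative change over the round is bounded using only the counts of Maker and Breaker elements in $F$ during that round, together with $\lambda(m+b)\le 1$. Condition~(1), $M\ge 9(m+b)\log h$, is what guarantees that the optimal $\lambda$ satisfies $\lambda(m+b)=O(1)$ small enough for these Taylor estimates. Ghost's additions of free vertices do not change $\Phi$, and new sets cannot appear since $h$ is fixed.

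Given monotonicity, $\Phi\le\Phi_0=h$ throughout. Therefore every $F$ satisfies $\lambda D_F-\mu c_F\le \log h$, and since $c_F\le M$,
$$D_F\le \frac{\log h}{\lambda}+\frac{\mu M}{\lambda}\approx \frac{\log h}{\lambda}+\lambda\frac{mbM}{(m+b)^2}.$$
Choosing $\lambda=\frac{m+b}{\sqrt{mb}}\sqrt{\frac{\log h}{M}}$ gives $D_F\le 2\frac{\sqrt{mb}}{m+b}\sqrt{M\log h}$, which is at most the $\ell$ in~(2) after checking constants. Here we use $\sqrt{mb}\ge \frac{mb}{\sqrt{m+b}}\cdot\frac{1}{\sqrt{\max(m,b)}}$-type comparisons; the slack factor $5$ absorbs the Taylor error terms.

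The main obstacle is the one-round monotonicity of $\Phi$ when many sets overlap and Maker's picks interleave with Ghost's haunting. We would first try to bound the change of each set's term by the product of its Maker and Breaker factors. We would then use the greedy ordering, comparing all of Breaker's elements against the weights at the end of Maker's turn, to reduce to the single-set inequality $m\bigl(1-e^{-\lambda b/(m+b)-\mu}\bigr)\ge b\bigl(e^{\lambda m/(m+b)-\mu}-1\bigr)$. That inequality holds for the chosen $\mu$ once $\lambda(m+b)$ is small.
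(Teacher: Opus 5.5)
The paper gives no proof of this statement; it is quoted from Allen et al. Your sketch therefore has to stand on its own. Its skeleton is sound: the potential $\Phi=\sum_F e^{\lambda D_F-\mu c_F}$, greedy maximum-weight picks, and comparing Breaker's elements with Maker's picks of the same round. That comparison is legitimate, since Ghost edits hyperedges only at the start of a round and weights only drop during Maker's turn.

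\textbf{The gap: one-round monotonicity fails, and not by constants.} Your reduction to the single-element inequality $m\bigl(1-e^{-\lambda b/(m+b)-\mu}\bigr)\ge b\bigl(e^{\lambda m/(m+b)-\mu}-1\bigr)$ ignores that all $b$ of Breaker's elements (or all $m$ of Maker's picks) may lie in one hyperedge $F$. Put $T:=\lambda\frac{mb}{m+b}$. The term of $F$ is then multiplied by $e^{T-b\mu}$, and $e^{T-b\mu}-1$ exceeds $b\bigl(e^{\lambda m/(m+b)-\mu}-1\bigr)$ by about $T^2/2$. The first-order terms cancel exactly, so this per-round error must be paid by $(m+b)\mu$, which forces $\mu\gtrsim\lambda^2\frac{m^2b^2}{(m+b)^3}$. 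Your $\mu\approx\lambda^2\frac{mb}{(m+b)^2}$ covers only per-element variance and is smaller by the unbounded factor $\frac{mb}{m+b}$.

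Convexity cannot rescue this, because your intermediate bound $D_F\le 2\frac{\sqrt{mb}}{m+b}\sqrt{M\log h}$ is false. Take $m=b$ and $h=2$ disjoint hyperedges of size $M=\lceil 18b\log 2\rceil$, so (1) holds. After Maker's first turn, some hyperedge contains at most $b/2$ of her elements. Breaker puts all $b$ of his there, and its deficit becomes at least $b/4$. Your bound is $\sqrt{M\log 2}=O(\sqrt b)$, which is smaller for large $b$. The right-hand side of (2) is of order $b$ here, consistent with the example.

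Relatedly, (1) does not give $\lambda(m+b)=O(1)$. For $m=1$ and your $\lambda$ it allows $\lambda(m+b)\approx b/3$, and for $m=b$ even your $T$ is unbounded. What (1) should control is $T$, the largest one-round change of a single exponent.

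\textbf{The repair.} For $\beta:=e^{\lambda m/(m+b)-\mu}\ge 1$ use $\beta^{k}-1\le k(\beta-1)\beta^{b-1}$ for $k\le b$. It then suffices that $b(\beta-1)\beta^{b-1}\le m\bigl(1-e^{-\lambda b/(m+b)-\mu}\bigr)$. Using $b(\beta-1)\beta^{b-1}\le (T-b\mu)e^{T}$ and $1-e^{-x}\ge x-x^2/2$, this holds for $T\le\frac13$ with $\mu=\frac{3T^2}{m+b}=3\lambda^2\frac{m^2b^2}{(m+b)^3}$. Optimizing $\frac{\log h}{\lambda}+\frac{\mu M}{\lambda}$ gives $D_F\le 2\sqrt3\,\frac{mb}{m+b}\sqrt{\frac{M\log h}{m+b}}\le\ell$. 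At the optimum $T=\sqrt{\frac{(m+b)\log h}{3M}}\le\frac13$, by (1).

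This explains the shape of (2): the round granularity $\frac{mb}{m+b}$ times the square root of the roughly $\frac{M}{m+b}$ relevant rounds, rather than your $\frac{\sqrt{mb}}{m+b}\sqrt{M\log h}$. For $m=1$, the only case the paper uses, your $\mu$ and the correct one agree up to a constant, so there your sketch is close. The statement, however, is for general $m$.
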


\subsubsection{Beck's Criterion}

We will make use of the following generalization of the Erd\H{o}s-Selfridge Criterion~\cite{erdos1973combinatorial}.

\begin{theorem}[Beck's Criterion, {\cite[Theorem~1]{beck1982remarks}}]\label{thm:beck_criterion}
Let $(X,\mathcal{F})$ be a hypergraph satisfying
$$
\sum_{F\in \mathcal{F}} (1+q)^{-|F|/p} < 1
$$
then Breaker has a strategy to win the $(p:q)$ Maker-Breaker game on $(X,\mathcal{F})$, if he starts the game.
\end{theorem}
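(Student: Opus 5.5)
We argue with a weighted potential function in the spirit of Erd\H{o}s and Selfridge~\cite{erdos1973combinatorial}. At any moment of the game, call $F\in\mathcal{F}$ \emph{alive} if Breaker has not claimed any element of $F$. Let $u(F)$ be the number of elements of $F$ not yet claimed by Maker. Define
$$
w(F):=(1+q)^{-u(F)/p} \quad\text{for alive } F,\qquad \Phi:=\sum_{F \text{ alive}} w(F),
$$
and, for a free element $x$, set $w(x):=\sum_{F\text{ alive},\,x\in F} w(F)$. Initially $u(F)=|F|$, so $\Phi<1$ by hypothesis. If Maker ever fully claims some $F$, that set is alive with $w(F)=1$, forcing $\Phi\geq 1$. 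Hence it suffices to give a Breaker strategy under which $\Phi$ never increases. Since Breaker starts, we group the game into rounds consisting of Breaker's $q$ moves followed by Maker's $p$ moves.

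\textbf{Breaker's strategy.} Breaker claims his $q$ elements one at a time. Each time, he picks a free element $x_i$ maximising the current weight $w(x_i)$, and we record this value as $w_i$. Claiming $x_i$ kills every alive set containing it, so $\Phi$ drops by exactly $w_i$. Over his turn, $\Phi$ therefore decreases by $\sum_{i=1}^q w_i$. Breaker's moves never increase any $w(F)$, so element weights are nonincreasing during his turn. Consequently, after his turn every free element $y$ satisfies $w(y)\le w_i$ for every $i$, that is, $q\cdot \max_y w(y)\le \sum_i w_i$.

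\textbf{Maker's turn.} Maker then claims $y_1,\ldots,y_p$. An alive set $F$ containing $k_F\le p$ of these elements has its weight multiplied by $(1+q)^{k_F/p}$. The key inequality is convexity: $f(t)=(1+q)^t-1$ is convex on $[0,1]$ with $f(0)=0$ and $f(1)=q$, so $f(t)\le qt$ there. Hence the increase of $\Phi$ is at most
$$
\sum_F w(F)\,\frac{q\,k_F}{p}=\frac qp\sum_{j=1}^p w(y_j)\le q\max_y w(y)\le \sum_{i=1}^q w_i ,
$$
where all weights are measured just before Maker's turn. Thus a full round does not increase $\Phi$, and $\Phi<1$ persists to the end of the game, which means Breaker wins.

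\textbf{Main obstacle.} The delicate step is comparing Maker's $p$ simultaneous gains with Breaker's $q$ sequential losses when both involve overlapping sets. Two facts resolve it. First, the convexity bound $(1+q)^{k/p}-1\le qk/p$ linearises Maker's multiplicative gains. Second, Breaker's greedy choices are made sequentially, and element weights only decrease during his turn, so each of his $q$ deletions is at least the final maximum element weight.
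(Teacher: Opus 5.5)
Your proof is correct. The potential over Breaker-free sets, Breaker's sequential greedy choices (which give $q\max_y w(y)\le\sum_i w_i$ because element weights only drop during his turn), and the convexity bound $(1+q)^{k/p}-1\le qk/p$ for Maker's turn together show that $\Phi$ never increases over a Breaker-then-Maker round, so it stays below $1$. The paper gives no proof of its own and only cites Beck, whose original argument is essentially this weighted Erd\H{o}s--Selfridge potential argument.
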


\subsubsection{A minimum degree game on multigraphs}

We define the following game, denoted by
$\text{MinDeg}^+(G,d,b,\alpha)$.
The game is a $(1:b)$ Maker-Breaker game on
the edges of a multigraph $G$,
with Breaker being the first player.
Whenever Maker claims an edge, she immediately 
gives it a direction.
Let $d_M^+(v)$ denote Maker's outdegree at a vertex $v$.
Then Maker is said to be the winner of the game if
for every vertex $v\in V(G)$ she can reach
$d_M^+(v)=d$ before $d_B(v)\geq (1-\alpha) d_G(v)$ first happens.

\smallskip

Note that we allow Maker to claim the same edge twice in two different rounds, so that she can give both possible directions to that edge. 
Throughout the game, for every vertex $v$ let
$\text{dang}(v)=d_B(v)-2b\cdot d_M^+(v)$. Moreover, 
we say
that $v$ is active as long as $d_M^+(v)<d$.

Motivated by~\cite{gebauer2009asymptotic},
we consider the following 
strategy $\mathcal{S}^+$:
In every round Maker picks an active vertex of largest danger value, then she claims an arbitrary non-Breaker edge at $v$, which is not already an outgoing Maker-edge at $v$, and makes it an outgoing edge at $v$.

\smallskip

The following lemma can be proven analogously
to~\cite[Lemma~3]{krivelevich2011critical},
or~\cite[Theorem~2.3]{ferber2015generating}.
We postpone its proof to the appendix.

\begin{lemma}\label{lem:degree.game.variant}
For every $\delta>0$ there exists $\eps'>0$ such that
the following holds for every $\eps \in (0,\eps')$ and every large enough $n$.
Let $H_1,H_2$ be vertex-disjoint multigraphs 
on $n$ vertices in which between every pair of vertices there are exactly $s'$ edges. Let
$H=(V(H_1)\cup V(H_2),E(H_1)\cup E(H_2))$
and let $b \leq (1-\delta)\frac{s'n}{\log(n)}$.
Then $\mathcal{S}^+$ is a winning strategy for
$\mathrm{MinDeg}^+(H,\frac{\eps s'n}{b},b,\eps)$.
\end{lemma}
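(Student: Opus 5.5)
We follow the potential-function argument of Gebauer and Szab\'o~\cite{gebauer2009asymptotic}, as adapted in~\cite[Lemma~3]{krivelevich2011critical}. Suppose, for contradiction, that Breaker has a strategy against $\mathcal{S}^+$ such that some vertex $v$ reaches $d_B(v)\geq (1-\eps)d_H(v)$ while still active, where $d_H(v)=s'(n-1)$ and $d:=\eps s'n/b$. Say this happens in round $g$. Let $v_1,\dots,v_g$ be the active vertices Maker chose in rounds $1,\dots,g$, and set $v_g:=v$. For $0\le i\le g-1$ let $A_i=\{v_{g-i},\dots,v_g\}$, with repetitions removed. Define the average danger
\[
\overline{\mathrm{dang}}(A)=\frac{1}{|A|}\sum_{u\in A}\mathrm{dang}(u).
\]
We track $\overline{\mathrm{dang}}(A_i)$ both before Maker's move in a round (M-values) and before Breaker's move (B-values). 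At the end the danger of $v$ is at least $(1-\eps)s'(n-1)-2bd$, which is about $(1-3\eps)s'n$ since $2bd=2\eps s'n$. At the start every danger is $0$. So it suffices to show that the total increase of the average danger is smaller than $(1-3\eps)s'n$.

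The two standard estimates have to be redone for our setting.

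\emph{Maker's move.} When $A_i=A_{i-1}$, Maker picks a vertex of maximum danger in $A_i$ and raises its outdegree by one. This lowers its danger by $2b$, so the average does not increase. When $A_i\ne A_{i-1}$, the new vertex $v_{g-i}$ has maximal danger, so adding it does not decrease the average. The factor $2b$ in the danger is there because one Maker edge could in principle help two vertices, but with directed edges only the tail gains outdegree. Either way, Maker's move contributes a nonpositive change to the average.

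\emph{Breaker's move.} The $b$ edges Breaker claims raise the total degree inside $A_i$ by at most $2b$, counting edges with both ends in $A_i$ twice. With multiplicity $s'$, a set $A$ of size $k$ spans at most $s'\binom{k}{2}$ edges. This yields the two regimes used in the original argument.
\begin{itemize}
\item For small $k=|A_i|$, the increase in total danger is at most $\min\{2b,\ s'k(k-1)\}$ plus $b$ for edges leaving the set. In the precise Gebauer--Szab\'o counting this gives an average increase of at most $b/k$ per round, plus lower-order terms.
\item For large $k$, a vertex of $A_i$ can have Breaker-degree at most $(1-\eps)s'n$ while still active. So the average danger is trivially at most $(1-\eps)s'n-2b\cdot(\text{average outdegree})$, and the outdegrees already spent inside $A_i$ lower it by about $2b\cdot$ (number of Maker moves)$/k$.
\end{itemize}

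Summing the Breaker increments over the phases where $|A_i|=k$ gives a harmonic sum of order $b\log n$. Since $b\le (1-\delta)s'n/\log n$, this is roughly $(1-\delta)s'n$. Choosing $\eps'$ small compared with $\delta$, this falls below $(1-3\eps)s'n$, which is the desired contradiction.

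The main obstacle is the harmonic-sum bound, in particular handling the presence of $H_1,H_2$ as disjoint components. The clean way is to note that edges only join vertices within the same $H_j$, so each $H_j$ behaves like the complete multigraph considered in~\cite{krivelevich2011critical}, and $n$ is the size of one part. For this reason I would simply split the $\log$ factor across parts, and check that the choice $d=\eps s'n/b$ keeps $2bd=2\eps s'n$ negligible compared with $\delta s'n$.
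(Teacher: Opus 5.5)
Your skeleton is the paper's: the Gebauer--Szab\'o average-danger argument over the backward sets of vertices chosen by $\mathcal{S}^+$, with $s'$ entering only through edges inside the set. However, the quantitative step that yields the contradiction is not established. You need the total increment of the average danger to be at most $b\log n$ plus an error small compared with $\delta s'n$. A harmonic sum ``of order $b\log n$'' is not enough; the coefficient on $b\log n$ must be $1$.

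Your treatment of large sets $A$ cannot give this. Every active vertex has outdegree below $d=\eps s'n/b$, so the bound ``average danger at most $(1-\eps)s'(n-1)-2b\cdot(\text{average outdegree})$'' is never below $(1-\eps)s'(n-1)-2bd$. That is the very value your backward chain starts from, so the bound cannot contradict any lower bound the chain produces.

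What is needed, and what the paper does, is a split at $k=n/\log n$:
\begin{itemize}
\item For phases with $|A|\le k$, use the increment $(b+e_{\mathrm{new}}(A))/|A|$, where $e_{\mathrm{new}}(A)$ is the number of edges inside $A$ claimed in that Breaker move. These inside-edge contributions sum to at most $ks'=o(s'n)$, because each vertex of $A$ lies on fewer than $s'|A|$ edges inside $A$.
\item For phases with $|A|>k$, use the crude bound $2b/|A|$. There are at most $|V(H)|=2n$ phases, so these terms sum to $2b\log(2n/k)=2b(\log 2+\log\log n)=o(s'n)$.
\end{itemize}
The split is essential. Using $2b/|A|$ throughout gives up to $2(1-\delta)s'n$, and using the inside-edge bound throughout gives an error of order $s'n$. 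Also, your expression ``$\min\{2b,s'k(k-1)\}$ plus $b$'' should be $\min\{2b,\,b+e_{\mathrm{new}}(A)\}$.

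Two further points. First, summing over phases rather than over all rounds requires an explicit cancellation. In every round in which the backward set does not change, Maker's move lowers the average danger by exactly $2b/|A|$, which cancels Breaker's increase of at most $2b/|A|$. You mention the drop of $2b$, but then only call Maker's contribution nonpositive and speak of an increase ``per round''. Summed over all rounds, that would be fatal.

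Second, treating $H_1$ and $H_2$ as separate complete-multigraph games and ``splitting the $\log$ factor across parts'' is not a valid reduction. Maker makes a single move per round, and $\mathcal{S}^+$ selects the most dangerous active vertex in all of $V(H)$, while Breaker may spend his whole bias in one component. So the components do not decouple. No reduction is needed: run the argument on $V(H)$ itself. The disjoint union then costs only the $\log 2$ in $\log(2n)$ above, together with $d_H(v)=s'(n-1)$ for every vertex.
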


\subsection{Probabilistic Tools}

In our proofs we apply several randomized strategies. In order to analyse these, we make use of the following inequalities that bound the probability that certain random variables deviate too much from their expectation. 
The lemma follows from the well-known Chernoff inequalities (see, e.g.,~\cite{janson2011random}) by a simple coupling argument, see, e.g.,~\cite[Lemma~9]{clemens2025maker}.

\begin{lemma}\label{lemma:Chernoff_modified}
Let $X_1,X_2,\ldots,X_n$ be a sequence of
$n$ Bernoulli random variables 
$X_i\sim Ber(p_i)$ where the $i$-th 
Bernoulli experiment, for every $i\in [n]$, is performed
after and hence independently of
the previous experiments, but where
$p_i$ may depend on the outcome
of these previously performed Bernoulli experiments.
Let $X = X_1+X_2+\ldots+X_n$.
\begin{itemize}
    \item[(a)] If $p_i\geq p$ for every $i\in [n]$, then
    $\Prob[X<(1-\delta)np]<\exp\left(-\frac{\delta^2np}{2}\right)$ for every $\delta>0.$
    \item[(b)] If $p_i\leq p$ for every $i\in [n]$, then 
    $\Prob[X>(1+\delta)np]<\exp\left(-\frac{np}{3}\right)$ for every $\delta\geq 1.$
\end{itemize}
\end{lemma}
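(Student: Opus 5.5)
We prove both parts by coupling $X$ with a sum of \emph{independent} $Ber(p)$ variables and then applying the standard Chernoff bounds (see, e.g.,~\cite{janson2011random}). Let $U_1,\ldots,U_n$ be independent random variables, each uniform on $[0,1]$. We realise the experiments sequentially by setting $X_i:=\mathds{1}[U_i\leq p_i]$, where $p_i$ is computed from the outcomes $X_1,\ldots,X_{i-1}$. These outcomes are functions of $U_1,\ldots,U_{i-1}$ only. Hence $U_i$ is independent of $p_i$, and conditionally on the past $X_i\sim Ber(p_i)$, as required. So $(X_1,\ldots,X_n)$ has exactly the law described in the statement. Alongside this, set $Y_i:=\mathds{1}[U_i\leq p]$. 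The $Y_i$ are i.i.d.\ $Ber(p)$, so $Y:=\sum_i Y_i\sim \mathrm{Bin}(n,p)$.

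For (a), the assumption $p_i\geq p$ gives $X_i\geq Y_i$ pointwise, hence $X\geq Y$. Therefore $\Prob[X<(1-\delta)np]\leq \Prob[Y<(1-\delta)np]$. The classical lower-tail Chernoff bound gives $\Prob[Y<(1-\delta)np]\leq \exp(-\delta^2np/2)$ for $0<\delta<1$. For $\delta\geq 1$ the event $X<(1-\delta)np\leq 0$ is empty, so the claim is trivial. For (b), the assumption $p_i\leq p$ gives $X\leq Y$ pointwise, and hence $\Prob[X>(1+\delta)np]\leq \Prob[Y>(1+\delta)np]$. The upper-tail Chernoff bound gives $\Prob[Y>(1+\delta)np]\leq\exp\left(-\frac{\delta^2}{2+\delta}np\right)$. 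The function $\delta\mapsto \delta^2/(2+\delta)$ is increasing and equals $1/3$ at $\delta=1$. So for $\delta\geq1$ this is at most $\exp(-np/3)$.

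The only delicate point is making the coupling rigorous when the $p_i$ are adapted to the past. This requires that $U_i$ is independent of $\sigma(U_1,\ldots,U_{i-1})$, which is exactly how the $U_i$ were chosen. If one prefers to avoid the coupling, there is a direct alternative via the exponential moment method. For $\lambda>0$ in case (b), the tower property gives
$$\Exp[e^{\lambda X}]=\Exp\left[e^{\lambda(X_1+\cdots+X_{n-1})}\,\Exp[e^{\lambda X_n}\mid X_1,\ldots,X_{n-1}]\right]\leq (1+p(e^\lambda-1))\,\Exp[e^{\lambda(X_1+\cdots+X_{n-1})}],$$
and iterating yields $\Exp[e^{\lambda X}]\leq (1+p(e^\lambda-1))^n$. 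For case (a) one uses $\lambda<0$ in the same way. These are exactly the moment bounds used in the usual proof of Chernoff's inequality, so the same optimisation over $\lambda$ gives both tails.

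The strict inequalities in the statement are not a real obstacle. They follow from the fact that the Chernoff bounds are strict whenever $np>0$, and the case $np=0$ is trivial.
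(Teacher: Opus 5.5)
Your proof is correct and takes the same approach as the paper, which only remarks that the lemma follows from the standard Chernoff bounds by a simple coupling argument (citing \cite[Lemma~9]{clemens2025maker}). Your coupling, using one independent uniform variable per experiment to dominate $X$ by (or be dominated by) a $\mathrm{Bin}(n,p)$ variable, is exactly such a stochastic-domination argument, and you also handle the edge cases correctly ($\delta\geq 1$ in (a), $np=0$, and the strict inequalities).
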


\subsection{Rainbow Tools}

For the study of the rainbow spanning tree game, we make use of the following result.

\begin{theorem}\label{thm:rainbow.tree.equiv}
Let $G = (V, E)$ be a multigraph on $n$ vertices and let $c \colon E \rightarrow [n - 1]$ map each edge of $G$ to a color. The following two statements are equivalent.
\begin{enumerate}
    \item[(1)] $G$ has a rainbow spanning tree.
    \item[(2)] Every $k$-partition of $V$ must have edges of at least $k - 1$ distinct colors between the partition classes, for every $k \in [n]$.
\end{enumerate}
\end{theorem}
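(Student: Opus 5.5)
The plan is to derive Theorem~\ref{thm:rainbow.tree.equiv} from the matroid intersection theorem (Edmonds), or equivalently from Rado's theorem on independent transversals in a matroid. We view $E$ as a ground set carrying two matroids. The first is the graphic matroid $M_1$ of $G$, whose independent sets are the forests. The second is the partition matroid $M_2$ induced by the colouring $c$, whose independent sets are the edge sets with at most one edge of each colour. A rainbow spanning tree is exactly a common independent set of size $n-1$, since a forest with $n-1$ edges on $n$ vertices is a spanning tree and $n-1$ colours are available. By the matroid intersection theorem, the maximum size of a common independent set equals
$$\min_{A\subseteq E}\bigl(r_1(A)+r_2(E\setminus A)\bigr),$$
where $r_1(A)=n-\kappa(A)$, with $\kappa(A)$ the number of components of $(V,A)$, and $r_2(E\setminus A)=|c(E\setminus A)|$ is the number of colours appearing on $E\setminus A$.

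For (1)$\Rightarrow$(2), let $T$ be a rainbow spanning tree and fix a $k$-partition of $V$. Contracting the parts turns $T$ into a connected graph on $k$ vertices, so $T$ has at least $k-1$ edges crossing between the parts. These edges have distinct colours because $T$ is rainbow.

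For (2)$\Rightarrow$(1), it suffices to show that $r_1(A)+r_2(E\setminus A)\ge n-1$ for every $A\subseteq E$. Fix $A$ and let the partition of $V$ be given by the components of $(V,A)$, so it has $k=\kappa(A)$ parts. Every edge crossing between these parts lies in $E\setminus A$. By (2), those crossing edges carry at least $k-1$ colours, hence $r_2(E\setminus A)\ge k-1$. Therefore
$$r_1(A)+r_2(E\setminus A)\ge (n-k)+(k-1)=n-1,$$
and the minimum above is at least $n-1$. So a common independent set of size $n-1$ exists, which is the desired rainbow spanning tree.

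The main step to get right is the choice of the partition in the (2)$\Rightarrow$(1) direction: we take the components of $A$ rather than an arbitrary partition, so that every crossing edge is forced into $E\setminus A$. This is also why (2) must hold for every $k$, including $k=1$ and $k=n$.

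If we want to avoid quoting matroid intersection, we could instead run an augmenting-path argument on a maximum rainbow forest $F$. Assuming $F$ is not spanning, we would grow the set of vertices and colours reachable through exchange sequences, and show that the components of a suitable subforest give a partition violating (2). This is exactly the content of the matroid intersection proof, so the citation route is cleaner and is the one we would take.
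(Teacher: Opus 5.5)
Your proposal is correct and follows the same route as the paper. The paper gives no proof of its own and only notes that the result follows from Edmonds' matroid intersection theorem; you apply that theorem to the graphic matroid and the colour partition matroid, and you also supply the short rank computation (taking the partition into the components of $A$) that the paper leaves implicit.
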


We note that the above result can be deduced directly from the matroid intersection theorem of Edmonds~\cite[Statement~69]{edmonds2003submodular}, see, e.g., \cite[page~703]{schrijver2003combinatorial}.

\bigskip

\section{Rainbow connectivity game and diameter game} \label{sec:rainbow-conn}

\subsection{Proof of Proposition~\ref{prop:rainbow-2-conn}}

\begin{proof}[Proof of Proposition~\ref{prop:rainbow-2-conn}]
When $b=1$, for every two vertices $v$ and $w$, Maker can pair the two edges between $v$ and $w$.
Whenever Breaker claims one edge, Maker takes simply the other edge of the pair. This way she creates a rainbow path of length 1 between any pair of vertices, and wins.

When $b=2$, let Breaker in the first round claim both edges between a pair of vertices $v,w$ that Maker did not touch in her first move. Afterwards, whenever Maker claims an edge $xv$ (or $xw$) in one of the two colors,
Breaker simply claims $xw$ (or $xv$) in the opposite color, thus preventing Maker from
finishing a rainbow path between $v$ and $w$. That is, Breaker wins.
\end{proof}

\subsection{Proof of Theorem~\ref{thm:rainbow-conn.s.constant}(a)}

\begin{proof}[Proof of Theorem~\ref{thm:rainbow-conn.s.constant}(a)]
Let $C=24s$. By the bias monotonicity
we may assume
that $b = Cn^{\frac{k}{k+1}}$ with $k:=\lceil \frac{s}{2} \rceil-1$. Moreover, whenever needed,
we may assume that $n$ is large enough.

In the following we show that Breaker can play in such a way that at the end of the game there exist two vertices $v,w$ between which Maker has no path of length at most $s$.
That is, we completely ignore colors, and can thus assume to play on a complete multigraph on $n$ vertices where every two vertices are connected by exactly $s$ edges. In order to describe Breaker's strategy, we first introduce some useful notation: at any moment in the game 
we let $V_M$ denote the set of all vertices
that have degree at least 1 in Maker's graph.
Moreover, for every vertex $x$, we let
$C_x$ denote Maker's component containing $x$,
and for every $i\in \mathbb{N}_0$, we let
$$
C^i_x = \{y\in V:~ \text{dist}_M(x,y)=i\}
~~
\text{and}
~~
C^{\leq i}_x = \{y\in V:~ \text{dist}_M(x,y) \leq i\}
$$
denote the set of vertices that
have distance exactly, or respectively, at most $i$ to the vertex $x$ in Maker's graph. Finally we set
$C^{\leq i}:= C_v^{\leq i} \cup C_w^{\leq i}$.

\bigskip

\textbf{Strategy:} After Maker's first move,
Breaker fixes two vertices $v,w$ that Maker did not touch. In each round Breaker then plays as follows:
Assume Maker claimed an edge $xy$ in her last move, then Breaker answers by claiming edges according to the following four points:
\begin{itemize}
\item[(i)] Breaker claims $\frac{b}{4}$ edges incident with $x$ (or all free edges at $x$ if there are fewer),
\item[(ii)] Breaker claims $\frac{b}{4}$ edges incident with $y$ (or all free edges at $y$ if there are fewer),
\item[(iii)] Breaker claims $\frac{b}{4}$ edges iteratively as follows (or fewer if the iteration stops):
choose the smallest $i$ such that there is a free edge intersecting $C^i_v\cup C^i_w$, then claim such a free edge,
\item[(iv)] Breaker claims all free edges 
between $C^{\leq k}$ and $V_M$.
\end{itemize}
If point (iv) requires Breaker to claim more than $\frac{b}{4}$ edges, he forfeits the game.

\bigskip

\textbf{Strategy discussion:} 
Our goal is to show that Breaker never forfeits the game, and that this way Maker is never able to create a path of length at most $s$ between $v$ and $w$. We start with a few simple claims.

\begin{claim}\label{clm:Breaker:conn.bound.close.vertices}
Throughout the game, as long as Breaker does not forfeit, we have
$$|C^{\leq i}_x| \leq 2 \left(\frac{4sn}{b}\right)^i$$
for every $x\in V$ and every $i\in \mathbb{N}_0$.
\end{claim}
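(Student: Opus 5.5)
The natural route is a degree bound on Maker's graph followed by induction on $i$. The key observation is that Breaker's points (i) and (ii) keep Maker's maximum degree small. Every time Maker claims an edge $xy$, Breaker immediately claims $\frac{b}{4}$ further edges at $x$ and $\frac{b}{4}$ at $y$, or all remaining free edges at that vertex. So whenever Maker increases the degree of a vertex $x$ by one, at least $\frac{b}{4}$ previously free edges at $x$ get claimed by Breaker, unless the free edges at $x$ run out, after which Maker can never claim another edge at $x$. The multigraph has total degree $s(n-1)$ at every vertex. Hence for every vertex $x$, as long as Breaker does not forfeit,
$$d_M(x)\cdot \Bigl(1+\frac{b}{4}\Bigr) \leq s(n-1) + \Bigl(1+\frac{b}{4}\Bigr),$$
which gives $d_M(x) \leq \frac{4sn}{b}+1$.

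Some care is needed with the last edge at $x$, the one after which the free edges are exhausted; that edge accounts for the additive $+1$. Also, edges at $x$ that Breaker claimed via other rules only help, since they only reduce the free edges. For $b \le 4sn$, which holds since $b = Cn^{k/(k+1)}$, we then have $d_M(x) \leq 2\cdot\frac{4sn}{b}$, and in particular $d_M(x) \leq \frac{4sn}{b}+1$.

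The claim then follows by induction on $i$. The case $i=0$ is $|C^{\leq 0}_x| = 1 \leq 2$. For the inductive step, every vertex at distance $i$ has a neighbour at distance $i-1$, so
$$|C^{\le i}_x| \le 1 + \sum_{j=0}^{i-1} |C^{j}_x|\cdot \Delta(M).$$
This gives roughly $1 + \Delta(M)\,|C^{\le i-1}_x|$, which is where the factor-$2$ slack must be spent. With $D := \frac{4sn}{b}\geq 1$ and $\Delta(M)\le D+1$, a crude bound like $|C^{\le i}_x|\le (D+1)^i$ does not yield $2D^i$ for large $i$. So I would instead refine the degree count. The cleaner version is that each Maker edge at $x$, except possibly the final one, is accompanied by a full batch of $\frac{b}{4}$ Breaker edges at $x$. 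That gives $d_M(x)\le \lceil \frac{4s(n-1)}{b+4}\rceil \le D$ whenever $D$ is not tiny, and $D\to\infty$ here.

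I would therefore aim for the sharper statement $\Delta(M)\le D$ together with a tree-counting bound. The vertex $x$ has at most $D$ neighbours, and every other vertex at distance $j\ge 1$ spawns at most $D-1$ new vertices at distance $j+1$, since one of its edges goes back toward $x$. This gives $|C^{\le i}_x|\le 1+D\sum_{j<i}(D-1)^j \le 1 + D\cdot\frac{(D-1)^i}{D-2}\le 2D^i$ for $D\ge 3$. The main obstacle is exactly this bookkeeping of the degree bound, making sure it is $\le D$ rather than $D+1$. If it only comes out as $D+1$, I would fall back to absorbing the error by replacing $4$ with a slightly smaller constant in the degree estimate, using $b/4 \ge \frac{b+4}{4}\cdot(1-o(1))$. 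The rest is a routine geometric sum.
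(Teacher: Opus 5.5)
Your degree bound $d_M(x)\le \frac{4s(n-1)}{b+4}+1\le D+1$, with $D:=\frac{4sn}{b}$, is correct and is the one the paper uses. The gap is the sharpening $\Delta(M)\le D$ on which you build the induction: it does not follow.

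Your counting, where every Maker edge at $x$ except the last is followed by a full batch of $\frac b4$ Breaker edges at $x$, only gives $d_M(x)<\frac{4s(n-1)}{b+4}+1$. Hence $d_M(x)\le\lceil\frac{4s(n-1)}{b+4}\rceil$, and the first inequality in your chain is fine. The second one, $\lceil\frac{4s(n-1)}{b+4}\rceil\le D$, is false in general. We have $D-\frac{4s(n-1)}{b+4}=\frac{4s(4n+b)}{b(b+4)}$, which is about $\frac1{36s}$ when $k=1$ and $o(1)$ when $k\ge 2$. So as soon as the fractional part of $D$ exceeds this tiny gap, the ceiling equals $\lfloor D\rfloor+1>D$. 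The obstruction is integrality, not the size of $D$.

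Your fallback does not rescue this either. The constant $4$ is fixed by the statement, and the slack $D-\frac{4s(n-1)}{b+4}$ is far too small to absorb the additive $+1$. A genuinely smaller constant would require Breaker to claim more than $\frac b4$ edges per endpoint, which is not his strategy.

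The fix is already in your tree count: you never need $\Delta(M)\le D$. With $\Delta(M)\le D+1$, the root $x$ has at most $D+1$ Maker-neighbours. Every vertex at distance $j\ge 1$ spends at least one of its at most $D+1$ edges on a vertex at distance $j-1$, so it has at most $D$ neighbours at distance $j+1$. Hence $|C^{\le i}_x|\le 1+(D+1)\frac{D^i-1}{D-1}$. This is at most $2D^i$ if and only if $3D^i-2\le D^{i+1}$, which holds for every $i$ once $D\ge 3$; here $D=\frac16 n^{1/(k+1)}\to\infty$.

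The paper instead uses the cruder bound $|C^j_x|\le (1+D)^j$ together with $\sum_{j\le i}(1+D)^j< D^i(1+\frac1D)^{i+1}\le 2D^i$. That last step is valid for $i+1\le D\ln 2$, which covers the bounded $i\le k+1$ actually used. For $i\ge k+2$ one has $2D^i\ge n\ge |C^{\le i}_x|$ trivially. So your worry about large $i$ is legitimate but harmless, and your tree count with $D+1$ at the root handles all $i$ at once without that case split.
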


\begin{proof}
By (i) and (ii) we get that
$
d_M(z)\leq 
1 + \frac{4sn}{b} 
$
for every vertex $z$ throughout the game.
Therefore, inductively we obtain 
$
|C_x^{j}|\leq \left( 1 + \frac{4sn}{b} \right)^j
$
for every $j\geq 1$, which leads to
$$
|C_x^{\leq i}|
= 
\sum_{j=0}^i |C_x^{j}|
\leq 
\sum_{j=0}^i \left( 1 + \frac{4sn}{b} \right)^j
\leq
2 \left(\frac{4sn}{b}\right)^i 
$$
as claimed.
\end{proof}

\begin{claim}
\label{clm:Breaker:conn.bound.number.rounds}
Assume that until a round $r$
Breaker does not forfeit the game, and that
there is a free edge incident
with $C^{\leq i}$ after round $r$, then
$r < 4 \left(\frac{4sn}{b}\right)^{i+1}$.
\end{claim}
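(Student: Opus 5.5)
The plan is to count Breaker's step-(iii) edges. In every round before and including round $r$, point (iii) of the strategy has Breaker claim $\frac{b}{4}$ edges, each being a free edge that meets $C^{j}_v\cup C^{j}_w$ for the smallest possible $j$. The only way the iteration in (iii) can stop early is when no free edge meets any $C^{j}_v\cup C^{j}_w$ at all. Suppose that after round $r$ some free edge is still incident with $C^{\leq i}$. Then in every earlier round, and at every step of (iii), the smallest admissible index $j$ satisfied $j\leq i$, and such an edge always existed, so the iteration never stopped early. It follows that every one of the $r\cdot\frac{b}{4}$ edges claimed in (iii) up to round $r$ intersected $C^{\leq i}$ as it was at the moment of claiming.

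There is one subtlety: the sets $C^{\leq i}$ change over time, since they grow as Maker builds her graph. A vertex at distance $\le i$ at some earlier time stays at distance $\le i$ later, because Maker's graph only grows and so distances only decrease. Hence $C^{\leq i}$ is monotone increasing, and every (iii)-edge claimed up to round $r$ meets the set $C^{\leq i}$ as it stands after round $r$. Since Breaker has not forfeited, Claim~\ref{clm:Breaker:conn.bound.close.vertices} applies to both $v$ and $w$ and gives
$$|C^{\leq i}|\leq |C_v^{\leq i}|+|C_w^{\leq i}|\leq 4\left(\frac{4sn}{b}\right)^i .$$
The number of edges of the board, which is the $s$-fold multigraph, incident with this set is at most $|C^{\leq i}|\cdot s n$. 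We also need one edge still free, so the count is strict:
$$r\cdot\frac{b}{4} < s n\cdot 4\left(\frac{4sn}{b}\right)^i ,$$
which rearranges to $r<\frac{16sn}{b}\left(\frac{4sn}{b}\right)^i = 4\left(\frac{4sn}{b}\right)^{i+1}$, exactly as claimed.

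The main point needing care is the monotonicity argument together with the fact that the index chosen in (iii) never exceeded $i$. This relies on a free edge at $C^{\leq i}$ after round $r$ also having been free (and incident with the then-smaller set, or with some smaller-index set) at earlier times. A free edge $e$ meeting $C^{j}$ after round $r$ with $j\le i$ was free earlier too. At an earlier time its endpoint may not yet have been within distance $i$, but then the iteration still had some candidate at index $\le i$ or beyond. This needs a check: if at an earlier step the smallest $j$ exceeded $i$, then all edges at $C^{\le i}$ (then) were claimed. I would handle this by counting only the edges claimed with index $\le i$ and arguing directly that each (iii)-edge is incident with the final $C^{\le i}$ whenever the final set still has a free edge. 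Failing that, I would simply bound the total by all claimed edges touching the final $C^{\leq i}$, using that $C^{\le i}$ is always nonempty and monotone. The remaining computation is routine.
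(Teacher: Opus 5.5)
Your outline matches the paper's proof: count Breaker's (iii)-edges, use that $C^{\le i}$ only grows, and compare with the at most $4\left(\frac{4sn}{b}\right)^i\cdot sn$ edges touching $C^{\le i}$. However, the step you flag yourself is left open, and it is the heart of the claim. Your first paragraph simply asserts that at every step of (iii), in every round up to $r$, the smallest admissible index was at most $i$. The justification you sketch at the end does not give this. A free edge at the final $C^{\le i}$ was indeed free earlier, but its endpoint may have joined $C^{\le i}$ only later, and you concede that the candidate index could then have been beyond $i$.

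What must be ruled out is this scenario: $C^{\le i}$ had no free incident edge at some earlier moment, so Breaker's (iii)-edges went elsewhere, and later $C^{\le i}$ grew and acquired new free edges. Your fallback, bounding all claimed edges touching the final $C^{\le i}$ via nonemptiness and monotonicity, does not address this. It only gives an upper bound, whereas what is missing is the lower bound $r\cdot\frac{b}{4}$ on the number of such edges.

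The missing idea is that $C^{\le i}$ can only grow through a free edge incident with it. Suppose Maker's edge $xy$ brings a new vertex $z$ into $C^{\le i}_v$. Then a shortest $v$--$z$ path in the new graph uses $xy$, say with $x$ before $y$. Its initial segment up to $x$ lies in the old graph, so $\text{dist}_M(v,x)\le i-1$ before the move. Hence $xy$ was a free edge incident with $C^{\le i}$ just before Maker claimed it, and the same holds for $w$.

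Consequently, once no free edge touches $C^{\le i}$, the set is frozen. Since edges never become free again, no free edge touching $C^{\le i}$ can ever reappear. By contraposition, the free edge at $C^{\le i}$ after round $r$ forces a free edge at $C^{\le i}$ at every earlier moment. This includes the moment before each single step of each (iii)-iteration, since Breaker's own claims do not change distances. So the smallest index in (iii) was always at most $i$, the iteration never stopped early, and all $r\cdot\frac{b}{4}$ of these edges met $C^{\le i}$. With this in place, your monotonicity and counting argument finishes the proof exactly as in the paper.
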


\begin{proof}
We first make the following observation:
If after some round $r'$ there is no
free edge $e$ that is incident with
$C^{\leq i}$,
then Maker cannot further add vertices to the set
$C^{\leq i}$, and hence no such free edge $e$ 
can appear in the following rounds.

Assume now that the claim is wrong,
i.e., we have a round $r \geq 4 \left(\frac{4sn}{b}\right)^{i+1}$ in which there still exists
a free edge incident with 
$C^{\leq i}$.
Then in all previous rounds there 
must have been free edges incident with
$C^{\leq i}$, and hence, by following (iii),
Breaker always claimed $\frac{b}{4}$
edges touching $C^{\leq i}$. Thus,
in total Breaker must have claimed at least
$r \cdot \frac{b}{4} \geq
\left(\frac{4sn}{b}\right)^{i} \cdot 4sn$
edges touching $C^{\leq i}$,
in contradiction to
$|C^{\leq i}|=|C^{\leq i}_v\cup C^{\leq i}_w|\leq 4\left(\frac{4sn}{b}\right)^{i}$.
\end{proof}

Having the above claims in our hand,
we can now show that Breaker never needs more than $\frac{b}{4}$ edges to follow part 
(iv) of the strategy. 

\begin{claim}
\label{clm:Breaker:no.forfeit}
Breaker never forfeits the game.
\end{claim}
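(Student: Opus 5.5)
We argue by induction on the rounds: assuming Breaker has not forfeited so far, we bound the number of free edges between $C^{\leq k}$ and $V_M$ created by Maker's latest edge $xy$. Since Breaker followed (iv) in all earlier rounds, just before Maker's move there are no free edges between $C^{\leq k}$ and $V_M$. So the edges Breaker must claim in (iv) in the current round are exactly those that become relevant because $V_M$ or $C^{\leq k}$ grew when Maker claimed $xy$. Claims~\ref{clm:Breaker:conn.bound.close.vertices} and~\ref{clm:Breaker:conn.bound.number.rounds} apply, because their hypotheses only require that Breaker did not forfeit up to the previous round.

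\textbf{Growth of $V_M$.} The set $V_M$ gains at most the two vertices $x,y$. Between a new vertex $z\in\{x,y\}$ and $C^{\leq k}$ there are at most $s|C^{\leq k}|\leq 4s(4sn/b)^k$ edges, using Claim~\ref{clm:Breaker:conn.bound.close.vertices}, here applied to the sets after Maker's move.

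\textbf{Growth of $C^{\leq k}$.} Suppose some vertex enters $C^{\leq k}$, or its distance to $v$ or $w$ decreases within the range $\leq k$. Then the edge $xy$ must join a vertex $x\in C^{\leq k-1}$ to some $y$. If $y$ had already been in $V_M$, then $xy$ would have been a free edge between $C^{\leq k}$ and $V_M$, which Breaker would already have claimed by (iv). Hence $y$ is a fresh vertex, so its only Maker edge is $xy$. Consequently $y$ is the only vertex entering $C^{\leq k}$; no other distances change, since paths through $y$ cannot shorten anything. The new edges to handle are those between $y$ and $V_M$, at most $s|V_M|$ of them.

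To bound $|V_M|$ we use Claim~\ref{clm:Breaker:conn.bound.number.rounds} with $i=k-1$. Just before round $r$, the free edge $xy$ was incident with $C^{\leq k-1}$, which gives $r< 4(4sn/b)^{k}$. Therefore $|V_M|\leq 2r\leq 8(4sn/b)^k$.

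\textbf{Arithmetic.} Altogether, (iv) requires at most
$$2\cdot 4s\left(\frac{4sn}{b}\right)^k + 8s\left(\frac{4sn}{b}\right)^k = 16 s \left(\frac{4s}{C}\right)^k n^{\frac{k}{k+1}}$$
edges, using $b=Cn^{k/(k+1)}$ and $n^k/n^{k^2/(k+1)}=n^{k/(k+1)}$. With $C=24s$ and $k\geq 1$ (as $s\geq 3$), this is at most $16s\cdot\frac{1}{6}\, n^{k/(k+1)}\leq 6s\, n^{k/(k+1)}=\frac{b}{4}$, so Breaker never forfeits.

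\textbf{Main obstacle.} The key step is the structural observation that $C^{\leq k}$ can only grow by a vertex $y$ that is brand new to $V_M$. Without it, a single Maker edge could pull many existing Maker vertices into $C^{\leq k}$, and the simple counting above would fail. The other delicate point is the indexing in Claim~\ref{clm:Breaker:conn.bound.number.rounds}: we must apply it to the state just before the current move, with $i=k-1$, and this is what bounds $|V_M|$ by $O((4sn/b)^k)$ rather than by something larger.
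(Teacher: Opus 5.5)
Your proof is correct and follows essentially the same route as the paper's. You use the same ingredients: any growth of $C^{\leq k}$ forces $x\in C^{\leq k-1}$ and a fresh $y\notin V_M$; Claim~\ref{clm:Breaker:conn.bound.close.vertices} bounds $|C^{\leq k}|$; and Claim~\ref{clm:Breaker:conn.bound.number.rounds} with $i=k-1$ bounds $|V_M|$. You combine them by summing the two sources of new pairs instead of the paper's case split, you keep the multiplicity factor $s$ that the paper silently drops, and the one nit is that applying Claim~\ref{clm:Breaker:conn.bound.number.rounds} after round $r-1$ gives $r-1<4(4sn/b)^k$, hence $|V_M|\leq 8(4sn/b)^k+2$, which your arithmetic slack easily absorbs.
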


\begin{proof}
Assume that until including round $r-1$,
Breaker could always follow the strategy without forfeiting. Assume that in round $r$
Maker claims an edge $xy$.
We want to show that in round $r$, Breaker needs to claim at most $\frac{b}{4}$ free edges when following (iv). 
We consider two cases:
\begin{itemize}
\item Case 1: $x,y\notin C^{\leq k}$ before Maker's move. Then the set $C^{\leq k}$ does not change when Maker claims $xy$. Therefore, to follow part (iv) of the strategy, Breaker only needs to claim free edges between $\{x,y\}$ and $C^{\leq k}$. Due to Claim~\ref{clm:Breaker:conn.bound.close.vertices}, the number of such edges can be upper bounded by
$$
2|C^{\leq k}_v\cup C^{\leq k}_w|
\leq 
8 \left(\frac{4sn}{b}\right)^{k}
<
\frac{b}{4}
$$
where the last inequality holds by the choice of $b$.
\item Case 2: at least one of $\{x,y\}$ is in $C^{\leq k}$ before Maker's move. By symmetry, we may assume that
$x\in C_v^{\leq k}$. Then, since Breaker 
followed part (iv) of the strategy so far,
we have $y\notin V_M$ before Maker's move.
If $y\notin C^{\leq k}_v$ after Maker' move, in order to follow (iv) Breaker only needs to claim all free edges between
$y$ and $C^{\leq k}$. Their number is at most
$$
|C^{\leq k}_v\cup C^{\leq k}_w|
\leq 
4 \left(\frac{4sn}{b}\right)^{k}
<
\frac{b}{4} .
$$
If instead $y\in C^{\leq k}_v$ holds after Maker claims $xy$, then Breaker only needs to claim all free edges between $y$ and the vertices of $V_M$. In this case $x\in C^{\leq k-1}_v$ must hold before Maker's move, and hence $xy$ was a free edge at a vertex in $C^{\leq k-1}_v$. 
By Claim~\ref{clm:Breaker:conn.bound.number.rounds}
this can only happen in the first 
$4 \left(\frac{4sn}{b}\right)^{k}$ rounds, giving that
$$
|V_M|\leq 8 \left(\frac{4sn}{b}\right)^{k} < \frac {b}{4} ,
$$
i.e., Breaker needs to claim at most $\frac{b}{4}$ edges for (iv). \hfill $\qedhere$
\end{itemize}
\end{proof}

It remains to show that whenever Maker 
creates a path between $v$ and $w$,
its length is more than $s$.
So, consider any round $r$ and let $xy$ be the edge that Maker claims in that round.
Assume that this edge closes a path $P$ between $v$ and $w$, and w.l.o.g.~say that $x$ comes before $y$ when going from $v$ to $w$ on that path. Then $x,y\in V_M$ was true before round $r$. Hence, since $xy$ is free at the beginning of round $r$ and Breaker followed part (iv) of the strategy in the first $r-1$ rounds,
we must have $x,y\notin C^{\leq k}$
before Maker claims $xy$.
But this gives $\text{dist}_M(v,x)\geq k+1$
and $\text{dist}_M(y,w)\geq k+1$;
hence the path $P$ has length at least
$2k+3>s$. This completes the proof of the theorem.
\end{proof}

\bigskip

\subsection{Proof of Theorem~\ref{thm:rainbow-conn.s.constant}(b)}

\begin{proof}[Proof of Theorem~\ref{thm:rainbow-conn.s.constant}(b)]
Let $S$ be a set of $s$ colors,
let $k= \left\lfloor \frac{s-1}{2} \right\rfloor$, 
and let $t(s) = 1-1/\lceil \frac{s}{2} \rceil = \frac{k}{k+1}$. 
Further, choose $\delta = 10^{-80s}s^{-20}$
and $\gamma = 10^{-4s}\delta$.
Whenever necessary, we assume that $n$ is large enough.
Let $G =(V,E)$ be the multigraph on which the game is played, consisting of $s$ edge-disjoint copies $G_c$ of $K_n$, $c\in S$, on the same vertex set $V$. 
Hence, each graph $G_c$ is identified with a distinct color $c\in S$, and for short, we sometimes
call it the layer $c$.
By bias monotonicity assume that Maker plays against a bias $b = \gamma n^{t(s)}$.  
We set $p = n^{-t(s)}$ and $\alpha_1 = \frac{1}{50 b}$ and $\alpha_2 = 2\alpha_1$.

\medskip

To introduce the idea behind Maker's strategy, 
we fix a pair of vertices $v, w \in V$ and a tuple $C = (c_1, \dots, c_{2k+1}) \in S^{2k+1}$. 
The goal is to connect $v$ and $w$ via a path of length $2k+1$ 
that uses edges of the colors $C$ in the given order.
We will ensure that Maker's graph expands $k$ steps from $v$ 
using edges of colors $c_1, \dots, c_k$ in that order
and expands $k$ steps from $w$ 
using edges of colors $c_{2k+1}, c_{2k}, \dots, c_{k + 2}$, again in that order.
The two sets reached by this expansion are then connected via final edges in color $c_{k + 1}$, finishing the path. 

In order to ensure that the two expanding sides never overlap, 
we partition the vertex set into $V = V_L \cup V_R$ 
with $\lvert V_L \rvert = \lvert V_R \rvert = \frac n 2$.
Regardless to which halves choices of $v$ and $w$ belong themselves, we will use expansion of $v$ into $V_L$ and  expansion of $w$ into $V_R$.
This expansion property of every vertex into both halves is guaranteed by the edges that Maker claims within a random multigraph $\Gamma_1$ generated during the game, 
while the connection between the expanded sets uses Maker's edges of a second random multigraph $\Gamma_2$.
Both random graphs can be split into their individual layers, 
where $\Gamma_{1, c}$ (respectively $\Gamma_{2, c}$) 
denotes the subgraph using exactly the edges of $\Gamma_1$ (respectively $\Gamma_2$) 
in color $c$.
Maker generates these multigraphs during the game and guarantees 
to claim enough of their edges to a.a.s.~guarantee the properties sketched above 
on the Maker submultigraphs $\Gamma_1 \cap M$ and $\Gamma_2 \cap M$.

The random graph $\Gamma_{1,c}$ is generated 
while playing a degree game in the rough style of~\cite{krivelevich2011critical}:
At every vertex $v\in V$ and for every layer $c \in S$, 
two vertex sets are selected, both of size $\alpha_2 \frac n 2$. 
The first, call it $\NeighsOutL{\Gamma_1}{c}{v}$, 
is chosen u.a.r.~as subset of $V_L \setminus \{v\}$. 
The second, call it $\NeighsOutR{\Gamma_1}{c}{v}$, 
is chosen  u.a.r. as subset of $V_R \setminus \{v\}$.
All edges in color $c$ from $v$ to a vertex in $\NeighsOutL{\Gamma_1}{c}{v} \cup \NeighsOutR{\Gamma_1}{c}{v}$ 
are added to $\Gamma_{1,c}$. 
For our later analysis,
we imagine these edges to be directed  
and pointing out from $v$
and we say that they are \emph{exposed} at vertex $v$. 
Note that it may happen that an edge $xy$ in $\Gamma_{1,c}$ gets exposed at both vertices $x$ and $y$; then we still add $xy$ to $\Gamma_{1,c}$ only once, but give it both possible directions.

The second random graph $\Gamma_{2,c}$ is generated
while playing a degree game similarly to~\cite{ferber2015generating} and a balancing game in the style of~\cite{allen2017making}, and it is going to have distribution $G_{n,p}$, i.e., every edge appears independently with probability $p$.
For both these random graphs $\Gamma_{1,c}$ and
$\Gamma_{2,c}$,
Maker exposes the edges while the game proceeds, in an order determined by her strategy, and whenever possible, i.e., when an edge that is revealed to be in $\Gamma_{1,c}$ or $\Gamma_{2,c}$ is still free, she claims that edge. Moreover, if an edge of layer $c$ is revealed not to be in $\Gamma_{2,c}$ we add it to a set $\text{Sp}$ (of spooky edges that the Ghost haunts) which is initially empty.

\medskip

Before describing the strategy in detail, we introduce some more notation: For each vertex $v$ and every layer $c$, we denote the set of neighbors of $v$ in layer $c$ as $\Neighs_c(v)$, 
and for any $H\subseteq G$, we let $\Neighs_{H,c}(v)$ be the neighbors of $v$ in $H$ that belong to layer $c$.
Accordingly, we define
degrees $\Degr_c(v)=\abs{\Neighs_c(v)}$
and
$\Degr_{H,c}(v)=\abs{\Neighs_{H, c}(v)}$.
The following definitions are done once 
for each of the two vertex partition classes $V_L$ and $V_R$. 
Thus, let $D \in \{L, R\}$.
The subset of $\NeighsKOut{D+}{\Gamma_1}{c}{v}$
containing the vertices $w$, such that the edge $vw$ of color $c$ belongs to Maker's subgraph of $\Gamma_{1, c}$, is called $\NeighsKOut{D+}{M}{c}{v}$.
We set
$\NeighsKOut{D-}{\Gamma_1}{c}{v} =\{w\in V: v \in \NeighsKOut{D+}{\Gamma_{1}}{c}{w}\}$.
The corresponding set in Maker's subgraph is defined as 
$\NeighsKOut{D-}{M}{c}{v} = \{w\in V: v \in \NeighsKOut{D+}{M}{c}{w}\}$.
Accordingly,
we define
$\DegrKOut{D+}{M}{c}{v} = \abs{\NeighsKOut{D+}{M}{c}{v}}$,
$\DegrKOut{D-}{M}{c}{v} = \abs{\NeighsKOut{D-}{M}{c}{v}}$,
$\DegrKOut{D+}{\Gamma_1}{c}{v} = \abs{\NeighsKOut{D+}{\Gamma_1}{c}{v}}$ and
$\DegrKOut{D-}{\Gamma_1}{c}{v} = \abs{\NeighsKOut{D-}{\Gamma_1}{c}{v}}$.
Moreover, for a set $A\subset V$, we define 
$\NeighsKOut{D+}{M}{c}{A} = \bigcup_{v\in A} \NeighsKOut{D+}{M}{c}{v}$ and $\DegrKOut{D+}{M}{c}{A} = \abs{\NeighsKOut{D+}{M}{c}{A}}$.

Let $\Phi = S^{2k+1}$. 
For $C \in \Phi$ with $C = (c_1, \dots, c_{2k+1})$, we write $C_L = (c_1, \dots, c_k)$
and $C_R = (c_{2k+1}, c_{2k}, \dots, c_{k + 2})$.
We will often refer to $C$ as $(C_L, c, C_R)$, where $c = c_{k + 1}$.
Note that $\abs{\Phi} = s^{2k+1}\leq s^s$.
For a vertex $v$ and a sequence of colors $C=(c_1,\ldots,c_t) \in S^t$, let 
$\NeighsKOut{D+}{\Gamma_1}{C}{v}$ be the set of all vertices $v_{t}$, that can be reached from $v$ by a path $ve_1v_1e_2v_2...e_{t} v_{t}$ where for each $i\in [t]$, the edge $e_i$ has the color $c_i$ and is oriented towards the vertex $v_i \in V_D$ in the orientation imagined on $\Gamma_{1,c_i}$. Similarly, let
$\NeighsKOut{D+}{M}{C}{v}$ be the subset of all such vertices $v_{t}$ where the described path $ve_1v_1e_2v_2...e_{t} v_{t}$ is such that $e_i$ is claimed by Maker for every $i\in [t]$.
Again, let $\DegrKOut{D+}{M}{C}{v} = |\NeighsKOut{D+}{M}{C}{v}|$.

Finally, whenever we define an operator only considering the edges of Maker, we take the freedom to replace the corresponding $M$ with a $B$ to only consider the edges of Breaker, e.g. $\NeighsKOut{L}{B}{c}{v}$ denotes the subset of $V_L$ that is adjacent to $v$ in color $c$ in Breaker's graph.

\bigskip

\textbf{Strategy:}
In her strategy, Maker plays three different subgames in parallel, leading to three different types of moves, where moves of type $i\in \{1,2,3\}$ are always performed in rounds 
$r \equiv i$ (mod $3$). Since we do not control Breaker's moves in between two moves of the same type, we just assume that Maker plays these subgames against bias $3b$. We start by introducing these three subgames. In the strategy discussion we then show that Maker can a.a.s.~ensure to reach all her goals from these different subgames, and building on that, we prove that Maker wins a.a.s. The subgames 
are as follows:

\medskip\medskip

\begin{minipage}{30pt}
~~~~ 
\end{minipage}
\begin{minipage}{0.9\textwidth}
\begin{itemize}
    \item[Type 1:] minimum degree game w.r.t.~the edges of $\Gamma_1$
    \item[Type 2:] minimum degree game w.r.t.~the edges of $\Gamma_2$
    \item[Type 3:] balancing game w.r.t.~the edges of $\Gamma_2$
\end{itemize}
\end{minipage}

\medskip\medskip

\noindent
\underline{Type 1:} While the game proceeds,
Maker imagines to play the 
$\left(2sn, \frac n 2, \alpha_1, 6b\right)$-MinBox game
in parallel. Here, for each color $c\in S$ and for each vertex $v\in V$, we create two boxes $B_{v, c}^L$ and $B_{v, c}^R$ of size $\frac n 2$, and Maker wins the game
if she claims at least $\alpha_1 \frac n 2 = \frac{n}{100 b}$ elements in each box.

\smallskip

In the actual game on $G$, a move of type 1 is done as follows. At first Maker updates the simulated $\left(2sn, \frac n 2, \alpha_1, 6b\right)$-MinBox game: for each edge $xy$ that Breaker claimed in some layer $c$ since Maker's last move of type 1, we assume that Breaker claimed an element of the box $B_{x, c}^{D_1}$, 
where $D_1 \in \{L, R\}$ is such that $y \in V_{D_1}$, 
and an element of the box $B_{y, c}^{D_2}$, 
where $D_2 \in \{L, R\}$ is such that $x \in V_{D_2}$. 

Afterwards, in the simulated MinBox game,
Maker follows the strategy $\mathcal{S}_1$ from 
Lemma~\ref{thm:MinBox} (see also Remark~\ref{rem:MinBox}) and accordingly she claims an element of a box $B_{v, c}^{D}$ for some vertex $v$, some color $c$ and some $D \in \{L, R\}$. 
Having $v$, $c$ and $D$ fixed this way, in the actual game on $G$,
Maker chooses u.a.r.~an edge $vw$ to a vertex $w \in V_D$ that is incident to $v$ in layer $c$ and was not exposed at the vertex $v$ yet. 
Maker adds the edge $vw$ to $\Gamma_{1,c}$ and imagines it being directed from $v$ to $w$
from now on. The following cases may happen: 
\begin{itemize}
\item If this random edge leads 
$\Gamma_{1,c}$ to have more than $\alpha_2 \frac n 2$ outgoing edges at $v$ into $V_{D}$, Maker forfeits the game. 

\item Otherwise, if Breaker has not claimed the edge $vw$ in layer $c$ yet, Maker claims it. (It may happen that $vw$ already belongs to Maker's graph due to another move of type 1,~2 or 3. Then Maker simply imagines to claim the edge again.)

\item Otherwise, if Breaker has claimed the edge $vw$ already, Maker repeats the process with another random edge at $v$ in layer $c$ into $V_D$.
\end{itemize}

If immediately after Maker's move we have $\DegrKOut{D+}{M}{c}{v} = \alpha_1 \frac n 2$ (i.e., in the simulated MinBox game, the box $B_{v, c}^{D}$ is not active from now on),
she exposes further edges from $v$ into $V_{D}$ in layer $c$
as follows. Let $r\leq \alpha_2 \frac n 2$ be the number of edges from $v$ into $V_{D}$ which were added to $\Gamma_{1,c}$ so far, then Maker chooses u.a.r.~$\alpha_2 \frac n 2 -r$ edges from $v$ into $V_{D}$ in layer $c$ among those edges which were not exposed at $v$ yet. She adds all these edges to $\Gamma_{1,c}$ (but she does not claim these edges).

\medskip

\underline{Type 2:} 
While the game proceeds,
Maker imagines to play the 
$(1,3b,E(G),sn,\ell_2,M_2)$-SBG
in parallel, where
$$
M_2 = n-1 ~~ \text{and} ~~
\ell_2 = \delta pn .
$$
Here, for each color $c\in S$ and for each vertex $v\in V$, we have a hyperedge $F_v^c$ of size $n-1$ 
that consists of the edges incident
with $v$ in layer $c$.  

\smallskip

In the actual game on $G$, a move of type 2 is done as follows. At first Maker updates the simulated $(1,3b,E(G),sn,\ell_2,M_2)$-SBG: for each edge that Breaker claimed since the last Maker's move of type 2, we assume that Breaker claimed the same edge in the Spooky Balancing Game, except if this edge belongs to $\text{Sp}$ (and hence belongs to Ghost in the SBG).
Afterwards, in this simulated SBG,
Maker follows the strategy $\mathcal{S}_2$ from 
Theorem~\ref{thm:SBG} 
(we show in Claim~\ref{clm:rainbow-conn.SBGcheck} that $\mathcal{S}_2$ is a winning strategy)
and accordingly she chooses an edge $xy$ of $G$; say this edge 
belongs to layer $c$ for some $c\in S$. Having $xy$ 
fixed this way, Maker does the following in the actual game on $G$.
\begin{itemize}
\item If Maker already tossed a coin on the edge $xy$ in layer $c$
in a previous round (due to a move of type 3), then two cases may happen:
\begin{enumerate}
\item[(a)] If the coin toss was a success,
she imagines claiming the edge $xy$ in layer $c$ in the current round. Accordingly, Maker also claims the edge in the simulated SBG.
\item[(b)] If the coin toss was not a success, Maker does not claim the edge $xy$ in layer $c$. Accordingly, in the simulated SBG the edge is haunted by Ghost. Then Maker 
repeats the process with 
choosing the next edge proposed by the strategy $\mathcal{S}_2$, until she finally claims an edge in this round.
\end{enumerate}
\item If Maker did not already toss a coin on the edge $xy$ in layer $c$
in a previous round, she now tosses a coin 
with $p$ being the probability of success. Again two cases may happen:
\begin{enumerate}
\item[(a)] If the coin toss is a success,
Maker claims the edge $xy$ in layer $c$,
and she adds $xy$ to the random graph $\Gamma_{2,c}$. Accordingly, Maker claims the edge in the simulated SBG.
(It may happen that $xy$ already belongs to Maker's graph, e.g., due to a move of type 1. Then Maker simply imagines to claim the edge again.)
\item[(b)] If the coin toss is not a success, Maker does not claim the edge,
and it is revealed not to be part of $\Gamma_{2,c}$. To remember this fact,
we add the edge to the set $\text{Sp}$. Accordingly, in the simulated SBG the edge is haunted by Ghost. Then Maker repeats the whole process with 
choosing the next edge proposed by the strategy $\mathcal{S}_2$, until she finally claims an edge in this round.
\end{enumerate}
\end{itemize}

\medskip

\underline{Type 3:} 
While the game proceeds,
Maker imagines to play the 
$(1,3b,E(G),s^{2k+1} n^2,\ell_3,M_3)$-SBG
in parallel,
where
$$
M_3 = \left( \frac{p n}{100\gamma} \right)^{2 k}
~~ \text{and} ~~
\ell_3 = \delta p\left(p n\right)^{2 k} .
$$
Here, for each choice of $v,w\in V$ 
and $(C_L,c,C_R)\in \Phi$,
we have a winning set $B_c(v,w,C_L, C_R)$,
which is empty at the beginning
and to which Ghost may add
free edges of layer $c$ 
that turn out to lie between
$\NeighsOutL{M}{C_L}{v}$ and
$\NeighsOutR{M}{C_R}{w}$ when the two sets 
increase.

\smallskip

In the actual game on $G$, a move of type 3 is then done similarly to moves of type 2. Maker updates the simulated $(1,3b,E(G),s^{2k+1} n^2,\ell_3,M_3)$-SBG: 
at first, for each edge that Breaker claimed since the last Maker's move of type 3, we assume that Breaker claimed the same edge in the Spooky Balancing Game, except if the edge belongs to $\text{Sp}$.
Secondly, if for some $v,w\in V$ and $(C_L,c,C_R)\in \Phi$, it happens that
$\NeighsOutL{M}{C_1}{v}$ or $\NeighsOutR{M}{C_2}{w}$
increased in the meantime,
Ghost adds all free edges of layer $c$ that suddenly lie between
$\NeighsOutL{M}{C_L}{v}$ and $\NeighsOutR{M}{C_R}{w}$
to the set $B_c(v,w,C_L,C_R)$.
If this causes $\abs{B_c(v,w,C_L,C_R)} > M_3$, Maker forfeits the game.

Afterwards, in this simulated SBG,
Maker follows the strategy $\mathcal{S}_3$ from 
Theorem~\ref{thm:SBG} 
(we show in Claim~\ref{clm:rainbow-conn.SBGcheck} that $\mathcal{S}_3$ is a winning strategy)
and accordingly she chooses an edge $xy$ of $G$; say this edge 
belongs to layer $c$ for some $c\in S$. Then Maker does the same case distinction and coin toss as in moves of type 2,
where this time she uses
the $(1,3b,E(G),s^{2k+1} n^2,\ell_3,M_3)$-SBG
together with the strategy $\mathcal{S}_3$.

\bigskip

\underline{End of game:} Maker stops playing the game, when each edge in $G$
belongs to Maker, or belongs to Breaker,
or was revealed to be neither in $\Gamma_1$ nor in $\Gamma_2$. For every edge $xy$ belonging to Breaker in some layer $c$ which was not exposed by type 2 or 3 so far (and hence does not belong to $\text{Sp}$), Maker finally tosses a coin independently and with success probability $p$, 
and in case of success, she adds
$xy$ to $\Gamma_{2,c}$.

\bigskip

\textbf{Strategy discussion:}
Whenever needed, assume that $n$ is large enough. We start by showing that the parameters for our Spooky Balancing Games are chosen in such a way that Theorem~\ref{thm:SBG} can be applied.

\begin{claim}\label{clm:rainbow-conn.SBGcheck}
Maker wins the Spooky Balancing Games she plays in moves of type 2 and 3.
\end{claim}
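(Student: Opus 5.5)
The plan is a direct verification of the two hypotheses of Theorem~\ref{thm:SBG} for each of the two simulated games. In both games Maker claims $m=1$ element per round against bias $3b$. Here $b=\gamma n^{t(s)}$ with $t(s)=\frac{k}{k+1}$, and $k=\lfloor\frac{s-1}{2}\rfloor\geq 1$ since $s\geq 3$. Also $p=n^{-t(s)}$, so $pn=n^{1/(k+1)}$. Throughout I will use the elementary bound $\frac{mb'}{m+b'}=\frac{3b}{1+3b}<1$ with $b'=3b$. Hence condition (2) of Theorem~\ref{thm:SBG} will follow from
\[
\ell \geq 5\sqrt{\frac{M\log(h)}{3b}} ,
\]
and condition (1) from $M\geq 9(1+3b)\log(h)$, for which $M\geq 40b\log(h)$ suffices. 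Since $s$ is a constant, in both games $\log(h)=O(\log(n))$. The game-specific constraint that hyperedges never exceed size $M$ is not part of this claim: for type 2 it holds trivially because $|F_v^c|=n-1=M_2$, and for type 3 it is enforced by Maker's forfeit rule. So only the numerical inequalities need checking.

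For the type 2 game we have $h=sn$, $M_2=n-1$ and $\ell_2=\delta pn=\delta n^{1/(k+1)}$.
\begin{itemize}
\item Condition (1) reads $n-1\geq 9(1+3\gamma n^{k/(k+1)})\log(sn)$. This holds for large $n$ because $k/(k+1)<1$.
\item For condition (2), compute $\frac{M_2\log(h)}{3b}\leq \frac{n\log(sn)}{3\gamma n^{k/(k+1)}} = \frac{n^{1/(k+1)}\log(sn)}{3\gamma}$. It therefore suffices that $\delta n^{1/(k+1)}\geq 5\sqrt{n^{1/(k+1)}\log(sn)/(3\gamma)}$, that is, $n^{1/(2(k+1))}\geq \frac{5}{\delta}\sqrt{\log(sn)/(3\gamma)}$. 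This is true for large $n$, since $\delta,\gamma,k$ depend only on $s$.
\end{itemize}

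For the type 3 game we have $h=s^{2k+1}n^2$, $M_3=(pn/(100\gamma))^{2k}=(100\gamma)^{-2k}n^{2k/(k+1)}$ and $\ell_3=\delta p(pn)^{2k}=\delta n^{k/(k+1)}$.
\begin{itemize}
\item Condition (1) compares $n^{2k/(k+1)}$ with $O(n^{k/(k+1)}\log(n))$. It holds for large $n$ because $k\geq 1$ makes the exponent $\frac{2k}{k+1}$ strictly larger than $\frac{k}{k+1}$.
\item For condition (2), the right-hand side is at most $5\sqrt{(100\gamma)^{-2k}n^{2k/(k+1)}\log(h)/(3\gamma n^{k/(k+1)})} = O\big(n^{k/(2(k+1))}\sqrt{\log(n)}\big)$, with the constant depending only on $s$. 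The left-hand side is $\delta n^{k/(k+1)}$. The ratio of left to right grows like $n^{k/(2(k+1))}/\sqrt{\log(n)}\to\infty$, so condition (2) holds for large $n$.
\end{itemize}

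Theorem~\ref{thm:SBG} then gives that $\mathcal{S}_2$ and $\mathcal{S}_3$ are winning strategies. I do not expect a genuine obstacle. The only point needing care is that type 3 relies on $k\geq 1$, i.e.\ $s\geq 3$; for $k=0$ the two exponents would coincide and condition (1) would fail. Apart from that, all constants ($\gamma$, $\delta$, $s^{2k+1}$, the factor $100^{2k}$) are fixed independently of $n$ and are absorbed by the polynomial gaps.
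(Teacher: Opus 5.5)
Your proposal is correct and matches the paper's proof: both directly verify inequalities (1) and (2) of Theorem~\ref{thm:SBG} for the two simulated games, using $M_2=\Theta(n)$ and $\ell_2=\delta n^{1/(k+1)}$ for type 2, and $M_3=\Theta(n^{2k/(k+1)})=\Theta(b^2)$ and $\ell_3=\delta n^{k/(k+1)}$ for type 3. The only difference is presentation: you make the constants explicit via $\frac{3b}{1+3b}<1$, whereas the paper works with $O(\cdot)$ and $o(\cdot)$ estimates.
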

\begin{proof}
To prove the claim, we need to check the two inequalities (1) and (2) of Theorem \ref{thm:SBG}. 
We start with the strategy $\mathcal{S}_2$, so with the $(1, 3b, E(G), sn, \ell_2, M_2)$-SBG. Inequality (1) holds in that case since $M_2 = \Theta(n)$ and $9(1+3b)\log(sn) = o(n)$. Inequality (2) holds too, since
$$ \frac{15b}{1 + 3b} \sqrt{\frac{M_2 \log(sn)}{1 + 3b}} = O\!\left(\sqrt\frac{n\log(n)}{n^{t(s)}}\right) = O\!\left(n^\frac{1}{2(k+1)} \sqrt{\log(n)}\right) = o\left(n^\frac{1}{k+1}\right) = o(\ell_2).$$
Now, we look at the strategy $\mathcal{S}_3$, hence at the $(1,3b,E(G),s^{2k+1} n^2,\ell_3,M_3)$-SBG. It holds that $M_3 = \Theta\left(n^{2k/(k+1)}\right) = \Theta\left(b^2\right)$, while $9(1 + 3b)\log(s^{2k+1} n^2) = O(b\log(n))$, which proves inequality~(1). Inequality (2) holds as well, since
$$\frac{15b}{1 + 3b}\sqrt{\frac{M_3\log(s^{2k+1} n^2)}{1 + 3b}} =O\left(\sqrt{p(pn)^{2k}\log(n)}\right) = o\left(p(pn)^{2k}\right) = o\left(\ell_3\right).$$
Thus, the claim holds for large enough $n$.
\end{proof}

Next, we want to show that Maker a.a.s.~does not forfeit the game. For this, we first prove that, when following the moves of type 1, Maker never runs out of free edges.

\begin{claim}\label{clm:rainbow-conn.Breaker-degree}
    As long as Maker does not forfeit: The moves of type 1 ensure that for every vertex $v$, every color $c$ and every $D \in \{L, R\},$ 
    as long as $\DegrKOut{D+}{M}{c}{v} < \frac{n}{100b}$ holds, the Breaker-degree is bounded by $\DegrKOut{D}{B}{c}{v} < 0.14 \frac n 2$. 
\end{claim}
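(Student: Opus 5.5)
The plan is to read the bound directly off Theorem~\ref{thm:MinBox}, applied to the simulated $\left(2sn,\frac n2,\alpha_1,6b\right)$-MinBox game that Maker plays in her moves of type~1. The key step is to check that this simulation faithfully dominates the real quantities. On Breaker's side, we need that $\DegrKOut{D}{B}{c}{v}$ is at most the number of Breaker elements $w_B(B^D_{v,c})$ in the box $B^D_{v,c}$. On Maker's side, we need that $w_M(B^D_{v,c})$ equals $\DegrKOut{D+}{M}{c}{v}$, so that "active box" coincides with $\DegrKOut{D+}{M}{c}{v}<\alpha_1\frac n2=\frac{n}{100b}$.

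\emph{Step 1 (the bias $6b$ is legitimate).} Between two consecutive moves of type~1 there are exactly three Breaker moves, so at most $3b$ new Breaker edges. By the update rule, each Breaker edge $xy$ in layer $c$ is recorded as one element in $B^{D_1}_{x,c}$ and one in $B^{D_2}_{y,c}$. Hence at most $6b$ simulated Breaker elements appear per simulated round, and every Breaker edge from $v$ into $V_D$ in layer $c$ is counted in $B^D_{v,c}$. Thus $\DegrKOut{D}{B}{c}{v}\le w_B(B^D_{v,c})$. If fewer than $6b$ elements are recorded, we may pad Breaker's simulated move arbitrarily; this only helps the bound.

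\emph{Step 2 (Maker's simulated moves correspond to real out-edges).} Whenever $\mathcal{S}_1$ tells Maker to claim an element of $B^D_{v,c}$, she keeps exposing random edges from $v$ into $V_D$ in layer $c$ until she hits one not claimed by Breaker, and then makes it an outgoing Maker edge at $v$. We are assuming Maker does not forfeit, and a Breaker-free unexposed edge exists because the box is active and Breaker has less than half of it (this is the bound being proved, used inductively). So each simulated Maker element corresponds to exactly one new vertex of $\NeighsKOut{D+}{M}{c}{v}$. Edges Maker gets through moves of type~2 or~3 are not out-edges in $\Gamma_1$, so they do not enter the count. Consequently $w_M(B^D_{v,c})=\DegrKOut{D+}{M}{c}{v}$ throughout.

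\emph{Step 3 (apply Theorem~\ref{thm:MinBox}).} For every active box we obtain
$$
\DegrKOut{D}{B}{c}{v}\le w_B(B^D_{v,c})\le 6b\left(\frac{n}{100b}+\log(2sn)+1\right)=0.06n+O(b\log n).
$$
Since $b=\gamma n^{t(s)}$ with $t(s)<1$, we have $b\log n=o(n)$. Hence the right-hand side is less than $0.07n=0.14\cdot\frac n2$ for large $n$.

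The main obstacle is the bookkeeping in Steps~1 and~2, namely ensuring that the abstract box elements match real edges. In particular, Maker's re-exposures after hitting Breaker edges must not be counted twice, and Breaker edges already present in a box must not be double counted. The inductive use of the bound also guarantees that free edges remain available, so Maker's type~1 move can always be executed.
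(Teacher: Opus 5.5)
Your proposal is correct and follows the paper's proof: both apply Theorem~\ref{thm:MinBox} to the simulated $\left(2sn,\frac n2,\alpha_1,6b\right)$-MinBox game. Both identify $w_M(B^D_{v,c})$ with $\DegrKOut{D+}{M}{c}{v}$ (so the box is active) and bound $\DegrKOut{D}{B}{c}{v}$ by $w_B(B^D_{v,c})$, then conclude from $6b\left(\frac{n}{100b}+\log(2sn)+1\right)<0.07n$; your Steps~1--2 just spell out the bookkeeping the paper takes for granted. One detail that neither you nor the paper addresses: between two type-1 updates, up to $3b$ Breaker edges at $v$ are not yet recorded in the simulation, but this is absorbed by the slack since $b=o(n)$.
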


\begin{proof}
If $\DegrKOut{D+}{M}{c}{v} < \frac{n}{100b}$,
then in the simulated MinBox game,
Maker's number of elements in the box $B_{v, c}^D$ is $w_M(B_{v, c}^D) = \DegrKOut{D+}{M}{c}{v} < \alpha_1 \frac n 2 = \frac{n}{100 b}$, and hence the box $B_{v, c}^D$ is active.
By the way the MinBox game is updated in moves of type 1, we have that Breaker's number
of elements in this simulated game satisfies $w_B(B_{v, c}^D) = \DegrKOut{D}{B}{c}{v}$. Therefore, Theorem~\ref{thm:MinBox} implies
$$
\DegrKOut{D}{B}{c}{v}
\leq 6b (\DegrKOut{D+}{M}{c}{v} + \log(2 s n)+1) < 0.14 \frac n 2
$$
for large enough $n$, where the last inequality uses the assumed bound on
$\DegrKOut{D+}{M}{c}{v}$.
\end{proof}

Hence, as long as Maker needs to expose and claim edges of $\Gamma_{1,c}$ at a vertex $v$
into a set $V_D$,
we can assume that at least $0.86 \frac{n}{2}$ edges incident with $v$ in layer $c$ into $V_D$ are not claimed by Breaker.
We will make use of this observation many times when bounding probabilities that failures happen or certain bad situations (to be described later) occur too often. 

\begin{claim}\label{clm:rainbow-conn.no.forfeit}
    A.a.s. Maker does not forfeit the game.
\end{claim}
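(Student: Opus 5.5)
The statement concerns the two places where the strategy can forfeit, so I treat them separately and finish with a union bound. Forfeit~(I) happens in a move of type~1 when $\Gamma_{1,c}$ gets more than $\alpha_2\frac n2$ outgoing edges at some $v$ into some $V_D$. Forfeit~(II) happens in a move of type~3 when some winning set $B_c(v,w,C_L,C_R)$ exceeds size $M_3$. The SBGs themselves cause no trouble: by Claim~\ref{clm:rainbow-conn.SBGcheck} the strategies $\mathcal{S}_2,\mathcal{S}_3$ are winning, so only (I) and (II) need control.

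\emph{Forfeit (I).} Fix $v$, $c$ and $D$. While the box $B^D_{v,c}$ is active, each new edge exposed at $v$ into $V_D$ in layer $c$ is chosen u.a.r.\ among the edges not yet exposed at $v$. By Claim~\ref{clm:rainbow-conn.Breaker-degree} fewer than $0.14\frac n2$ of these belong to Breaker, and at most $\alpha_2\frac n2=o(n)$ have already been exposed. Hence, conditionally on the history, each exposure gives Maker a new claimed out-edge with probability at least $0.85$.

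Forfeiting requires $\alpha_2\frac n2=2\alpha_1\frac n2$ exposures that yield fewer than $\alpha_1\frac n2$ successes. By Lemma~\ref{lemma:Chernoff_modified}(a), with expected number of successes at least $1.7\,\alpha_1\frac n2$, this has probability $\exp(-\Theta(n/b))=\exp(-\Theta(n^{1/(k+1)}))$. A union bound over the $2sn$ triples $(v,c,D)$ shows that (I) a.a.s.\ never happens. The exposures made after the box becomes inactive only fill $\Gamma_{1,c}$ up to exactly $\alpha_2\frac n2$ edges, so they cannot trigger (I).

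\emph{Forfeit (II).} The set $B_c(v,w,C_L,C_R)$ only contains edges of layer $c$ between $\NeighsOutL{M}{C_L}{v}$ and $\NeighsOutR{M}{C_R}{w}$. Its size is therefore at most $\DegrKOut{L+}{M}{C_L}{v}\cdot\DegrKOut{R+}{M}{C_R}{w}$, and each of these is at most the product of the $k$ relevant one-step out-degrees $\DegrKOut{D+}{M}{c_i}{u}$. So it suffices to bound every $\DegrKOut{D+}{M}{c}{u}$ uniformly by about $\frac{n}{100b}=\frac{pn}{100\gamma}$, the one-step size built into $M_3$.

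\begin{itemize}
\item Moves of type~1 contribute exactly $\alpha_1\frac n2=\frac{n}{100b}$ claimed out-edges and then stop claiming.
\item Additional Maker edges inside $\NeighsKOut{D+}{\Gamma_1}{c}{u}$ can only come from the unclaimed exposed edges of $\Gamma_{1,c}$, at most $\alpha_2\frac n2$ of them, which Maker later claims through a successful coin toss in a move of type~2 or~3.
\item Each such coin has success probability $p=n^{-k/(k+1)}$, independently of the choice of $\Gamma_1$. The expected number of extra edges at $u$ is thus $O(pn/b)=O(n^{(1-k)/(k+1)})=O(1)$, and Lemma~\ref{lemma:Chernoff_modified}(b) plus a union bound over $(u,c,D)$ shows that a.a.s.\ every vertex has at most a constant number $K$ of them.
\end{itemize}

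The main obstacle is that this only gives $|B_c(v,w,C_L,C_R)|\le(\frac{n}{100b}+K)^{2k}$, which matches $M_3=(\frac{pn}{100\gamma})^{2k}$ only up to a factor $1+o(1)$; the constant $100$ in $M_3$ leaves no slack. I would try to close this gap in one of two ways. The first is to use that only \emph{free} edges are placed into $B_c$: Breaker's edges are never added, and Maker's own edges between the sets (at least one, typically) are not free either. The second is to show that the rare extra edges are not free when counted, or are absorbed because some of the $\frac{n}{100b}$ endpoints coincide across different paths. If neither works cleanly, I would note that the argument only needs $M_3$ up to a constant factor, since Claim~\ref{clm:rainbow-conn.SBGcheck} still holds with $M_3$ enlarged by a factor $2^{2k}$. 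I would then run the above with that slack, after which the deterministic bound $(\alpha_2\frac n2)^{2k}$ alone suffices.
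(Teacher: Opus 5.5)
Your treatment of forfeit (I) is the paper's argument. The paper counts failures, each with probability below $0.15$, and you count successes; the Chernoff bound and union bound are the same.

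For (II), the paper's proof is exactly your first bullet. The quantity $d^{D+}_{M,c}(u)$ is the number of Maker's elements in the box $B^D_{u,c}$ of the simulated MinBox game. By Remark~\ref{rem:MinBox}, Maker never plays in a box once it is inactive, so $d^{D+}_{M,c}(u)\le\alpha_1\frac n2$ deterministically. Hence
\[
|B_c(v,w,C_L,C_R)|\le\left(\alpha_1\tfrac n2\right)^{2k}=\left(\tfrac{pn}{100\gamma}\right)^{2k}=M_3 .
\]
No slack is needed, because $M_3$ was chosen to equal exactly this product. The ``extra'' out-neighbours you worry about are fill-up edges of $\Gamma_{1,c}$ that Maker ends up owning. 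This can happen via moves of type~2 or~3, or via a type-1 move at the other endpoint, a source your second bullet omits. Such edges are not counted in $N^{D+}_{M,c}(u)$ as the paper uses it. Claim~\ref{clm:rainbow-conn.Breaker-degree} identifies $d^{D+}_{M,c}(v)$ with $w_M(B^D_{v,c})$. Claim~\ref{clm:rainbow-conn.expansion}(i) asserts the equality $d^{D+}_{M,c}(v)=\frac{n}{100b}$, which only makes sense if out-edges are the ones Maker claims in type-1 moves at $v$. The Ghost's winning sets in type-3 moves are defined through these same sets, so adopting (and stating) this reading gives a consistent strategy, and your first bullet then finishes the proof.

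As written, however, your part (II) does not prove the statement. The step ``a.a.s.\ every vertex has at most a constant number $K$ of extra edges'' is not justified. For $k=1$ the mean is of constant order, so Lemma~\ref{lemma:Chernoff_modified}(b) only gives a per-triple failure bound of $e^{-\Theta(1)}$. That cannot be union-bounded over $2sn$ triples, and a uniform constant bound should not be expected at all. Your first two proposed fixes (free edges only, coinciding endpoints) are heuristics that you do not carry out.

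The third fix is a complete argument, but for a different strategy from the one in the statement. It enlarges $M_3$ by $2^{2k}$ and uses $|N^{D+}_{\Gamma_1,c}(u)|\le\alpha_2\frac n2$. That change would be harmless for the paper: $M_3$ enters only Claim~\ref{clm:rainbow-conn.SBGcheck}, where the relevant term is $\Theta(\sqrt{b\log n})=o(\ell_3)$ and absorbs a constant factor. Still, the missing piece is simply to keep your first bullet: the one-step Maker out-degrees are capped by construction at exactly $\alpha_1\frac n2$.
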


\begin{proof}
Maker would forfeit the game in two cases. 
The first case occurs, if a move of type 1 requires her to add more than $\alpha_2 \frac n 2$ outgoing edges from some vertex $v$ in some color $c$ into some subset $V_D$ to $\Gamma_1$.
The second case occurs, when the game state for a move of type 3 can no longer be converted into the SBG setting, because a 
hyperedge $B_c(v,w,C_L,C_R)$ becomes too large.

We will show, that the first case a.a.s. never occurs.
If Maker exposes an edge, that is already occupied by Breaker, 
we call it a \emph{failure}. 
For each vertex $v$, every color $c$ and every $D \in \{L, R\}$,
let $F_c^D(v)$ be the number of \emph{failures} Maker makes, while exposing edges of color $c$ from $v$ into $V_D$ before $\DegrKOut{D+}{M}{c}{v} = \alpha_1 \frac n 2$ is reached. We need to show that $F_c^D(v) \leq \alpha_2 \frac n 2  - \alpha_1 \frac n 2 = \alpha_1 \frac n 2$ holds for every $c \in S$ and $v \in V$.
    
From the previous claim, we know that for every vertex $v$ and every color $c$, 
the Breaker-degree into $V_D$, denoted $\DegrKOut{D}{B}{c}{v}$ 
is bounded from above by $0.14 \frac n 2$, 
as long as $\DegrKOut{D+}{M}{c}{v} < \alpha_1 \frac n 2$ holds. 
Hence, we can bound the probability of a specific try being a failure by 
$\frac{0.14 \frac n 2}{(1 - \alpha_2) \frac n 2} < 0.15$ 
as long as Maker has not exposed more than $\alpha_2 \frac n 2$ edges and $\DegrKOut{D+}{M}{c}{v} < \alpha_1 \frac n 2$.
Thus, while doing at most $\alpha_2 \frac n 2$ exposes at $v$ in layer $c$ into $V_D$, 
the expected number of failures is at most $0.15 \alpha_2 \frac n 2 = 0.3 \alpha_1 \frac n 2$. 
Using Chernoff (Lemma \ref{lemma:Chernoff_modified}) and a union bound 
it follows that $F_c^D(v) \leq \alpha_1 \frac n 2$ holds a.a.s.~for every vertex $v$, every color $c$ and every side $D$.

\smallskip

Left to show is that Maker will also not forfeit in moves of type 3.
By construction, for every vertex $v$, every color $c$ and every side $D$, 
we have $\DegrKOut{D+}{M}{c}{v} \leq \alpha_1 \frac n 2$. 
Thus, for all $(C_L, c, C_R) \in \Phi$ and vertices $v$ and $w$,
the bounds
$$
    \DegrOutL{M}{C_L}{v} 
    \leq \left(\alpha_1 \frac n 2\right)^k 
    = \left(\frac{p n}{100\gamma}\right)^k
    \text{~~~and~~~}
    \DegrOutR{M}{C_R}{w} \leq \left(\frac{p n}{100\gamma}\right)^k 
$$
hold throughout the game.
As exactly $\DegrOutL{M}{C_L}{v} \cdot \DegrOutR{M}{C_R}{w}$ edges in color $c$ exist between $\NeighsOutL{M}{C_L}{v}$ and $\NeighsOutR{M}{C_R}{w}$, 
the requirement $\abs{B_c(v, w, C_L, C_R)} \leq M_3$ is never violated.
\end{proof}

Next, we turn our attention to the random graphs that Maker generates, and prove a few useful properties that we need for later claims.

\begin{claim}\label{clm:rainbow-conn.random.graphs}
If Maker does not forfeit the game,
for each $c\in S$, the random graph $\Gamma_{2,c}$ is distributed like $G_{n,p}$.
Moreover, the following properties hold a.a.s.:
\begin{enumerate}
    \item[(i)] $\DegrKOut{D-}{\Gamma_1}{c}{v} \leq \frac{n}{b}$ 
        for every $D \in \{L, R\}$, $v\in V_D$ and $c\in S$,
    \item[(ii)] $(1-\delta)np \leq \Degr_{\Gamma_{2, c}}(v) \leq (1+\delta)np$ for every $v\in V$ and $c\in S$,
    \item[(iii)] For every $v,w\in V$ and 
    $(C_L,c,C_R)\in \Phi$ the following holds:
    for all subsets $A \subseteq \NeighsOutL{\Gamma_1}{C_L}{v}$ 
    and 
    $B \subseteq \NeighsOutR{\Gamma_1}{C_R}{w}$ 
    with
    $|A|\geq \frac{1}{10^3} \left(\frac{n}{10^4b} \right)^{k}$
    and
    $|B|\geq \frac{1}{10^3} \left(\frac{n}{10^4b} \right)^{k}$, we have
    $$
    \frac{1}{2}p|A||B| \leq 
    e_{\Gamma_{2,c}}(A,B) \leq 2p|A||B| .
    $$
\end{enumerate}
\end{claim}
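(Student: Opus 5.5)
The plan is to establish the distributional statement first, then (ii) and (iii) from it, and to treat (i) separately.

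\emph{Distribution of $\Gamma_{2,c}$.} Every edge of layer $c$ receives exactly one coin toss with success probability $p$, independent of everything before it. The toss happens in a move of type 2 or 3 when the edge is first proposed, or else at the end of the game, for Breaker edges never proposed. The order of the tosses is chosen adaptively, using Breaker's moves and earlier outcomes. Still, each toss is a fresh $\mathrm{Ber}(p)$ variable independent of the past, and no edge is tossed twice, because previously tossed edges are looked up rather than re-tossed. A standard principle of deferred decisions / coupling argument then shows that the final indicator vector is a product of $\mathrm{Ber}(p)$ variables, so $\Gamma_{2,c}\sim G_{n,p}$. We also need every edge to be tossed eventually. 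Maker's edges are tossed when she claims them. Edges she never proposes are either Breaker's, and so tossed at the end, or would stay free, which cannot happen because the game ends only when every edge is decided.

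\emph{Property (ii).} Since $\Gamma_{2,c}\sim G_{n,p}$, $\Degr_{\Gamma_{2,c}}(v)\sim\mathrm{Bin}(n-1,p)$. The mean is $pn=n^{1/(k+1)}$, which is polynomial in $n$. Chernoff gives deviation probability $\exp(-\Omega(\delta^2 pn))$, and a union bound over the $sn$ pairs $(v,c)$ finishes the proof.

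\emph{Property (i).} The in-degree counts the vertices $w\in V_{D'}$, over both sides, whose out-set $\NeighsKOut{D+}{\Gamma_1}{c}{w}$ contains $v$. Each $w$ ultimately selects a set of size $\alpha_2\frac n2$ in $V_D\setminus\{w\}$. The selection is done sequentially: some edges are drawn u.a.r.\ among not-yet-exposed ones, with redraws after failures, and the rest are completed u.a.r. Conditioned on the other vertices, the final set at each $w$ is a uniformly random subset of that size, because the failure mechanism only depends on Breaker's edges and never biases which unexposed edges are drawn. I expect this point to need care, since Breaker adapts to what has been revealed. If it causes trouble, I would instead bound each individual draw landing on $v$ by $1/((1-\alpha_2)\frac n2)$ and use Lemma~\ref{lemma:Chernoff_modified}(b) over all draws. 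The total number of draws into $V_D$ is at most $n\cdot\alpha_2\frac n2$, so the expected in-degree is at most about $2\alpha_2 n\cdot\frac12=\frac{n}{25b}\cdot(1+o(1))\le\frac{n}{2b}$. Lemma~\ref{lemma:Chernoff_modified}(b) then gives probability $\exp(-\Omega(n/b))=\exp(-n^{\Omega(1)})$ that this exceeds $\frac nb$, and a union bound finishes (i).

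\emph{Property (iii): the main obstacle.} The sets $\NeighsOutL{\Gamma_1}{C_L}{v}$ depend on the game, and their relation to $\Gamma_2$ is adaptive. The cleanest route is to take a union bound over all possible sets. These sets have size at most $(\alpha_2 n/2)^k=O((pn/\gamma)^k)$, so I would bound, for each fixed pair of candidate ground sets of that size, the count over subsets $A,B$. The randomness of $\Gamma_{2,c}$ is independent of $\Gamma_1$, because Maker's type-1 choices never look at coin tosses. It therefore suffices to show that in $G_{n,p}$, a.a.s., all disjoint pairs $A\subseteq V_L$, $B\subseteq V_R$ of sizes $a,b'\ge m:=10^{-3}(n/(10^4b))^k$ satisfy the edge-count bounds. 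A crude union bound over all such pairs is too expensive, since there are $\binom{n}{a}\binom{n}{b'}$ of them. The key quantity is $\mu=p\,|A||B|\ge p m^2=\Theta(p(pn/\gamma)^{2k})$, which is about $n^{(k-1)/(k+1)}\cdot\gamma^{-2k}$ times polylog-free factors. The count of pairs inside the fixed ground sets is at most $4^{O((pn/\gamma)^k)}$, and the number of choices of $(v,w,C)$ is polynomial. So I need $\mu\gg(pn)^k$. We have $p(pn)^{2k}/(pn)^k=p(pn)^k=n^{-k/(k+1)}n^{k/(k+1)}=1$, which is only a constant. The factor $\gamma^{-2k}$ against $\gamma^{-k}$, together with the small constants $10^{-3},10^{-4}$, must therefore make $\mu/|\text{ground set}|$ a large constant. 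That is enough to beat $4^{|\text{ground}|}$ after Chernoff with $\delta=1/2$. Checking these constants, and conditioning on the ground sets first, is the delicate step. For $k=0$ ($s\le2$) the statement is trivial and is excluded anyway.
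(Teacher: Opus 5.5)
This is essentially the paper's proof: one coin per edge gives $\Gamma_{2,c}\sim G_{n,p}$, and (i) and (ii) follow from expectation bounds plus Lemma~\ref{lemma:Chernoff_modified} and a union bound. For (iii), the paper likewise fixes $(v,w,C)$ and reveals $\Gamma_1$ first. It then union-bounds over the subset pairs of the two realised ground sets (of size at most $(\alpha_2 n/2)^k$, as you correctly note) and uses that $pm^2$ exceeds the ground-set size by a factor of order $\gamma^{-k}$.

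The one step to tighten is the independence of $\Gamma_1$ from the coins. Breaker may react to revealed coin outcomes, so the schedule of type-1 draws does depend on them, and "Maker never looks at the coins" is not enough. Instead, pre-sample an independent uniform ordering of the candidate edges for each $(w,c,D)$ and an independent coin for each edge; then the final $\Gamma_1$ is a function of the orderings alone and $\Gamma_2$ of the coins alone. The paper also takes this point for granted.
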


\begin{proof}
The random graph $\Gamma_{2,c}$
is generated in moves of type 2 and 3.
In these moves, until the end of the game, Maker tosses her coin exactly once on each edge that does not belong to $B\setminus \text{Sp}$. Then, when the game is already over, she also tosses her coin on each edge in $B\setminus \text{Sp}$, which ensures that $\Gamma_{2,c}$ is distributed like $G_{n,p}$. It thus remains to verify that (i)--(iii) hold a.a.s.

We start with property (i).
For each vertex $w \in V$ with $w \neq v$, the probability that there is an oriented edge $(w,v)$ of color $c$ is at most
$\frac{\alpha_2 \frac{n}{2}}{\frac{n}{2}-1} = (1+o(1))\alpha_2$,
and hence in expectation $\DegrKOut{D-}{\Gamma_1}{c}{v}$ is of size $(1+o(1))\alpha_2n$. 
Using Chernoff (Lemma~\ref{lemma:Chernoff_modified})
together with a union bound 
over all $D\in\{L,R\}$, $v\in V_D$ and $c\in S$
it follows that (i) holds a.a.s.

\smallskip

Next, consider property (ii). 
Since $\Gamma_{2,c}$ is distributed like
$G_{n,p}$, we have that
$\Degr_{\Gamma_{2,c}}(v)$ is distributed like
$Bin({n - 1},p)$ with expected value
$(n-1)p$. Again, using a Chernoff inequality (Lemma~\ref{lemma:Chernoff_modified})
together with a union bound 
over all $v\in V$ and $c\in S$
it follows that (ii) holds a.a.s.

\smallskip

It remains to check (iii).
It is enough to prove that
for a fixed choice of $v,w\in V$ and 
$(C_L, c, C_R)\in \Phi$,
the desired property in (iii) fails  with
probability at most $\exp(-\sqrt{n})$.
Then by a union bound over all $s^{2k+1} n^2$
options to choose $v,w\in V$ and 
$(C_L, c, C_R)\in \Phi$ we can conclude that (iii) holds a.a.s.

Hence, let $v,w\in V$ and 
$(C_L, c, C_R)\in \Phi$ be fixed from now on.
We can look at the relevant random graph
without knowing what Maker and Breaker do.
So, let us first reveal the random graphs $\Gamma_{1,c'}$ with $c'\in S$. 
Then 
$\DegrKOut{D+}{\Gamma_1}{c'}{x} = \alpha_1 \frac{n}{2} = \frac{n}{100b}$
for every $x\in V$, $c'\in S$ and $D \in \{L, R\}$,
by definition of $\Gamma_{1,c'}$,
which implies
\begin{equation}\label{eq:random.graphs.bounds1}
    \abs{\NeighsOutL{\Gamma_1}{C_L}{v}} \leq \left( \frac{n}{100b} \right)^{k}
~~
\text{and}
~~
\abs{\NeighsOutR{\Gamma_1}{C_R}{w}} \leq \left( \frac{n}{100b} \right)^{k}.
\end{equation}

Only afterwards, reveal the random graph $\Gamma_{2,c}$.
Then for every 
$A \subseteq \NeighsOutL{\Gamma_1}{C_L}{v}$ and 
$B \subseteq \NeighsOutR{\Gamma_1}{C_R}{v}$
of size $|A|,|B| \geq \frac{1}{10^3} \left(\frac{n}{10^4b}\right)^{k}$, we have
$\Exp[ e_{\Gamma_2, c}(A, B) ] = p|A||B|$,
as $\Gamma_{2,c}$ is distributed like $G_{n,p}$ and since
$A\subseteq V_L$
and $B \subseteq V_R$ implies $A \cap B = \varnothing$.
Applying Chernoff (Lemma~\ref{lemma:Chernoff_modified}), 
we get 
$$ \mathbb{P}\left( \frac{1}{2}p|A||B| \leq 
    e_{\Gamma_{2,c}}(A,B) \leq 2p|A||B| ~~ \text{fails}\right) 
    \leq \exp\left(-\frac{1}{100}p|A||B|\right).
$$
Thus, by a union bound over all 
$A \subseteq \NeighsOutL{\Gamma_1}{C_L}{v}$ 
and 
$B \subseteq \NeighsOutR{\Gamma_1}{C_R}{w}$ 
with $|A|,|B| \geq \tilde{a} := \frac{1}{10^3} \left(\frac{n}{10^4b}\right)^{k}$, we see that (iii) fails for our fixed choice of $v,w\in V$ and $(C_L, c, C_R)\in \Phi$ with probability at most 
    \begin{align*}
         & \sum_{\substack{A \subseteq \NeighsOutL{\Gamma_1}{C_L}{v}\\ |A| \geq \tilde{a}}}
            ~~\sum_{\substack{B \subseteq \NeighsOutR{\Gamma_1}{C_R}{w} \\ |B| \geq \tilde{a}}} 
            \exp\left(-\frac{1}{100}p\abs{A}\abs{B}\right) 
        \leq ~ 2^{|\NeighsOutL{\Gamma_1}{C_L}{v}|} \cdot 
            2^{|\NeighsOutR{\Gamma_1}{C_R}{w}|} \cdot 
            \exp\left(-\frac{1}{100}p\tilde{a}^2\right) \\
        \leq ~& \exp\left( 2\left(\frac{n}{100b}\right)^{k}- 
            \frac{1}{10^8} n^{-\frac{k}{k + 1}} \left(\frac{n}{10^4b}\right)^{2 k}\right) 
        =  \exp \Bigg(\left(\frac{2}{100^k \gamma^k} 
            - \frac{1}{10^{8 (k+1)} \gamma^{2 k}}\right)
            n^{\frac{k}{k + 1}}\Bigg) \\
        \leq ~& \exp \left( - n^{\frac{k}{k + 1}} \right)
        \leq \exp(-\sqrt{n}) 
    \end{align*}
where the second inequality uses 
\eqref{eq:random.graphs.bounds1}, the only equation uses the definition
of $b$, and the last line uses the choice of $\gamma$. 
This finishes the proof of the claim.    
\end{proof}

Next, we prove degree and expansion properties
for Maker's subgraphs of $\Gamma_{1,c}$.
The proof for part (iii) follows an argument of Krivelevich~\cite{krivelevich2011critical}.

\begin{claim}\label{clm:rainbow-conn.expansion}
    A.a.s.~the moves of type 1 ensure that at the end of the game the following properties are true for all vertices $v$, all colors $c$, all $D \in \{L, R\}$ and all sets of vertices $A$ with $|A| \leq b$:
    $$\text{(i)}~ 
    \DegrKOut{D+}{M}{c}{v} = \frac{n}{100b},
    \quad\quad\quad
    \text{(ii)}~\DegrKOut{D-}{M}{c}{v} \leq \frac{n}{b},
    \quad\quad\quad
    \text{(iii)}~ \DegrKOut{D+}{M}{c}{A} \geq \frac{|A|n}{10^4b}.$$
\end{claim}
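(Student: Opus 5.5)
Parts (i) and (ii) will follow directly from facts already established; the real work is (iii).

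For (i), I will use that Maker a.a.s.\ does not forfeit (Claim~\ref{clm:rainbow-conn.no.forfeit}). Together with Theorem~\ref{thm:MinBox}, applied to the simulated $\left(2sn,\frac n2,\alpha_1,6b\right)$-MinBox game, this gives the following. The parameters satisfy $\alpha_1<\frac{1}{6b+1}$ and $\frac n2 > \frac{6b(\log(2sn)+1)}{1-\alpha_1(6b+1)}$ for large $n$. Hence Maker wins the MinBox game and reaches $\DegrKOut{D+}{M}{c}{v}=\alpha_1\frac n2=\frac{n}{100b}$ in every box. She never exceeds this value, since a box becomes inactive once it is reached and afterwards only unclaimed edges are exposed.

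For (ii), I note that $\NeighsKOut{D-}{M}{c}{v}\subseteq \NeighsKOut{D-}{\Gamma_1}{c}{v}$, so the bound follows from Claim~\ref{clm:rainbow-conn.random.graphs}(i).

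For (iii), I follow Krivelevich's argument and use a first moment bound over bad pairs. Fix $c$, $D$ and a set $A$ with $|A|=a\le b$. If $\DegrKOut{D+}{M}{c}{A}<\frac{an}{10^4b}$, then there is a set $T\subseteq V_D$ with $|T|=\frac{an}{10^4b}$ such that every Maker out-edge of every $v\in A$ into $V_D$ lands in $T$. The key point is that each Maker out-edge at $v$ is a uniformly random choice among the unexposed edges at $v$ into $V_D$, conditioned on not being Breaker's. Before the box of $v$ closes, at most $\alpha_2\frac n2$ edges have been exposed at $v$, and by Claim~\ref{clm:rainbow-conn.Breaker-degree} Breaker holds fewer than $0.14\frac n2$ of them. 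So each successful exposure at $v$ hits $T$ with probability at most
$$\frac{|T|}{(0.86-\alpha_2)\frac n2}\le \frac{3|T|}{n}=\frac{3a}{10^4 b}.$$
These choices are sequential, so the probabilities multiply as in Lemma~\ref{lemma:Chernoff_modified}. Over the $a\cdot\frac{n}{100b}$ Maker out-edges from $A$, the probability that all of them land in $T$ is at most $\left(\frac{3a}{10^4b}\right)^{an/(100b)}$. Edges from $v$ to vertices of $A$ itself need no separate treatment, because $T$ can simply be taken to contain them.

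The union bound then runs over $A$ and $T$. There are $\binom{n}{a}\binom{n/2}{an/(10^4b)}\le \exp\left(a\log n + \frac{an}{10^4b}\log\frac{e\cdot 10^4 b}{2a}\right)$ choices. Since $\frac{n}{b}\gg\log n$, the first term is negligible, and the total exponent is at most
$$\frac{an}{100b}\Bigl(o(1)+\tfrac{1}{100}\log\tfrac{10^4 e b}{a}-\log\tfrac{10^4 b}{3a}\Bigr)\le -\frac{an}{200b}\log\frac{10^4b}{3a}.$$
This is at most $-\Omega(\frac{an}{b})$, and summing over $a\le b$, $c\in S$ and $D\in\{L,R\}$ gives $o(1)$.

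The main obstacle is to justify the conditional probability bound rigorously. The conditioning involves Breaker's adaptive play and the interleaving of failures, and Breaker's degree bound only holds while the box is active. I would handle this by a coupling that reveals each exposure in turn and bounds the conditional probability of hitting $T$ uniformly by $\frac{3|T|}{n}$. This is valid because only exposures made while the box is active produce Maker edges, and at those times Claim~\ref{clm:rainbow-conn.Breaker-degree} applies.
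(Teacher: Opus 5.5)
Your proposal is correct and follows the paper's proof. Part (i) comes from Maker winning the simulated MinBox game; you also check the parameter conditions of Theorem~\ref{thm:MinBox} explicitly. Part (ii) comes from $\NeighsKOut{D-}{M}{c}{v}\subseteq\NeighsKOut{D-}{\Gamma_1}{c}{v}$ together with Claim~\ref{clm:rainbow-conn.random.graphs}(i). Part (iii) is the same Krivelevich-style union bound over pairs $(A,T)$, using that each successful exposure at an active box is uniform over at least $(0.86-\alpha_2)\frac n2$ edges. That denominator is slightly more careful than the paper's $0.86\frac n2$, but it does not change the final estimate.
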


\begin{proof}
Condition on the good events of Claim~\ref{clm:rainbow-conn.no.forfeit}
and Claim~\ref{clm:rainbow-conn.random.graphs}.
We start with (i). The equation $\DegrKOut{D+}{M}{c}{v} = \alpha_1 \frac n 2 = \frac{n}{100b}$ holds for every vertex $v$, every color $c$ and every side $D$, since Maker wins the MinBox-game, she plays during moves of type 1. 
By Claim~\ref{clm:rainbow-conn.Breaker-degree},
we know that, as long as $\DegrKOut{D+}{M}{c}{v} < \alpha_1 \frac n 2$ holds, $\DegrKOut{D}{B}{c}{v} \leq 0.14 \frac n 2$ holds as well. Hence, the box $B_{v, c}^D$ is not full and Maker can continue claiming elements of that box and hence outgoing edges at $v$
until $\DegrKOut{D+}{M}{c}{v}$ is equal to $\alpha_1 \frac n 2$. Afterwards Maker stops claiming edges in layer $c$ from $v$ into $V_D$, due to Remark~\ref{rem:MinBox}.
For (ii), note that Maker's directed edges of color $c$ form a subset of the edges of $\Gamma_{1,c}$. Hence, with Claim~\ref{clm:rainbow-conn.random.graphs}, we a.a.s. get $\DegrKOut{D-}{M}{c}{v} \leq \Degr_{\Gamma_{1, c}}^{D-}(v) \leq \frac{n}{b}$ for each side $D$, color $c$ and every vertex $v\in V_D$. If $v\notin V_D$, then 
$\DegrKOut{D-}{M}{c}{v}=0$ by definition.

\smallskip

It remains to verify (iii). 
We want to show, that a.a.s. for every side $D$, every color $c$ and every set $A \subset V$ with $\abs{A} \leq b$, 
the inequality $\DegrKOut{D+}{M}{c}{A} > \frac{\abs{A}n}{10^4b}$ holds. 
To do so, we show, that with high probability there is no pair of vertex sets 
$A\subseteq V$ and $B\subseteq V_D$ with $\abs{A} \leq b$ and $\abs{B} = \frac{\abs{A}n}{10^4b}$, 
such that all outgoing edges from the set $A$ of color $c$ into $V_D$ are directed to a vertex in $B$. 
We do that using the union bound over all such pairs. 
Let us start by fixing sets $A$ and $B$
as described.
In total, Maker claims $|A|\cdot \frac{n}{100b}$ outgoing edges from $A$ into $V_D$.
Each time she performs a move of type 1 at a vertex $v\in A$ in layer $c$ corresponding to a box $B_{v, c}^D$ in the MinBox game, she exposes edges at $v\in A$ u.a.r.~from the set of all incident edges towards $V_D$ in layer $c$ that she did not expose at vertex $v$ yet, until she finally obtains an edge not claimed by Breaker. This edge which she then claims is thus chosen u.a.r.~among the at least $0.86 \frac n 2$
edges at $v$ which were neither exposed at $v$ yet nor claimed by Breaker. Therefore, the probability that this one edge goes to a vertex in $B$ is at most $\frac{\abs{B}}{0.86 \frac n 2}$. Hence, the probability that all of Maker's outgoing edges at $A$ into $V_D$ go to $B$ is at most $\left(\frac{\abs{B}}{0.86 \frac n 2}\right)^{\abs{A} \frac{n}{100b}}$.

Using the union bound over all such sets $A\subseteq V$ of size at most $b$ and all corresponding sets $B\subseteq V_D$ of size $\frac{ \abs{A} n}{10^4b}$, we see that (iii) fails with probability at most
\begin{equation*}
    \begin{split}
    &\sum_{a = 1}^b \binom{n}{a} \binom{\frac n 2}{\frac{a n}{10^4b}} 
        \left(\frac{\frac{a n}{10^4b}}{0.86 \frac n 2}\right)^{\frac{an}{100b}} 
    \leq ~ \sum_{a = 1}^b n^a \left(\frac{e \cdot 10^4b}{2 a}\right)^\frac{a n}{10^4b} 
        \left( \frac{a}{4300 b}\right)^\frac{an}{100b} \\
    = &\sum_{a=1}^b \left( n\left(\frac{a}{b}\right)^{\frac{n}{100b} 
        - \frac{n}{10^4b}}\left[ (5000e)^\frac{1}{10^4}\left(\frac{1}{4300}
        \right)^{\frac{1}{100}}\right]^\frac{n}{b}\right)^a 
    \leq ~ \sum_{a = 1}^b \left( n \cdot 1 \cdot 0.93^\frac{n}{b}\right)^a = o(1)
    \end{split}
\end{equation*}
by the choice of $b$.
\end{proof}

With the previous claims in hand, we
can now deduce the following properties 
that we need for analyzing the balancing game.

\begin{claim}\label{clm:rainbow-conn.neighboroods}
    A.a.s.~at the end of the game, the following is true for each $v, w \in V$
    and each $(C_L, c, C_R) \in \Phi$:
\begin{enumerate}
\item[(i)] $\frac{1}{2}(\frac{n}{10^4b})^{k} \leq |\NeighsOutL{M}{C_L}{v}| \leq  (\frac{n}{100b})^{k}$ ~ and ~
    $\frac{1}{2}(\frac{n}{10^4b})^{k} \leq |\NeighsOutR{M}{C_R}{w}| \leq  (\frac{n}{100b})^{k}$,

\item[(ii)]   
$e_{\Gamma_{2,c}}(\NeighsOutL{M}{C_L}{v}, \NeighsOutR{M}{C_R}{w}) \geq \frac{1}{2} p |\NeighsOutL{M}{C_L}{v}| |\NeighsOutR{M}{C_R}{w}|$.
\end{enumerate}    
\end{claim}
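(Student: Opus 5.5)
We condition on the a.a.s.\ good events of Claims~\ref{clm:rainbow-conn.no.forfeit}, \ref{clm:rainbow-conn.random.graphs} and \ref{clm:rainbow-conn.expansion}. Then both (i) and (ii) become deterministic consequences of these events.

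For the upper bounds in (i), we use Claim~\ref{clm:rainbow-conn.expansion}(i): every vertex has exactly $\frac{n}{100b}$ Maker out-edges of each colour into each side. Hence $|\NeighsOutL{M}{C_L}{v}|\le (\frac{n}{100b})^k$, and likewise for the $R$-side.

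For the lower bound we induct along the colour sequence. Set $A_0=\{v\}$ and $A_j=\NeighsOutL{M}{(c_1,\dots,c_j)}{v}$, so that $A_{j}=\NeighsKOut{L+}{M}{c_j}{A_{j-1}}$. We claim $|A_j|\ge \frac12(\frac{n}{10^4 b})^j$ for all $j\le k$. If $|A_{j-1}|\le b$, we apply Claim~\ref{clm:rainbow-conn.expansion}(iii) directly and get $|A_j|\ge |A_{j-1}|\frac{n}{10^4b}$. Otherwise $|A_{j-1}|>b$, and we pass to a subset $A'\subseteq A_{j-1}$ of size $b$; this gives $|A_j|\ge \frac{n}{10^4}$. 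Since $j\le k$, we have $(\frac{n}{10^4b})^j\le (\frac{n}{10^4 b})^k=\Theta(\gamma^{-k}n^{k/(k+1)})\ll n$, so this case is also fine, with room to spare. The factor $\frac12$ is only slack, needed if rounding of $b$ or of the set sizes costs anything. The $R$-side is symmetric, since the expansion from $w$ uses $V_R$ and the same Claim~\ref{clm:rainbow-conn.expansion}(iii).

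For (ii), Maker's edges in $\Gamma_1$ are a subset of $\Gamma_1$, so $A:=\NeighsOutL{M}{C_L}{v}\subseteq \NeighsOutL{\Gamma_1}{C_L}{v}$ and $B:=\NeighsOutR{M}{C_R}{w}\subseteq\NeighsOutR{\Gamma_1}{C_R}{w}$. By (i), both sets have size at least $\frac12(\frac{n}{10^4b})^k\ge \frac1{10^3}(\frac{n}{10^4b})^k$. Claim~\ref{clm:rainbow-conn.random.graphs}(iii) therefore applies and gives $e_{\Gamma_{2,c}}(A,B)\ge \frac12 p|A||B|$. This holds simultaneously for all $v,w$ and all $C\in\Phi$, because Claim~\ref{clm:rainbow-conn.random.graphs}(iii) quantifies over all subsets.

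The main obstacle is the induction step in (i). The sets $A_{j-1}$ are themselves random and determined by the game, so the expansion property must hold uniformly over all sets of size at most $b$. Claim~\ref{clm:rainbow-conn.expansion}(iii) provides exactly this uniformity. The remaining care is in checking that the sizes stay below the regime where truncation to $b$ would lose too much, i.e.\ that $b\cdot\frac{n}{10^4b}$ dominates $(\frac{n}{10^4b})^k$, which holds by the choice $b=\gamma n^{k/(k+1)}$.
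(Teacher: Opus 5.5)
Your overall route is the paper's: the upper bound comes from the exact out-degrees in Claim~\ref{clm:rainbow-conn.expansion}(i), the lower bound from iterating Claim~\ref{clm:rainbow-conn.expansion}(iii) along $C_L$, and (ii) from $N^{L+}_{M,C_L}(v)\subseteq N^{L+}_{\Gamma_1,C_L}(v)$ together with Claim~\ref{clm:rainbow-conn.random.graphs}(iii). Your upper bounds and your part (ii) are fine as written.

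The gap is in the induction step for the lower bound, which rests on the identity $A_j=N^{L+}_{M,c_j}(A_{j-1})$. This identity is false for the sets as defined. The set $N^{L+}_{M,C_L(j)}(v)$ consists of endpoints of \emph{paths} $ve_1v_1\ldots e_jv_j$, i.e.\ without repeated vertices. This matters later, since Claim~\ref{clm:rainbow-conn.many.paths} and the count in Theorem~\ref{thm:rainbow-conn.s.constant}(b) need genuine paths. A Maker out-neighbour in colour $c_j$ of some $x\in A_{j-1}$ may be $v$ itself or a vertex from an earlier level. In that case nothing certifies it as the endpoint of such a path. What you can prove is only
\[
N^{L+}_{M,c_j}(A_{j-1})\setminus U_{j-1}\subseteq A_j, \qquad U_{j-1}:=\bigcup_{i=0}^{j-1}A_i .
\]
This inclusion holds because a vertex outside $U_{j-1}$ cannot lie on any path from $v$ to $x$ with colour sequence $(c_1,\ldots,c_{j-1})$, so appending the edge keeps it a path.

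Consequently the bound from Claim~\ref{clm:rainbow-conn.expansion}(iii) has to be reduced by $|U_{j-1}|\le 2(n/(100b))^{j-1}$. For $j\le k$ this is only an $O(b/n)=o(1)$ fraction of $(n/(10^4b))^j$. However, your induction hypothesis with a fixed constant does not survive the subtraction, since $\frac12 x-|U_{j-1}|<\frac12 x$. You need to carry a slowly decaying factor. The paper proves $|A_j|\ge (n/(10^4b))^j\,(1-j\,10^{4k}b/n)$, which still yields $\frac12(n/(10^4b))^k$ at $j=k$. So the $\frac12$ in the statement is room for exactly this overlap, not for rounding.

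Finally, your case $|A_{j-1}|>b$ never occurs. For $j\le k$ the upper bound gives $|A_{j-1}|\le (n/(100b))^{k-1}=o(b)$, so Claim~\ref{clm:rainbow-conn.expansion}(iii) always applies directly.
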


\begin{proof}
    We condition on the likely events from 
    Claim~\ref{clm:rainbow-conn.random.graphs}
    and Claim~\ref{clm:rainbow-conn.expansion}.
    We start with the proof of (i), and by symmetry, we only prove the bounds on
   $|\NeighsOutL{M}{C_L}{v}|$. 
    In order to do so, for every $i\in [k]$, we define $C_L(i)$ as the 
    subsequence of $C_L$ consisting of the first $i$ elements only,
    and we let $c_i$ denote the $i$-th color in $C_L$.
   The Claim~\ref{clm:rainbow-conn.expansion}(i) directly implies that
   $|\NeighsOutL{M}{C_L(i)}{v}|  \leq (\frac{n}{100b})^{i}$ for every $i\in [k]$.
   This gives the upper bound in (i).
   Moreover, for every $i\in [k-1]$, this gives
   $$
   |\NeighsOutL{M}{C_L(i)}{v}|  \leq \left(\frac{n}{100b}\right)^{k-1} = o(b)
   $$
   and hence, Claim~\ref{clm:rainbow-conn.expansion}(iii)
   can be applied for every $A\subseteq \NeighsOutL{M}{C_L(i)}{v}$.

   For the lower bound in (i), we now show by induction that
   \begin{equation}\label{eq:expansion.induction}  
   |\NeighsOutL{M}{C_L(i)}{v}|  \geq \left(\frac{n}{10^4b}\right)^{i} \left(1-\frac{i 10^{4k} b}{n}\right)
   \end{equation}
   holds for every $i\in [k]$. Note that then the lower bound on
   $|\NeighsOutL{M}{C_L}{v}|$
   in (i) follows, thus finishing the proof of property (i).
   The inequality \eqref{eq:expansion.induction} is surely true for $i=1$
   because of Claim~\ref{clm:rainbow-conn.expansion}(i).
   For the induction step assume that 
   $$|\NeighsOutL{M}{C_L(i-1)}{v}|  \geq \left(\frac{n}{10^4b}\right)^{i-1} \left(1-\frac{ (i-1) 10^{4k} b}{n}\right)$$
   is true. Then recall that, by definition, for every 
   vertex $x\in \NeighsOutL{M}{C_L(i-1)}{v}$ there exists a directed rainbow path from
   $v$ to $x$ with color sequence $C_L(i-1)$. Each vertex belonging to
   at least one of such paths is contained in $U_{i-1}:=\bigcup_{j\leq i-1} \NeighsOutL{M}{C_L(j)}{v}$
   and we have
   $$|U_{i-1}| \leq \sum_{j\leq i-1} |\NeighsOutL{M}{C_L(j)}{v}| \leq 2 \left(\frac{n}{100b}\right)^{i-1}
	< \left(\frac{n}{b}\right)^{i-1} .$$
   Moreover, note that $
   \NeighsOutL{M}{c_i}{\NeighsOutL{M}{C_L(i-1)}{v}}\setminus U_{i-1} \subseteq \NeighsOutL{M}{C_L(i)}{v}$.
   By using Claim~\ref{clm:rainbow-conn.expansion}(iii), we get
   $$
   \big|\NeighsOutL{M}{c_i}{\NeighsOutL{M}{C_L(i-1)}{v}}| \geq \frac{n}{10^4b}|\NeighsOutL{M}{C_L(i-1)}{v} \big|
	\geq \left(\frac{n}{10^4b}\right)^{i} \left(1-\frac{(i-1) 10^{4k} b}{n}\right)
   $$
  and therefore,
  \begin{align*}
  |\NeighsOutL{M}{C_L(i)}{v}| 
  & \geq 
  \big|\NeighsOutL{M}{c_i}{\NeighsOutL{M}{C_L(i-1)}{v}}| - |U_{i-1}|  \\
  & \geq 
  \left(\frac{n}{10^4b}\right)^{i} \left(1-\frac{(i-1) 10^{4k} b}{n}\right) - 
   \left(\frac{n}{b}\right)^{i-1} 
  \geq 
  \left(\frac{n}{10^4b}\right)^{i} 
  \left(1-\frac{i 10^{4k} b}{n}\right) .
  \end{align*}
  This completes the inductive proof.

  \smallskip

    Finally, property (ii) of this claim 
    follows directly from property (iii) 
    in Claim~\ref{clm:rainbow-conn.random.graphs}, applied
    with $A:= \NeighsOutL{M}{C_L}{v}$ and 
    $B:= \NeighsOutR{M}{C_R}{w}$. 
\end{proof}

\begin{claim}\label{clm:rainbow-conn.degree.bounds}
    A.a.s.~the moves of type 2 ensure $d_{B \setminus \text{Sp},c}(v) < \delta n$
    for each layer $c$ and each vertex $v$.
\end{claim}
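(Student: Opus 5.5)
We will read off the bound from the guarantee of the Spooky Balancing Game of type 2, which Maker wins by Claim~\ref{clm:rainbow-conn.SBGcheck}, and then use the coin tosses to convert Maker's share of claimed edges into a bound on Breaker's non-spooky edges. Fix a layer $c$ and a vertex $v$, and consider the hyperedge $F_v^c$ of the $(1,3b,E(G),sn,\ell_2,M_2)$-SBG. By construction, Breaker's elements in $F_v^c$ within the SBG are exactly his edges at $v$ in layer $c$ that are not in $\text{Sp}$, so $w_B(F_v^c)=d_{B\setminus\text{Sp},c}(v)$. Maker's SBG elements in $F_v^c$ are edges at $v$ in layer $c$ that she claimed in a move of type 2 or 3 after a successful coin toss; hence they all lie in $\Gamma_{2,c}$. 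This gives $c_{F_M}\le d_{\Gamma_{2,c}}(v)$.

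Next we apply the winning condition~\eqref{SBG:Goal} with $m=1$ and bias $3b$, which holds at every moment. Writing $c_F=c_{F_M}+w_B(F)$, it reads
$$c_{F_M}\ge \tfrac{1}{3b+1}\bigl(c_{F_M}+w_B(F)\bigr)-\ell_2 .$$
Rearranging gives
$$w_B(F)\le 3b\,c_{F_M}+(3b+1)\ell_2 .$$

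The probabilistic input is Claim~\ref{clm:rainbow-conn.random.graphs}(ii), which says that a.a.s.\ $d_{\Gamma_{2,c}}(v)\le(1+\delta)np$ for all $v,c$. Since $\Gamma_{2,c}$ is distributed like $G_{n,p}$ regardless of when the coins are tossed, this bound applies even to the coins tossed during play. We condition on this event and on Maker not forfeiting (Claim~\ref{clm:rainbow-conn.no.forfeit}). Plugging in $c_{F_M}\le(1+\delta)np$ and $\ell_2=\delta pn$ yields
$$d_{B\setminus\text{Sp},c}(v)\le 3b(1+\delta)pn+(3b+1)\delta pn .$$

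Finally we substitute $bp=\gamma$. The bound becomes at most $3\gamma(1+\delta)n+4\gamma\delta n$, which is well below $\delta n$ because $\gamma=10^{-4s}\delta$. This holds simultaneously for all $v$ and $c$ on the a.a.s.\ event, which proves the claim.

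The main subtlety is the identification $w_B(F_v^c)=d_{B\setminus\text{Sp},c}(v)$. One must check that every Breaker edge outside $\text{Sp}$ is indeed recorded as a Breaker element in the type~2 SBG, including edges Breaker claims after Maker tossed a successful coin on a different move type. One must also check that no edge in $\text{Sp}$ is counted for Breaker; the update rule handles this, since spooky edges go to Ghost. A second subtlety is that Maker's type~2 claims may repeat edges already held, which only makes $c_{F_M}$ an upper-bounded count of distinct $\Gamma_{2,c}$-edges. I would verify that Maker's SBG elements in $F_v^c$ are distinct edges, since each SBG element is claimed once, so the bound by $d_{\Gamma_{2,c}}(v)$ is valid.
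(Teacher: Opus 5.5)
Your proposal is correct and follows essentially the same route as the paper. You identify Breaker's elements of $F_v^c$ in the type-2 SBG with $d_{B\setminus \text{Sp},c}(v)$, bound Maker's elements by the degree in $\Gamma_{2,c}$ via Claim~\ref{clm:rainbow-conn.random.graphs}(ii), and rearrange the SBG winning inequality using $bp=\gamma$; your rearrangement $w_B\le 3b\,c_{F_M}+(3b+1)\ell_2$ is only marginally sharper than the paper's $(3b+1)(c_{F_M}+\ell_2)$, which changes nothing.
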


\begin{proof}
Let us condition on the good events from
Claim~\ref{clm:rainbow-conn.random.graphs}.
Then, $d_{\Gamma_2,c}(v) \leq (1 + \delta) n p$ and hence also $d_{M \cap \Gamma_2,c}(v) \leq (1 + \delta) n p$. 
Whenever Maker claims an edge at a vertex $v$ according to a move of type 2 (which then belongs to 
$\Gamma_{2,c}$), this edge is also added as a Maker's edge to the SBG. Hence, the number of Maker's elements in a hyperedge $F_v^c$ 
of this simulated game is upper bounded by 
$d_{M \cap \Gamma_2,c}(v)$.
Moreover, by the way the SBG is updated, the number of Breaker's elements in the same 
hyperedge $F_v^c$ equals 
$d_{B \setminus \text{Sp},c}(v)$.
Since Maker wins the SBG 
by Claim~\ref{clm:rainbow-conn.SBGcheck}, we obtain that 
$$d_{B \setminus \text{Sp},c}(v) \leq (3b + 1) (d_{M \cap \Gamma_2,c}(v) + \ell_2)
< 4b \cdot 2np \leq \delta n,
$$
by the choice of $p,b$. 
\end{proof}

Finally, we reach our key claim that ensures that Breaker a.a.s.~does not get too many relevant edges between
any sets $\NeighsOutL{M}{C_L}{v}
$ and $\NeighsOutR{M}{C_R}{w}
$
with $v, w \in V$ and $\mathcal{C} = (C_L, c, C_R) \in \Phi$.

\begin{claim}\label{clm:rainbow-conn.between.neighborhoods}
    A.a.s.~for all $(C_L, c, C_R) \in \Phi$ and for all $v, w \in V$ we have
   $$ e_{B\setminus \text{Sp},c}(\NeighsOutL{M}{C_L}{v}
,  \NeighsOutR{M}{C_R}{w}
 ) \leq \delta^{0.1}\left(\frac{n}{b}\right)^{2k} $$
   at the end of the game.
\end{claim}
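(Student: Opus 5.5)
The plan is to read off the bound from Maker's winning guarantee in the type~3 Spooky Balancing Game, Claim~\ref{clm:rainbow-conn.SBGcheck}. Fix $v,w\in V$ and $(C_L,c,C_R)\in\Phi$, and condition on the a.a.s.\ events of Claims~\ref{clm:rainbow-conn.no.forfeit}, \ref{clm:rainbow-conn.random.graphs}, \ref{clm:rainbow-conn.expansion} and~\ref{clm:rainbow-conn.neighboroods}. Write $X=\NeighsOutL{M}{C_L}{v}$ and $Y=\NeighsOutR{M}{C_R}{w}$ at the end of the game. Every edge of layer $c$ between $X$ and $Y$ falls into one of three classes. Either it was added by the Ghost to $B_c(v,w,C_L,C_R)$ while it was free, or it was already claimed when the pair first appeared between the growing sets, or it lies in $\text{Sp}$. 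Only Breaker edges outside $\text{Sp}$ matter, so we split them into edges counted in the winning set $F:=B_c(v,w,C_L,C_R)$ and ``early'' edges that Breaker owned before the pair entered $X\times Y$.

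For Breaker's edges inside $F$, the SBG is updated so that Breaker's elements of $F$ are exactly the non-$\text{Sp}$ Breaker edges of $F$, and Maker's elements are edges she claimed after a successful coin toss. Every such edge lies in $\Gamma_{2,c}$ between $X$ and $Y$. Maker winning the SBG gives, at the end,
\[
c_{F_B}\le 3b\,(c_{F_M}+\ell_3)\le 3b\bigl(e_{\Gamma_{2,c}}(X,Y)+\delta p(pn)^{2k}\bigr).
\]
By Claim~\ref{clm:rainbow-conn.random.graphs}(iii), with $A=X$ and $B=Y$, whose sizes are large enough by Claim~\ref{clm:rainbow-conn.neighboroods}(i), and by the upper bound in Claim~\ref{clm:rainbow-conn.neighboroods}(i), we get $e_{\Gamma_{2,c}}(X,Y)\le 2p(n/(100b))^{2k}$. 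Since $b=\gamma n^{t(s)}=\gamma/p$, we have $3b\cdot 2p(n/(100b))^{2k}=O(10^{-4k})(n/b)^{2k}$. Also $3b\,\delta p(pn)^{2k}=3\gamma\delta(\gamma n/b)^{2k}\ll\delta^{0.1}(n/b)^{2k}$. So the first contribution needs care: $6\cdot 100^{-2k}$ is not smaller than $\delta^{0.1}$. I would therefore sharpen this step using the balancing inequality with the actual claimed totals. In rough form, $c_{F_M}\ge\frac{1}{3b+1}c_F-\ell_3$ gives $c_{F_B}\le 3b(c_{F_M}+\ell_3)$, and $c_{F_M}$ counts only the Maker edges among the coin-tossed ones, which are at most $e_{\Gamma_{2,c}}(X,Y)$. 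Recomputing the constant, the first contribution is about $6\gamma\,100^{-2k}(n/b)^{2k}$ rather than $6\cdot 100^{-2k}(n/b)^{2k}$, because $bp=\gamma$. Since $\gamma=10^{-4s}\delta\le\delta^{0.1}/10$, this term is fine. The $\ell_3$ term is similarly $O(\gamma^{2k+1}\delta)(n/b)^{2k}$.

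The main obstacle is the early edges, namely Breaker edges $xy$ of layer $c$, not in $\text{Sp}$, claimed before $x\in X$ and $y\in Y$ both held. I would charge each such edge to the later Maker move that added its endpoint to $X$ or $Y$. Say $y$ entered $Y$ by a directed Maker edge $y'y$ with $y'$ on a $C_R$-path from $w$. At that moment $xy$ was already a non-$\text{Sp}$ Breaker edge at $y$, and there are at most $d_{B\setminus\text{Sp},c}(y)<\delta n$ of them by Claim~\ref{clm:rainbow-conn.degree.bounds}. That bound alone is too weak, since it would give $|Y|\delta n$. So we also use that $y$ is a uniformly random choice by Maker in a type~1 move. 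The number of such edges landing in $X$ is controlled on average by $|X|/n$ times the Breaker degree. Summing, the expected number of early edges is $O(\delta\,|X||Y|)=O(\delta(n/(100b))^{2k})$. I would make this rigorous with Lemma~\ref{lemma:Chernoff_modified}(b), exploiting the independence of the fresh random type~1 choices from earlier play, with tail at most $\exp(-\sqrt n)$, and then take a union bound over the $s^{2k+1}n^2$ choices. The concentration is comfortable because the early count only needs to be at most $\delta^{0.1}(n/b)^{2k}/2$, while its expectation is $O(\delta)(n/b)^{2k}$. The symmetric case of $x$ entering $X$ is handled the same way, and adding the two contributions gives the claim.
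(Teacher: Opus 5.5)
The part of your proposal for Breaker's edges inside the type-3 winning set $F=B_c(v,w,C_L,C_R)$ is correct and is the paper's case (ii). No ``sharpening'' is needed there: in the first computation you had only dropped the factor $bp=\gamma$.

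Your split of the other Breaker edges between $X=\NeighsOutL{M}{C_L}{v}$ and $Y=\NeighsOutR{M}{C_R}{w}$ misses one class. When a type-1 move enlarges $X$ or $Y$, the new pairs are free. The Ghost only puts them into $F$ at the next type-3 update, and before that Breaker can claim up to $2b$ of them. Such edges are neither in $F$ nor ``early''. They are easy to bound: at most $2b$ per enlargement and at most $2(n/(100b))^k$ enlargements, so $O(\gamma^{k+1})(n/b)^{2k}$ in total. They still have to be counted.

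The genuine gap is the early edges when $k\ge 2$. You assume that a vertex $y$ enters $Y$ through a freshly sampled Maker edge $y'y$, so that $y$ is uniformly random when it enters. That is not how $Y$ grows.

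Type-1 moves are scheduled by the MinBox danger, which Breaker influences. So the out-edges of a vertex $u$ in the later colours of $C_R$, and those of its descendants, may be claimed long before Maker claims the level-$j$ edge $zu$ with $z$ already reachable from $w$. At that moment the whole pre-built subtree $\NeighsOutR{M}{C_R(>j)}{u}$ enters $Y$ at once, where $C_R(>j)$ is the obvious suffix. Breaker has seen this subtree and may already have placed his edges to $X$ on exactly its vertices.

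The only fresh randomness is the choice of $u$, and the resulting increment can be as large as $(n/(100b))^{k-j}|X|$. So your estimate ``$|X|/n$ times the Breaker degree per entering vertex'' is not justified. Lemma~\ref{lemma:Chernoff_modified}(b), which is about sums of Bernoulli variables, also does not apply.

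The paper's case (iii) exists for exactly this reason. It introduces two kinds of vertices:
\begin{itemize}
\item bad vertices, with at least $\delta^{0.5}(n/b)^k$ Breaker edges to the other side while still outside; there are at most $\delta^{0.5}n$ of them by Claim~\ref{clm:rainbow-conn.degree.bounds};
\item crazy vertices of level $i$, with at least $\delta^{0.25}(n/b)^{k-i}$ bad vertices in their $C_L(>i)$-subtree; there are at most $\delta^{0.25}n$ of them by the in-degree bound in Claim~\ref{clm:rainbow-conn.expansion}(ii).
\end{itemize}
After this reduction, only Bernoulli events need to be concentrated, namely ``a random type-1 edge at level $i$ hits a crazy vertex (or, at level $k$, a bad vertex)''.

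Your charging scheme can be repaired along a different route:
\begin{itemize}
\item For a level-$j$ attachment, average the subtree sum over the uniformly random root $u$, using the in-degree bound $(n/b)^{k-j}$. This gives conditional expectation $O(\delta(n/b)^{k-j}|X|)$.
\item Apply a Chernoff or martingale bound to the rescaled $[0,1]$-valued increments.
\item The resulting tail is only $\exp(-\Theta(n/b))$, not $\exp(-\sqrt n)$, because $n/b=\Theta(n^{1/(k+1)})$. That still suffices for the union bound over the $s^{2k+1}n^2$ choices.
\end{itemize}
For $k=1$ there are no subtrees. Your argument then works once Lemma~\ref{lemma:Chernoff_modified} is replaced by its version for bounded summands.
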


\begin{proof}
In the following, let us condition on all the likely events from all the previous claims.

For fixed $v,w\in V$ and $(C_L,c,C_R)\in \Phi$, 
there are three kinds of ways how a Breaker edge $xy$
outside of the set $\text{Sp}$
can become an edge between $\NeighsOutL{M}{C_L}{v}
$ and $\NeighsOutR{M}{C_R}{w}
$ in layer $c$:
\begin{enumerate}
    \item[(i)] $x\in \NeighsOutL{M}{C_L}{v}
$ and 
    $y\in \NeighsOutR{M}{C_R}{w}
$ before Breaker claims $xy$, but Breaker claims $xy$ before $xy$ is added to $B_c(v,w,C_L,C_R)$
    in the Spooky Balancing Game simulated in moves of type 3,
    \item[(ii)] $x\in \NeighsOutL{M}{C_L}{v}
$ and 
    $y\in \NeighsOutR{M}{C_R}{w}
$ before Breaker claims $xy$, but Breaker claims $xy$ after $xy$ is added to $B_c(v,w,C_L,C_R)$
    in the Spooky Balancing Game simulated in moves of type 3,
    \item[(iii)] Breaker claims $xy$ before $x$ or $y$ is added to $\NeighsOutL{M}{C_L}{v}
$ or
    $\NeighsOutR{M}{C_R}{w}
$.
\end{enumerate}

In the following, we show that
for fixed $v,w\in V$ and $(C_L,c,C_R)\in \Phi$,
with probability at least $1-\exp(-\Theta(n/b))$,
each of the cases (i)--(iii) contributes at most
$\frac{1}{3}\delta^{0.1}\left(\frac{n}{b}\right)^{2k}$
to the number $e_{B\setminus \text{Sp},c}(\NeighsOutL{M}{C_L}{v}
,  \NeighsOutR{M}{C_R}{w}
 )$.
Then, Claim~\ref{clm:rainbow-conn.between.neighborhoods}
follows by a union bound over all choices of 
$v,w\in V$ and $(C_L,c,C_R)\in \Phi$.

\smallskip

Let us fix $v,w\in V$ and $(C_L,c,C_R)\in \Phi$
from now on.
We start to estimate the number of the edges in (i). 
For this, note that an edge $xy$ can only be counted in (i) if 
$x$ or $y$ is added to $\NeighsOutL{M}{C_L}{v}
$ or
    $\NeighsOutR{M}{C_R}{w}
$ by a move of type 1 and
    then Breaker claims $xy$ before Maker does the next update in the SBG simulated for moves of type 3. Also note that Breaker can only claim 2b edges between consecutive moves of type 1 and 3, and case (i) can only happen each time when $\NeighsOutL{M}{C_L}{v}
$ or
    $\NeighsOutR{M}{C_R}{w}
$ increases, i.e., at most
$2\left( \frac{n}{100b} \right)^{k}$ times
by Claim~\ref{clm:rainbow-conn.neighboroods}.
     Hence,
    there can be at most
    \begin{align*}
        2b\cdot 2\left( \frac{n}{100b} \right)^{k} < \frac{1}{3}\delta^{0.1}\left(\frac{n}{b}\right)^{2k} 
    \end{align*}
    edges counted in (i) until the end of the game.

\smallskip

As next, we consider the edges in (ii).
The edges in this case are precisely the 
edges claimed by Breaker in the set $B_c(v,w,C_1,C_2)$ in the simulated SBG in moves of type 3;
let their number be $b_c(v,w)$.
Moreover, the edges claimed by Maker 
in the set $B_c(v,w,C_1,C_2)$
are edges in
$E_{\Gamma_{2,c}\cap M}(\NeighsOutL{M}{C_L}{v}
,\NeighsOutR{M}{C_R}{w}
)$;
let their number be $m_c(v,w)$.
By Claim~\ref{clm:rainbow-conn.random.graphs}
we have
$$
m_c(v,w)
\leq 
|E_{\Gamma_{2,c}}(\NeighsOutL{M}{C_L}{v}
,\NeighsOutR{M}{C_R}{w}
)|
\leq
2p |\NeighsOutL{M}{C_L}{v}||
\NeighsOutR{M}{C_R}{w}| ,
$$
and, since Maker wins the SBG by Claim~\ref{clm:rainbow-conn.SBGcheck},
we have
$$
m_c(v,w) \geq \frac{1}{3b+1} \cdot (m_c(v,w) + b_c(v,w)) - \ell_3 .
$$ 
Thus, the number of edges in (ii) can be bounded by
\begin{align*}
b_c(v,w) \leq 3b\cdot m_c(v,w) + (3b+1) \cdot \ell_3 
 \leq 6bp \left( \frac{n}{100b} \right)^{2k} + 4\delta b p(pn)^{2k} 
 < \frac{1}{3}\delta^{0.1} \left( \frac{n}{100b} \right)^{2k} ,
\end{align*}
where the second inequality holds by Claim~\ref{clm:rainbow-conn.neighboroods} and the definition of $\ell_3$, and
the last inequality holds by the choice of $b,p$ and $\delta$.

\smallskip

Finally, we consider the edges counted in (iii),
which turns out to be the trickiest part of this proof.
Let us call a vertex $x$ a \emph{bad vertex 
of the first kind} if at some point during the game
$x\notin \NeighsOutL{M}{C_L}{v}
$ and
$e_{B\setminus \text{Sp},c}(x,\NeighsOutR{M}{C_R}{w}
)\geq \delta^{0.5} \left( \frac{n}{b} \right)^{k}$ hold.
Similarly, let us call a vertex
$x$ a \emph{bad vertex 
of the second kind} 
if at some point during the game
$x\notin \NeighsOutR{M}{C_R}{w}
$ and
$e_{B\setminus \text{Sp},c}(x,\NeighsOutL{M}{C_L}{v}
)\geq \delta^{0.5} \left( \frac{n}{b} \right)^{k}$ hold.
That is, these vertices, when added to 
$\NeighsOutL{M}{C_L}{v}
$ or $\NeighsOutR{M}{C_R}{w}
$, can immediately add 
a number of Breaker edges between $\NeighsOutL{M}{C_L}{v}
$ and $\NeighsOutR{M}{C_R}{w}
$ that is proportional to 
the final size of these sets.
We claim that
with probability at least $1-\exp(-\Theta(n/b))$
the number of bad vertices of the first kind
that are added to $\NeighsOutL{M}{C_L}{v}$ 
and the number of bad vertices of the second kind
that are added to $\NeighsOutR{M}{C_R}{w}$ 
can be bounded by 
$\delta^{0.2} \left( \frac{n}{b} \right)^{k}$.
Before proving this, let us explain how 
our desired bound on the number of edges in (iii) follows from that.

\smallskip

An edge $xy$ as described in (iii) can only be added
between $\NeighsOutL{M}{C_L}{v}
$ and $\NeighsOutR{M}{C_R}{w}
$
through moves of type 1,
when $x$ is added to $\NeighsOutL{M}{C_L}{v}
$ or
$y$ is added to $\NeighsOutR{M}{C_R}{w}$ after Breaker claimed $xy$.
In total, among the vertices $x$ that are
added to $\NeighsOutL{M}{C_L}{v}
$,
there are at most 
$\delta^{0.2} \left( \frac{n}{b} \right)^{k}$
vertices that are bad of the first kind,
and each of them cannot contribute more than
$|\NeighsOutR{M}{C_R}{w}
| \leq \left(\frac{n}{b}\right)^{k}$
to the edges counted in (iii),
while all the other vertices added to
$\NeighsOutL{M}{C_L}{v}
$ contribute at most 
$\delta^{0.5}\left(\frac{n}{b}\right)^{k}$
edges counted in (iii).
By symmetry, the same can be said about vertices added to
$\NeighsOutR{M}{C_R}{w}
$, but with bad vertices of the second kind. Therefore, the number of edges 
counted in (iii)
can be bounded from above by
\begin{align*}
& \sum_{j\in [2]}
\left(
\delta^{0.2} \left( \frac{n}{b} \right)^{k}\cdot \left( \frac{n}{b} \right)^{k} +  
\left(\frac{n}{b}\right)^{k} \cdot 
\delta^{0.5} \left(\frac{n}{b}\right)^{k} \right) <
\frac{1}{3} \delta^{0.1}\left(\frac{n}{b}\right)^{2k} .
\end{align*}

\smallskip

Hence, it remains to show that 
with probability at least $1-\exp(-\Theta(n/b))$
the number of bad vertices 
that are added to $\NeighsOutL{M}{C_L}{v}
$
and $\NeighsOutR{M}{C_R}{w}
$
can be bounded as mentioned above.
By symmetry we will only give a full argument for the bad vertices of the first kind, which we simply call \emph{bad} from now on. 
Moreover, for the sequence of colors $C_L=(c_1,\ldots,c_{k})$
let us denote 
$C_L(i)=(c_1,\ldots,c_{i})$ and
$C_L(>i)=(c_{i+1},\ldots,c_{k})$.

Since $|\NeighsOutR{M}{C_R}{w}
|\leq \left( \frac{n}{b} \right)^{k}$
and $d_{B\setminus \text{Sp},c}(u)\leq \delta n$ for every $u\in V$
hold throughout the game, there can be at most
$\delta n \cdot \left( \frac{n}{b} \right)^{k}$
edges in $B\setminus \text{Sp}$ incident with $\NeighsOutR{M}{C_R}{w}$ throughout the game. 
In particular, the number of bad vertices
is bounded by
$$\frac{\left( \frac{n}{b} \right)^{k} \cdot \delta n}{\delta^{0.5} \left( \frac{n}{b} \right)^{k}} \leq \delta^{0.5}n .$$
From this bound we can already conclude the following: whenever Maker exposes and claims an edge at a vertex $z\in \NeighsOutL{M}{C_L(k-1)}{v}
$ in layer $c_{k}$ into $V_L$, which happens at most $\left(\frac{n}{100b} \right)^{k}$ times, the probability of taking an edge $zx$ such that $x$ is bad
is bounded from above by 
$$
\frac{\delta^{0.5}n}{0.86\frac{n}{2}}<4\delta^{0.5},
$$
as there are at least $0.86\frac{n}{2}$ free edges at $z$ in layer $c_k$ into $V_L$ at that moment, by Claim~\ref{clm:rainbow-conn.Breaker-degree}. By applying a Chernoff argument (Lemma~\ref{lemma:Chernoff_modified}) it holds with probability at least $1-\exp(-\Theta( (n/b)^{k} ))$
that Maker claims at most
$$
2\cdot 4\delta^{0.5}\cdot \left( \frac{n}{100b} \right)^{k}
=
8\delta^{0.5}\left( \frac{n}{100b} \right)^{k}
$$
edges $zx$ in layer $c_{k}$ 
with
$z\in \NeighsOutL{M}{C_L(k-1)}{v}
$ and such that
$x\in V_L$ is a bad vertex that then gets added to $\NeighsOutL{M}{C_L}{v}$. From now on, we will condition on this likely event.

\smallskip

In order to bound the number of all other bad vertices that are added to $\NeighsOutL{M}{C_L}{v}$, let us call a vertex $y$ \emph{crazy of level $i$}, with $i\in [k-1]$,
if during the game there appear at least 
$\delta^{0.25} \left( \frac{n}{b} \right)^{k-i}$ 
bad vertices $x$ such that
$x\in \NeighsOutL{M}{C_L(>i)}{y}$. Fix $i\in [k-1]$.
By Claim~\ref{clm:rainbow-conn.expansion}(ii), 
for every vertex $x$ there can at most
$\left( \frac{n}{b} \right)^{k-i}$ vertices $z$ such that
$x\in \NeighsOutL{M}{C_L(>i)}{z}$, and thus, the number of
crazy vertices of level $i$ can be bounded by
$$\frac{\left( \frac{n}{b} \right)^{k-i} \cdot \delta^{0.5} n}{\delta^{0.25} \left( \frac{n}{b} \right)^{k-i}} \leq \delta^{0.25}n .$$

Now, consider any round in layer $c_i$ where $c_i$ is the $i$-th color in the sequence $C_L$. Each time an edge $zy$ of this layer is exposed at a vertex $z\in \NeighsOutL{M}{C_L(i-1)}{v}$,
there are at least $0.86\frac{n}{2}$ free edges at $z$ in layer $c_i$ into $V_L$.
Hence, the probability of taking $zy$ in such a way that $y\in V_L$ is crazy of level $i$ is upper bounded by
$$
\frac{\delta^{0.25}n}{0.86\frac{n}{2}} < 4\delta^{0.25}.
$$
Now, note that 
$|\NeighsOutL{M}{C_L(i-1)}{v}
| \leq \left( \frac{n}{100b} \right)^{i-1}$
holds throughout the game
by Claim~\ref{clm:rainbow-conn.expansion},
and at each vertex we expose 
$2\left( \frac{n}{100b} \right)$ edges for $\Gamma_{1,c_i}$ towards $V_L$ in moves of type 1.
Hence, there are at most $2\left( \frac{n}{100b} \right)^{i}$ rounds (of type 1) in which Maker exposes edges of $\Gamma_{1,c_i}$ at a vertex $z\in \NeighsOutL{M}{C_L(i-1)}{v}$.
By a Chernoff argument (Lemma~\ref{lemma:Chernoff_modified}) it holds with probability at least $1-\exp(-\Theta( (n/b)^i ))$
that Maker claims at most
$$
2\cdot 4\delta^{0.25}\cdot 2\left( \frac{n}{100b} \right)^{i}
=
16\delta^{0.25}\left( \frac{n}{100b} \right)^{i}
$$
directed edges $(z,y)$ in layer $c_i$ in moves of type 1 such that $z\in \NeighsOutL{M}{C_L(i-1)}{v}$
and $y\in V_L$ is crazy of level $i$. After doing a union bound over all $i\in [k-1]$, we 
see that this holds for all $i\in [k-1]$ 
with probability at least
$1-\exp(-\Theta (n/b) )$.
We condition on this to hold from now on.

\smallskip

Then, note that a bad vertex $x\in V_L$  
can only be added to $\NeighsOutL{M}{C_L}{v}
$ by a move of type 1,
in which either, for some $i\in [k-1]$, Maker claims a directed edge $(z,y)$ in layer $c_i$ such that
$z\in \NeighsOutL{M}{C_L(i-1)}{v}
$ and 
$x\in \NeighsOutL{M}{C_L(>i)}{y}
$, or
Maker claims a directed edge $(y,x)$ 
with $y\in \NeighsOutL{M}{C_L(k-1)}{v}
$ and $x$ being bad. 
For the second case, we already know that
this happens at most 
$8\delta^{0.5}\left( \frac{n}{100b} \right)^{k}$
times. So, consider any $i\in [k-1]$. There are at most
$\left( \frac{n}{100b} \right)^{i}$ rounds in which Maker 
claims an edge which was exposed at a vertex 
$z\in \NeighsOutL{M}{C_L(i-1)}{v}
$ towards $V_L$. Among these rounds, there are at most  $16\delta^{0.25}\left( \frac{n}{100b} \right)^{i}$
rounds in which the exposed edge $zy$ leads to a crazy
vertex $y\in V_L$ of level $i$. By Claim~\ref{clm:rainbow-conn.expansion}, each of these rounds can add at most
$|\NeighsOutL{M}{C_L(>i)}{y}
|\leq \left( \frac{n}{100b} \right)^{k-i}$
bad vertices to $\NeighsOutL{M}{C_L}{v}$.
In each of the remaining rounds without a crazy vertex $y$ of level $i$, at most 
$\delta^{0.25} \left( \frac{n}{b} \right)^{k-i}$
bad vertices can be added to $\NeighsOutL{M}{C_L}{v}$.
Therefore, summing up everything, we conclude that
the number of bad vertices 
added to $\NeighsOutL{M}{C_L}{v}
$ 
can be 
upper bounded by
\begin{align*}
 & 8\delta^{0.5} \left( \frac{n}{100b} \right)^{k} + 
\sum_{i\in [k-1]} \left(
16\delta^{0.25}\left( \frac{n}{100b} \right)^{i}\cdot 
\left( \frac{n}{100b} \right)^{k-i}
+
\left( \frac{n}{100b} \right)^{i}\cdot 
\delta^{0.25} \left( \frac{n}{b} \right)^{k-i}
\right) \\
< ~ &
 \delta^{0.5} \left( \frac{n}{b} \right)^{k} + 
\sum_{i\in [k-1]} 
\delta^{0.25} \left(\frac{n}{b} \right)^{k}
<
\frac{1}{3} \delta^{0.1} \left( \frac{n}{b} \right)^{k} .
\end{align*}
This finishes the proof of the claim.
\end{proof}

With our final claim, we get closer to our goal of showing that Maker gets rainbow paths as claimed in Theorem~\ref{thm:rainbow-conn.s.constant}(b).

\begin{claim}\label{clm:rainbow-conn.many.paths}
A.a.s.~for every $v,w,x\in V$ with $x\neq v,w$ and every sequence $C\in \Phi$, Maker 
creates at least $\frac{1}{5} p |\NeighsOutL{M}{C_L}{v}
||\NeighsOutR{M}{C_R}{w}|$
rainbow paths between $v$ and $w$
that do not contain $x$
and such that the edges are colored according to the sequence $C$.
\end{claim}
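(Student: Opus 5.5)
We condition on the likely events of Claims~\ref{clm:rainbow-conn.neighboroods}, \ref{clm:rainbow-conn.degree.bounds} and \ref{clm:rainbow-conn.between.neighborhoods}, and fix $v,w,x$ and $C=(C_L,c,C_R)\in\Phi$. Write $A:=\NeighsOutL{M}{C_L}{v}$ and $B:=\NeighsOutR{M}{C_R}{w}$ for the final sets. For each $a\in A$ we fix one Maker path $P_a$ from $v$ to $a$ with color sequence $C_L$, all of whose internal vertices lie in $V_L$. Likewise, for each $b'\in B$ we fix one Maker path $Q_{b'}$ from $w$ to $b'$ with color sequence $C_R$, internal vertices in $V_R$. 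Since $V_L\cap V_R=\varnothing$, for every edge $ab'$ of color $c$ claimed by Maker with $a\in A$, $b'\in B$, the walk $P_a\,ab'\,Q_{b'}^{-1}$ is a path, apart from the vertices $v,w$ themselves, which may lie in either half. The distinct such edges give distinct paths, with edges colored according to $C$ (rainbow whenever the colors in $C$ are distinct; in any case it is the required color pattern). So it suffices to count Maker edges of color $c$ between $A$ and $B$ and then discard the bad ones.

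\textbf{Counting Maker edges.} Every edge of layer $c$ between $A$ and $B$ that lies in $\Gamma_{2,c}$ is either Maker's or Breaker's. It cannot belong to $\text{Sp}$, because $\text{Sp}$ consists exactly of edges revealed not to be in $\Gamma_2$. Hence
\[
e_{M,c}(A,B)\ \ge\ e_{\Gamma_{2,c}}(A,B)-e_{B\setminus\text{Sp},c}(A,B)\ \ge\ \tfrac12 p|A||B|-\delta^{0.1}(n/b)^{2k},
\]
using Claim~\ref{clm:rainbow-conn.neighboroods}(ii) and Claim~\ref{clm:rainbow-conn.between.neighborhoods}. By Claim~\ref{clm:rainbow-conn.neighboroods}(i),
\[
p|A||B|\ \ge\ \tfrac14 p\,(n/(10^4b))^{2k}=\tfrac14\,10^{-8k}\,p\,(n/b)^{2k}.
\]
Since $p(n/b)^{2k}=\gamma^{-2k}n^{-t}n^{2k/(k+1)}\cdot$ (using $b=\gamma n^{t}$) equals $\gamma^{-2k}(n/b)^{k}\cdot$ something of order $(n/b)^k\cdot p\,(n/b)^{k}$, the main point is the ratio
\[
\frac{\delta^{0.1}(n/b)^{2k}}{p|A||B|}\ \le\ 4\cdot 10^{8k}\,\delta^{0.1}\,p^{-1}=4\cdot10^{8k}\,\delta^{0.1}\,n^{t}.
\]
This step is the main obstacle: the ratio is not small as written, so the Breaker-edge bound must be compared on the correct scale. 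Rechecking the scales, $p(n/b)^{2k}=\gamma^{-2k}n^{-k/(k+1)}n^{2k/(k+1)}=\gamma^{-2k}n^{k/(k+1)}$, while $(n/b)^{2k}$ has the same power of $n$ up to a factor $\gamma^{2k}$ relative to $p(n/b)^{2k}$. Hence $\delta^{0.1}(n/b)^{2k}\le \delta^{0.1}\gamma^{2k}\cdot 4\cdot10^{8k}p|A||B|$. Because $\gamma=10^{-4s}\delta$ is tiny, this is at most $\tfrac1{20}p|A||B|$, leaving $e_{M,c}(A,B)\ge(\tfrac12-\tfrac1{20})p|A||B|$.

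\textbf{Removing paths through $x$.} Paths through $x$ arise in two ways. First, $x$ can be an endpoint $a$ or $b'$, which kills at most $d_{\Gamma_2,c}(x)\le(1+\delta)np$ edges by Claim~\ref{clm:rainbow-conn.random.graphs}(ii). Second, $x$ can be internal to some $P_a$ or $Q_{b'}$. To control this case, choose the paths $P_a$ to avoid $x$ whenever possible, and bound the number of $a\in A$ all of whose paths pass through $x$. For these, $a\in\NeighsOutL{M}{C_L(>i)}{x}$ for some $i$, which by Claim~\ref{clm:rainbow-conn.expansion}(i) gives at most $\sum_i(n/(100b))^{k-i}\le 2(n/(100b))^{k-1}=o(|A|)$ such vertices $a$. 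Each such $a$ carries at most $d_{\Gamma_2,c}(a)\le 2np$ edges, and the same holds on the $B$ side. The total loss is therefore
\[
O\big((n/b)^{k-1}np\big)=O\big(p(n/b)^{k-1}\cdot n\big).
\]
We compare this with $p|A||B|=\Theta(p(n/b)^{2k})$, which amounts to comparing $n$ with $(n/b)^{k+1}=\gamma^{-(k+1)}n$. The loss is thus a $\gamma^{k+1}$-fraction, which is small. Subtracting yields at least $\tfrac15p|A||B|$ paths, and since the claim follows deterministically on the conditioned likely events, a.a.s.\ it holds for all $v,w,x,C$ simultaneously.
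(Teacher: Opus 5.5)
Your proof has a genuine gap in the step \emph{Counting Maker edges}. Subtracting all of $e_{B\setminus\text{Sp},c}(A,B)$ from $e_{\Gamma_{2,c}}(A,B)$ is valid, but it loses a factor $p^{-1}$. With $b=\gamma n^{k/(k+1)}$ and $p=n^{-k/(k+1)}$ you get $(n/b)^{2k}=\gamma^{-2k}n^{2k/(k+1)}$, whereas $p(n/b)^{2k}=\gamma^{-2k}n^{k/(k+1)}$. These differ by $p^{-1}=n^{k/(k+1)}\to\infty$, not by a power of $\gamma$. Your first computation of the ratio, about $10^{8k}\delta^{0.1}p^{-1}$, was correct. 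The ``rechecked'' scaling is an arithmetic slip, and the bound $\frac12p|A||B|-\delta^{0.1}(n/b)^{2k}$ is in fact negative.

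The earlier likely events cannot repair this by themselves. Claim~\ref{clm:rainbow-conn.between.neighborhoods} still allows Breaker to own a constant fraction of the $|A||B|$ layer-$c$ pairs between $A$ and $B$ outside $\text{Sp}$. That is far more than the $\Theta(p|A||B|)$ edges of $\Gamma_{2,c}$ there. So your closing remark, that the claim follows deterministically on the conditioned events, is false.

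The missing idea is the end-of-game coin toss. A Breaker edge outside $\text{Sp}$ was never tossed during the game: a successful toss on a non-Breaker edge makes it Maker's, and a failed one puts it into $\text{Sp}$. Such edges are therefore tossed only after the game, independently with probability $p$. Condition on the final position, which fixes $A$, $B$ and the set of Breaker edges outside $\text{Sp}$ between them. Then $e_{B\cap\Gamma_{2,c}}(A,B)$ is a sum of at most $\delta^{0.1}(n/b)^{2k}$ independent $\mathrm{Ber}(p)$ variables. Lemma~\ref{lemma:Chernoff_modified}(b) gives $e_{B\cap\Gamma_{2,c}}(A,B)\le 2p\delta^{0.1}(n/b)^{2k}$ except with probability $\exp(-\Omega(n^{k/(k+1)}))$. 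This survives a union bound over the $s^{2k+1}n^2$ choices of $(v,w,C)$.

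Compared with $p|A||B|\ge\frac14 10^{-8k}p(n/b)^{2k}$, the loss is now at most an $8\cdot 10^{8k}\delta^{0.1}<\frac1{10}$ fraction, by the choice of $\delta$. This yields $e_{M\cap\Gamma_{2,c}}(A,B)>\frac25p|A||B|$, which is exactly how the paper argues.

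The rest of your argument is fine. Removing the paths through $x$ by bounding the $O((n/b)^{k-1})$ bad endpoints via their $\Gamma_{2,c}$-degree costs only a $10^{O(k)}\gamma^{k+1}$ fraction. This is a harmless variant of the paper's use of Claim~\ref{clm:rainbow-conn.random.graphs}(iii) on $N^+_{bad}$. The same removal argument also handles the possible appearance of $v$ on some $Q_{b'}$, or of $w$ on some $P_a$, which you flagged but left open.
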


\begin{proof}
We condition on the good events of all previous claims. 
First note that the edges counted in Claim~\ref{clm:rainbow-conn.between.neighborhoods} are precisely
the edges of Breaker between
$\NeighsOutL{M}{C_L}{v}
$ and $\NeighsOutR{M}{C_R}{w}
$ in layer $c$ that could belong to
$\Gamma_{2,c}$ at the end of the game
(because all edges belonging to $B\cap \text{Sp}$ will not be part of 
$\Gamma_{2,c}$). Now each of these edges 
counted in Claim~\ref{clm:rainbow-conn.between.neighborhoods} is exposed for $\Gamma_{2,c}$ at the end of the game;
each edge is added to $\Gamma_{2,c}$
with probability $p$.
Applying another Chernoff argument (Lemma~\ref{lemma:Chernoff_modified}) together with a union bound, it follows that a.a.s.~for all $(C_L, c, C_R) \in \Phi$ and for all $v, w \in V$ we have
   $$ e_{B\cap \Gamma_{2,c}}(\NeighsOutL{M}{C_L}{v}
,  \NeighsOutR{M}{C_R}{w}
 ) \leq 2p \cdot \delta^{0.1}\left(\frac{n}{b}\right)^{2k} 
< \frac{1}{10} p |\NeighsOutL{M}{C_L}{v}
||\NeighsOutR{M}{C_R}{w}
| , $$
where the last inequality uses 
property (i) from Claim~\ref{clm:rainbow-conn.neighboroods} and the choice of $\delta$.
Since, at the end of the game,
each edge of $\Gamma_{2,c}$ belongs to either Maker or Breaker, using Claim~\ref{clm:rainbow-conn.neighboroods}(ii)
we conclude that
\begin{align*}
    e_{M\cap \Gamma_{2,c}}(\NeighsOutL{M}{C_L}{v},  \NeighsOutR{M}{C_R}{w} ) 
   & =
   e_{\Gamma_{2,c}}(\NeighsOutL{M}{C_L}{v},  \NeighsOutR{M}{C_R}{w} ) -
   e_{B\cap \Gamma_{2,c}}(\NeighsOutL{M}{C_L}{v},  \NeighsOutR{M}{C_R}{w} ) \\
   & 
    > \frac{2}{5} p |\NeighsOutL{M}{C_L}{v}||\NeighsOutR{M}{C_R}{w}|.
\end{align*}

Now, fix any $v,w,x\in V$ and $C = (C_L,c,C_R)\in \Phi$ as described in the statement of the claim. W.l.o.g.~let $x\in V_L$.
For all vertices $z\in \NeighsOutL{M}{C_L}{v}$
there exists a directed rainbow path $P_z$ in Maker's graph starting in $v$ and ending in $z$
whose edges are colored according to the sequence $C_L$. Set $\operatorname{N}_{bad}^+\subseteq \NeighsOutL{M}{C_L}{v}$ to be the subset of these vertices $z$ for which $P_z$ contains the given vertex $x$. Then
$z\in \operatorname{N}_{bad}^+$ implies
$z\in \NeighsOutL{M}{C_L(>i)}{x}$ for some $i\in [k-1]$ or $z=x$. Hence, by Claim~\ref{clm:rainbow-conn.expansion}(i) we have 
$$
|\operatorname{N}_{bad}^+|
\leq
1+ \sum_{i\in [k-1]}
|\NeighsOutL{M}{C_L(>i)}{x}|
\leq
1+ \sum_{i\in [k-1]}
\left( \frac{n}{100b} \right)^{k-i}
<
\frac{1}{10^3}\left( \frac{n}{10^4b} \right)^k,
$$
where the last expression is chosen so that we can apply Claim~\ref{clm:rainbow-conn.random.graphs}(iii).
Using this claim 
and Claim~\ref{clm:rainbow-conn.neighboroods}(i) we conclude
$$
e_{\Gamma_{2,c}}(\operatorname{N}_{bad}^+,\NeighsOutR{M}{C_R}{w}
) \leq 2p\cdot \frac{1}{10^3}\left( \frac{n}{10^4b} \right)^k
 \cdot |\NeighsOutR{M}{C_R}{w}|
\leq
\frac{1}{100} p\cdot |\NeighsOutL{M}{C_L}{v}||\NeighsOutR{M}{C_R}{w}| .
$$
In particular, we conclude
\begin{align*}
    e_{M\cap \Gamma_{2,c}}(\NeighsOutL{M}{C_L}{v}\setminus \operatorname{N}^+_{bad},  \NeighsOutR{M}{C_R}{w} ) 
   & \geq
    e_{M\cap \Gamma_{2,c}}(\NeighsOutL{M}{C_L}{v},  \NeighsOutR{M}{C_R}{w} )    
 -
 e_{\Gamma_{2,c}}(\operatorname{N}_{bad}^+,\NeighsOutR{M}{C_R}{w}
)   \\ 
& > \frac{1}{5} p |\NeighsOutL{M}{C_L}{v}||\NeighsOutR{M}{C_R}{w}|.
\end{align*}
Now note that every edge $yz\in E_{M\cap \Gamma_{2,c}}(\NeighsOutL{M}{C_L}{v}\setminus \operatorname{N}^+_{bad},  \NeighsOutR{M}{C_R}{w} )$ is contained in at least one rainbow path between $v$ and $w$ in Maker's graph
that does not contain $x$
and which starts with the colors of $C_L$,
then uses color $c$ on the edge $yz$,
and then continues with colors of $C_R$ in reversed order. This proves the claim.
\end{proof}

Based on this last claim, we can finish the proof of Theorem~\ref{thm:rainbow-conn.s.constant}(b). We start with the case when $s$ is odd and
hence $s=2k+1$. Then the above claim already implies that for every $v,w\in V$ and every sequence $C\in \Phi = S^s$, Maker has at least
$$\frac{1}{5} p |\NeighsOutL{M}{C_L}{v}
||\NeighsOutR{M}{C_R}{w}|
= \Theta(n^{2k}b^{-(2k+1)}) = \Theta(n^{s-1}b^{-s})
$$
rainbow paths between $v$ and $w$
with colors according to the sequence $C$.
This is what we wanted to prove.

\smallskip

Hence, let $s$ be even from now on, and thus $s=2k+2$. Let $C'=(c_0,c_1,\ldots,c_{2k+1})\in S^s$ be any sequence of colors. 
Write $C=(c_1,\ldots,c_{2k+1})$. We know that
$\DegrKOut{L+}{M}{c_0}{v}=\frac{n}{100b}$.
For each $v'\in \NeighsOutL{M}{c_0}{v}$
we can apply Claim~\ref{clm:rainbow-conn.many.paths} to ensure
$$\frac{1}{5} p |\NeighsOutL{M}{C_L}{v'}
||\NeighsOutR{M}{C_R}{w}|
= \Theta(n^{2k}b^{-(2k+1)})
$$
rainbow paths between $v'$ and $w$
with colors according to the sequence $C$
such that these paths do not contain $v$.
Extending by the edge $vv'$ in color $c_0$,
and running over all possible
$v'\in \NeighsOutL{M}{c_0}{v}$, we see that Maker claims at least
$$\frac{n}{100b} \cdot \Theta(n^{2k}b^{-(2k+1)})
= \Theta(n^{2k+1}b^{-(2k+2)}) = \Theta(n^{s-1}b^{-s})
$$
rainbow paths between $v$ and $w$ 
with colors according to the sequence $C'$,
as required.
\end{proof}

\subsection{Proof of Theorem~\ref{thm:diameter.new}}

\begin{proof}[Proof of Theorem~\ref{thm:diameter.new}]
    For a Breaker's strategy, note that the proof of Theorem~\ref{thm:rainbow-conn.s.constant}(a) shows that
    Breaker can block all paths of length at most $s$ between a fixed pair of vertices $v,w$ if Maker and Breaker play a $(1:b)$ game on a complete multigraph on $n$ vertices with edge multiplicity $s$,
    if $b\geq 24s n^{1-1/\lceil s/2\rceil}$.
    The same strategy can be applied in the diameter game $\mathcal{D}_{s,n}$. Hence,
    $b_{\mathcal{D}_{s,n}}\leq 24s n^{1-1/\lceil s/2\rceil}$.
    For a Maker's strategy, note that in the proof of Theorem~\ref{thm:rainbow-conn.s.constant}(b), we can choose
    $C\in S^s$ to consist of $s$ times the same color $c$. Then Maker obtains
    paths of length $s$ that are monochromatic in color $c$ between each pair of vertices $v,w$. This strategy can be applied 
    to a game on $K_n$ by identifying $K_n$ with layer $c$ from above (and by imagining further layers that Maker does not play on). Hence, it provides a winning strategy for the diameter game $\mathcal{D}_{s,n}$ when
    $b\leq \gamma n^{1-/\lceil s/2\rceil}$,
    where $\gamma$ is chosen as in the proof of 
    Theorem~\ref{thm:rainbow-conn.s.constant}(b). Therefore,
    $b_{\mathcal{D}_{s,n}}\geq \gamma n^{1-1/\lceil s/2\rceil}$.
\end{proof}

\subsection{Proof of Theorem~\ref{thm:rainbow-conn.s.large}}

\begin{proof}[Proof of Theorem~\ref{thm:rainbow-conn.s.large}]
Let $S$ be a set of $s$ colors,
denote the layers with $G_c$ where $c\in S$
and let $G=\bigcup_{c\in S}G_c$.

\smallskip

\textbf{Upper bound:} 
We start with the upper bound, i.e., Breaker's strategy.
Let $b>(1+\eps)\frac{sn}{\log(n)}$ for an arbitrary
small constant $\eps>0$. Then Breaker can isolate a vertex, i.e., make sure that Maker gets no edge at this vertex. The strategy follows an argument of Chv\'atal and Erd\H{o}s~\cite{chvatal1978biased}: At first, Breaker can play so that after each round $r \leq \frac{b}{4s}$, there is a clique
$A_r$ of size exactly $r$ such that Breaker claims all edges inside $A_r$ in all colors, and Maker has no edge incident with $A_r$ in any color. Indeed, having such a set $A_{r-1}$ after some round $r-1$, Maker can only touch one of its vertices in the next round, reducing the clique by at most one vertex. But then
Breaker can add to his clique up to two vertices $x,y$ that were not touched by Maker yet, by claiming all edges between $x$ and $y$ and $A_{r-1}$. 
Secondly, having reached round $\frac{b}{4s}$, Breaker can isolate one of the vertices of his clique by a Box game argument as follows. For each vertex $v$ in the clique, create a box $B_v$ of all edges incident with $v$ that are still free. Then this leads to $\frac{b}{4s}$ boxes of size $sn(1-o(1))$, and hence, by Theorem~\ref{thm:boxgame}, Breaker (in the role of Maker) can occupy a box $B_v$ completely. That is, he makes sure that Maker gets no edge at the vertex $v$. 

\medskip

\textbf{Lower bound:} 
Let us turn to a suitable Maker's strategy. 
This time, let $b = \gamma \frac{sn}{\log(n)}$ for an arbitrary constant $0<\gamma<\frac{1}{2}$. 
Fix a constant $\delta>0$ such that
$\gamma<\frac{1}{2}-\delta$, and let $\eps'>0$
be given by Lemma~\ref{lem:degree.game.variant}
with input $\delta$, and $\eps\in (0,\eps')$.
Moreover let 
$$t:=\lceil \log^{(2)}(n) \rceil
~~ \text{and} ~~
b':=\left\lceil \frac{t}{t-1} \cdot b \right\rceil .$$
Our goal is to show that Maker can create rainbow paths between every pair of vertices. For this, we partition 
the set $S$ into three sets
$S_1,S_2,S_3$ such that 
$$|S_3|=\frac{s}{\log^{(2)}(n)} ~~
\text{and} ~~
s':=|S_1|=|S_2|=(1-o(1))\frac{s}{2} .$$ 
Let $G_1$ and $G_2$ be the submultigraphs of $G$ 
induced by the edges with colors in $S_1$ and
$S_2$, respectively. Let $H_i$ be a copy of $G_i$,
with $i\in [2]$,
such that $H_1$ and $H_2$ are vertex-disjoint.
The copy of a vertex $v\in V(G)$ in the graph $H_i$
will always be denoted as $v_i$.
Moreover, we set $H=(V(H_1)\cup V(H_2),E(H_1)\cup E(H_2))$.
As in previous proofs,
we let 
$N^+_{M,c}(v)$ denote the set of vertices
$w$ such that Maker has an oriented edge from
$v$ to $w$ in color $c$,
and we set
$d^+_{M,c}(v)=|N^+_{M,c}(v)|$.
We also let 
$d_{B,c}(v)$ denote the number of Breaker's edges at $v$ 
in color $c$, and we define
$d_{M,S_i}(v) := \sum_{c\in S_i} d^+_{M,c}(v)$ and
$d_{B,S_i}(v) := \sum_{c\in S_i} d_{B,c}(v)$
for $i\in [2]$.

\medskip

\textbf{Strategy:} Maker follows two types of moves,
where moves of type $2$ happen only every $t$ rounds.
If at any point in the game Maker is not able to follow the given instructions, she forfeits the game.

\medskip

\underline{Type 1:}
In parallel, Maker simulates 
$\text{MinDeg}^+(H,\frac{\eps s'n}{b'},b',\eps)$.

\smallskip

In the actual game, Maker always considers
sequences of $t$ consecutive rounds 
starting with a round
$1+t\cdot i$ and ending with a round $t(i+1)$ 
for some $i\in \mathbb{N}_0$.

Fix $i\in\mathbb{N}_0$. In the round $1+t\cdot i$, Maker does not make a move of type 1. Let $B_{1}$ be the set of Breaker's edges claimed in that round. Moreover, let
$B_{1}=B_{1,2}\cup B_{1,3}\cup \ldots \cup B_{1,t}$
be an equipartition of $B_1$.
Then Maker plays the next rounds
$j+t\cdot i$ with $j\in \{2,\ldots,t\}$ as follows:
\begin{enumerate}
\item[(1)] Let $B_j$ be the set of edges Breaker claims in round
$j+t\cdot i$. Then Maker sets $B_j' := B_{1,j} \cup B_j$.
She updates $\text{MinDeg}^+$ by letting Breaker claim all the edges in $B_j'$ within one move.
\item[(2)] Then strategy $\mathcal{S}^+$ tells Maker 
at which vertex $v_i\in V(H_i)$ to choose and direct an arbitrary edge. 
\item[(3)] Having $v\in V(G)$ and $i\in [2]$ fixed this way, in the actual game 
Maker does the following:
she takes a color $c\in S_i$ such that Maker has
no outgoing edge at $v$ with color in $S_i$ yet, and
such that $d_{B,c}(v)$ is smallest among all such choices.
She then claims an edge at $v$ 
u.a.r.~among all non-Breaker edges that are incident with $v$ in color $c$, and she orients the edge such that it is outgoing at $v$.
\item[(4)] In the simulated game $\text{MinDeg}^+$, Maker claims the copy of that edge in $H_i$ and also orients it to be outgoing at $v_i$.
\end{enumerate}

\medskip

\underline{Type 2:} 
Moves of type 2 are played only in every round $r\equiv 1$ (mod $t$). Here Maker ensures to get an edge between any two disjoint sets of size $\frac{n}{\log^{(2)}(n)}$
of some color from $S_3$. Details on how Maker can achieve this are given later in the strategy discussion.

\bigskip

\textbf{Strategy discussion:}
We start by analysing the moves of type 1.

\begin{claim}\label{lem:MinDeg.legal}
As long as Maker does not forfeit the game,
in the simulated game $\text{MinDeg}^+$ the following holds:
\begin{enumerate}
\item[(a)] The updates in (1) and (4) for $\text{MinDeg}^+$ are legal moves for Maker and Breaker.
\item[(b)] For every $v_i\in V(H)$, as long as $d_{M\cap H}(v_i)\leq \frac{\eps s'n}{b'}$
 we have $d_{B\cap H}(v_i)<(1-\frac{\eps}{2})\frac{sn}{2}$.
\end{enumerate}
\end{claim}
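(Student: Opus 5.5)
For (a), I will check the two rules of $\text{MinDeg}^+(H,\frac{\eps s'n}{b'},b',\eps)$ separately. Breaker's update in (1): in round $j+ti$ the simulated Breaker gets $B_j'=B_{1,j}\cup B_j$. Here $|B_j|\le b$ and $|B_{1,j}|\le \lceil b/(t-1)\rceil$, so $|B_j'|\le b+\lceil b/(t-1)\rceil\le b'$, using $b'=\lceil \frac{t}{t-1}b\rceil$ and absorbing rounding. The round-$1+ti$ Breaker edges are spread over the next $t-1$ simulated moves. Only copies in $H_1$ or $H_2$ of edges with colors in $S_1\cup S_2$ are recorded, and every real edge has at most one copy in $H$, so the update is a set of at most $b'$ edges of $H$. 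These edges are free in the simulation, because Maker's simulated edges are copies of real Maker edges. Breaker moving first in $\text{MinDeg}^+$ is consistent with round $1+ti$ containing no type 1 move.

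Maker's update in (4): the simulation is a Maker-edge copy of a real edge Maker claims at $v$ in color $c\in S_i$. That real edge is non-Breaker, so its copy is not a simulated Breaker edge. It also cannot already be an outgoing simulated edge at $v_i$: the color $c$ is chosen so that $v$ has no outgoing Maker edge in color $c$ yet, and the pair $(v,c)$ determines the copy. The copy is therefore a legal move for strategy $\mathcal{S}^+$. Since both updates are legal, $\mathcal{S}^+$ is being run on a genuine instance of the game.

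For (b), I will apply Lemma~\ref{lem:degree.game.variant} to $H$. This needs $s'$ parallel edges between every pair in each $H_i$, which holds since $H_i$ is a copy of $G_i$ with $|S_i|=s'$ colors. It also needs $b'\le(1-\delta)\frac{s'n}{\log n}$. This holds because $b'=(1+o(1))\gamma\frac{sn}{\log n}$, $s'=(1-o(1))\frac s2$ and $\gamma<\frac12-\delta$; I would possibly rename $\delta$ to absorb the $o(1)$ terms. The lemma then says $\mathcal{S}^+$ wins, so every vertex $v_i$ reaches $d^+_{M}(v_i)=\frac{\eps s'n}{b'}$ before $d_{B\cap H}(v_i)\ge(1-\eps)d_H(v_i)$. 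Here $d_H(v_i)=s'(n-1)$.

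So, as long as $v_i$ is active, $d_{B\cap H}(v_i)<(1-\eps)s'(n-1)\le(1-\eps)(1-o(1))\frac{sn}{2}<(1-\frac\eps2)\frac{sn}{2}$. The stated condition $d_{M\cap H}(v_i)\le\frac{\eps s'n}{b'}$ counts all Maker edges, incoming ones included, whereas the game tracks out-degree. I will handle this by noting that activity is defined via out-degree and that the bound is monotone in time. The inequality holds at the moment the out-degree reaches its target. Until then the Breaker degree is below the threshold, and $d_{M\cap H}\ge d^+_M$, so the hypothesis forces $d^+_M\le\frac{\eps s'n}{b'}$.

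The main obstacle is the boundary case $d^+_M=\frac{\eps s'n}{b'}$, together with Breaker moves after that time. Here I would use the slack between $(1-\eps)$ and $(1-\frac\eps2)$. After the target is reached, Breaker adds at most $b'$ edges per simulated round. I would either bound these additional edges or read the statement only up to the round at which $v_i$ becomes inactive, which is what the later argument needs.
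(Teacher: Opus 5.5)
Your proposal is correct and follows the same route as the paper. For (a) you use $|B_j'|\le b+\lceil b/(t-1)\rceil=b'$ and the fact that simulated Breaker edges are copies of real Breaker edges. For (b) you apply Lemma~\ref{lem:degree.game.variant} with bias $b'$ to get $d_{B\cap H}(v_i)<(1-\eps)s'(n-1)<(1-\frac{\eps}{2})\frac{sn}{2}$. Your extra checks (the ``not already outgoing'' condition of $\mathcal{S}^+$, and $b'\le(1-\delta)\frac{s'n}{\log(n)}$) fill in details the paper leaves implicit.

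On your boundary worry, only the second option works. Once $v_i$ is inactive, nothing stops Breaker from claiming essentially all remaining edges at $v_i$, so no bound on those later edges is possible. Statement (b) must therefore be read as holding only while $v_i$ is active, i.e.\ with $d^+_{M\cap H}(v_i)<\frac{\eps s'n}{b'}$, up to the moment it reaches the target. That is exactly what Claim~\ref{clm:conn.large.s:degree.bounds} uses.
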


\begin{proof}
We start with (a). Whenever Makes does an update in (1), Breaker obtains
a set $B_j'$ of edges in the game $\text{MinDeg}^+$.
By its definition, the set has size
$|B_j'|=|B_{1,j}|+|B_j|\leq \left\lceil \frac{b}{t-1} \right\rceil + b = b'$. Hence, Breaker's simulated move
is legal. Moreover, every time Breaker receives edges in 
$\text{MinDeg}^+$, i.e., in the multigraph $H$, he already occupies the copies of these edges in $G$. That is, whatever non-Breaker edge Maker claims according to (3) in $G$ at the fixed vertex $v$, its copy in the graph $H$ cannot belong to Breaker at that point. Hence, the update in (4) is legal as well.

For (b), note that the edge claimed and directed in (4)
is an edge that Maker is allowed to choose in 
$\text{MinDeg}^+$ when following the strategy $\mathcal{S}^+$. Therefore, we can apply Lemma~\ref{lem:degree.game.variant} and obtain that,
as long as $d_{M\cap H}(v_i)\leq \frac{\eps s'n}{b'}$ holds,
we have 
$$d_{B\cap H}(v_i)<(1-\eps)d_H(v_i)
=(1-\eps)s'(n-1) < \left(1-\frac{\eps}{2}\right)\frac{sn}{2}
$$
for large enough $n$.
\end{proof}

\begin{claim}\label{clm:conn.large.s:degree.bounds}
As long as Maker does not forfeit the game, we have the following in the actual game:
For every vertex $v$ and every $i\in [2]$, as long as 
$d^+_{M,S_i}(v) < \frac{\eps sn}{4b}$ we have
$d_{B,S_i}(v) < (1-\frac{\eps}{4})\frac{sn}{2}$.
\end{claim}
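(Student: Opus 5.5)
The plan is to transfer Claim~\ref{lem:MinDeg.legal}(b) from the simulated game on $H$ to the actual game on $G$. Two things must be checked: the correspondence between Maker's out-degrees, and the correspondence between Breaker's degrees.

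\textbf{Maker's degrees.} By step (4), every edge Maker claims and orients at $v$ with colour in $S_i$ in the real game is mirrored by exactly one claimed edge oriented out of $v_i$ in $H_i$. Hence $d_{M\cap H}(v_i)$ (counting out-edges) equals $d^+_{M,S_i}(v)$ up to edges claimed by type~2 moves. Type~2 moves only use colours of $S_3$, so they do not contribute to $d^+_{M,S_i}(v)$. Since $b'=(1+o(1))b$ and $s'=(1-o(1))\frac s2$, we have
$$\frac{\eps s'n}{b'}=(1-o(1))\frac{\eps sn}{2b}>\frac{\eps sn}{4b}.$$
So whenever $d^+_{M,S_i}(v)<\frac{\eps sn}{4b}$, the vertex $v_i$ still satisfies $d_{M\cap H}(v_i)\le \frac{\eps s'n}{b'}$, and Claim~\ref{lem:MinDeg.legal}(b) applies. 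It gives
$$d_{B\cap H}(v_i)<\left(1-\tfrac{\eps}{2}\right)\tfrac{sn}{2}.$$

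\textbf{Breaker's degrees.} Every Breaker edge of colour in $S_i$ at $v$ in $G$ is eventually handed to Breaker in $H_i$ through some $B_j'$. The only exception is lag. An edge in $B_1$ (claimed in round $1+ti$) is only passed into the simulation in a later round $j+ti$, as part of $B_{1,j}$. Likewise, the edges of the current round's $B_j$ enter at the next update.

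So at any moment, $d_{B,S_i}(v)\le d_{B\cap H}(v_i)+X$, where $X$ counts Breaker's real edges not yet transferred. These come from the at most one current block of $t$ rounds, so $X\le tb+b$. Since $t=\lceil\log^{(2)} n\rceil$ and $b=\gamma sn/\log n$, we get $X=O(sn\log^{(2)}(n)/\log n)=o(sn)$. Therefore
$$d_{B,S_i}(v)<\left(1-\tfrac{\eps}{2}\right)\tfrac{sn}{2}+o(sn)<\left(1-\tfrac{\eps}{4}\right)\tfrac{sn}{2}$$
for large $n$.

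One further subtlety is the timing of the condition. Claim~\ref{lem:MinDeg.legal}(b) is stated while $d_{M\cap H}(v_i)$ stays below its threshold. Because we have a constant-factor slack in Maker's degree, the hypothesis holds at the time of each update as well.

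\textbf{Main obstacle.} The main difficulty is the bookkeeping of the delayed transfer of Breaker's edges: one must confirm that the untransferred edges at any time are at most $O(tb)$. A fallback is to bound them crudely by the total number of rounds in a block times $b$, which is still $o(sn)$ by the choice of $t$.
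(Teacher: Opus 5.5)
Your proof is correct and follows essentially the paper's argument: you use the exact correspondence of Maker's out-degrees together with $\frac{\eps sn}{4b}<\frac{\eps s'n}{b'}$ to invoke part (b) of the claim on $\text{MinDeg}^+$, and then absorb the $O(tb)=o(sn)$ Breaker edges not yet transferred to the simulation. The paper synchronizes at the end of the last completed block of $t$ rounds, where both games agree exactly and Maker's out-degree was already below the threshold by monotonicity, instead of comparing at the current moment as you do; this is only a bookkeeping difference.
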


\begin{proof}
First let us observe the following: whenever a sequence of rounds $1+t\cdot i,\ldots,t(i+1)$
with $i\in \mathbb{N}_0$ is finished,
Breaker has the same edges in the actual game and the simulated game $\text{MinDeg}^+$, and Maker has the same directed edges in both games (as moves of type 2 do not use directions), if we identify $v_1=v_2$ for every $v\in V(G)$. 

Now, assume that 
$d^+_{M,S_i}(v) < \frac{\eps sn}{4b}$ holds
for any vertex $v$ and any $i\in [2]$
in any round $r$. Let $r'<r$ be the largest integer
such that $r'\equiv 0$ (mod $t$). 
At the end of the $r'$-th round of the actual game, 
we have that in the simulated game 
$\text{MinDeg}^+$, the vertex $v_i$ is active,
since 
$$d^+_{M\cap H}(v_i) = d^+_{M,S_i}(v) < \frac{\eps sn}{4b} <
\frac{\eps s'n}{b'} .
$$
Hence, at that point
$d_{B\cap H}(v_i) < (1-\frac{\eps}{2})\frac{sn}{2}$ by Claim~\ref{lem:MinDeg.legal}.
By the above observation we conclude that
$d_{B,S_i}(v) < (1-\frac{\eps}{2})\frac{sn}{2}$
at the end of round $r'$.
Until the end of round $r$, Breaker can claim at most
$(r-r')b \leq tb$ additional edges, leading to
$$d_{B,S_i}(v) < \left(1-\frac{\eps}{2}\right)\frac{sn}{2} + tb 
< \left(1-\frac{\eps}{4}\right)\frac{sn}{2}
$$
at the end of round $r$, provided $n$ is large enough
and since $tb=o(sn)$.
\end{proof}

\begin{claim}\label{clm:conn.large.s:type.1.works}
Maker can always follow her strategy for the moves
of type 1. At the end, for every vertex $v$ and every $i\in [2]$, we have $d_{M,S_i}(v) = \frac{\eps s'n}{b'}$. Moreover, whenever Maker claims a random edge at a vertex $v$ in color $c\in S_i$ according to part (3) of the strategy, she chooses u.a.r.~from a set of at least $\frac{\eps n}{5}$ edges. 
\end{claim}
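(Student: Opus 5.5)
We plan to prove the claim by combining the winning property of $\mathcal{S}^+$ from Lemma~\ref{lem:degree.game.variant} (through Claim~\ref{lem:MinDeg.legal}) with the Breaker-degree bound of Claim~\ref{clm:conn.large.s:degree.bounds}. The three assertions will be handled in order: that Maker can always execute step (3), that the final out-degree is as stated, and that the random choice is made from a large set.

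\textbf{Step (3) is executable.} Suppose in some round $\mathcal{S}^+$ tells Maker to play at $v_i$. Then $v_i$ is active in $\text{MinDeg}^+$, so $d^+_{M\cap H}(v_i)<\frac{\eps s'n}{b'}$. Since Maker's directed edges coincide in both games, this gives $d^+_{M,S_i}(v)<\frac{\eps s'n}{b'}$.

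Maker takes at most one outgoing edge at $v$ per color of $S_i$. Because $\frac{\eps s'n}{b'}\ll s'$ (as $b'\gg n$ when $s\gg\log n$), fewer than $s'$ colors of $S_i$ have been used at $v$. Hence the set of unused colors $c\in S_i$ is nonempty, and by pigeonhole it has size at least $s'(1-o(1))$.

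\textbf{Estimating the Breaker degree in the chosen color.} The quantity $d^+_{M,S_i}(v)$ is also below $\frac{\eps sn}{4b}$, up to the factor $\frac{s'}{b'}$ versus $\frac{s}{4b}$. If this comparison is slightly off, we will simply adjust the constant in Claim~\ref{clm:conn.large.s:degree.bounds}, which holds with any fixed fraction. That claim then gives $d_{B,S_i}(v)<(1-\frac{\eps}{4})\frac{sn}{2}$.

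Let $U$ be the set of unused colors. Summing Breaker degrees over $U$ gives at most $(1-\frac{\eps}{4})\frac{sn}{2}$. Maker picks the color $c\in U$ minimizing $d_{B,c}(v)$, so
\[
d_{B,c}(v)\le \frac{(1-\eps/4)sn/2}{|U|}\le \frac{(1-\eps/4)sn/2}{(1-o(1))s/2}\le (1-\eps/5)n .
\]
Here we used that the number of used colors is at most $\frac{\eps s'n}{b'}=O(\log n)=o(s')$. Since $d_{B,c}(v)\le(1-\eps/5)n$, the number of non-Breaker edges at $v$ in color $c$ is at least $n-1-(1-\eps/5)n\ge \frac{\eps n}{5}$, after a small adjustment of constants. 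None of these edges is already an outgoing Maker edge at $v$, because color $c$ was unused at $v$. Therefore Maker's u.a.r.\ choice is well defined and is taken from at least $\frac{\eps n}{5}$ edges.

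\textbf{The final out-degree.} In the simulation, $\mathcal{S}^+$ wins $\text{MinDeg}^+(H,\frac{\eps s'n}{b'},b',\eps)$ by Lemma~\ref{lem:degree.game.variant}. The conditions needed are:
\begin{itemize}
\item $b'=(1+o(1))b\le(1-\delta)\frac{s'n}{\log n}$, which holds since $\gamma<\frac12-\delta$ and $s'=(1-o(1))s/2$;
\item the legality of all updates, which is Claim~\ref{lem:MinDeg.legal}(a).
\end{itemize}
Hence every $v_i$ reaches out-degree exactly $\frac{\eps s'n}{b'}$, since Maker stops playing at inactive vertices. Identifying the two games after the game ends, we get $d_{M,S_i}(v)=\frac{\eps s'n}{b'}$.

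\textbf{Main obstacle.} The delicate step is the constant bookkeeping in the Breaker-degree estimate. The threshold $\frac{\eps s'n}{b'}$ must be compared with $\frac{\eps sn}{4b}$ so that Claim~\ref{clm:conn.large.s:degree.bounds} applies. In addition, the averaging over unused colors must lose only a $(1-o(1))$ factor, which relies on $O(\log n)$ used colors out of $s'\gg\log n$.
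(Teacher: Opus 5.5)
Your proposal is correct and follows essentially the paper's argument: the Breaker-degree bound of Claim~\ref{clm:conn.large.s:degree.bounds}, averaging over the $(1-o(1))|S_i|$ unused colors to get $d_{B,c}(v)<(1-\frac{\eps}{5})n$, and the fact that $\mathcal{S}^+$ keeps playing at $v_i$ exactly until it becomes inactive. The threshold mismatch you flag is genuine: since $\frac{\eps s'n}{b'}\approx\frac{\eps sn}{2b}>\frac{\eps sn}{4b}$, your assertion that $d^+_{M,S_i}(v)$ lies below $\frac{\eps sn}{4b}$ is not literally true, and the paper glosses over the same point. Your fix is nonetheless valid, because the proof of that claim only uses that $v_i$ is still active at the end of the previous block of $t$ rounds, which holds whenever $d^+_{M,S_i}(v)<\frac{\eps s'n}{b'}$.
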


\begin{proof}
Assume that $d_{M,S_i}(v) < \frac{\eps s'n}{b'}$ (and hence $v_i$ is active) and that, according to a move of type 1, Maker has to claim an edge incident with $v$.
We have $d_{B,S_i}(v) < (1-\frac{\eps}{4})\frac{sn}{2}$ by the previous claim,
and $d_{M,S_i}(v) = o(|S_i|)$, since $s \gg \log(n)$. Hence, there must be a colour $c\in S_i$
such that Maker has no outgoing edge at $v$ of color $c$ yet and additionally
$$
d_{B,c}(v) \leq \frac{d_{B,S_i}(v)}{(1-o(1))|S_i|} < \left(1-\frac{\eps}{5}\right) n .
$$

According to (3), Maker chooses such a color $c$ for her strategy,
and then claims an edge u.a.r.~from the at least $\frac{\eps n}{5}$ non-Breaker edges at $v$ in color $c$.
The vertex $v$ will be chosen in moves of type 1 
 until $d_{M,S_i}(v) = \frac{\eps s'n}{b'}$ holds.
Once this value is reached, and hence $v_i$ is not active anymore, Maker does not increase this value any further, due to strategy $\mathcal{S}^+$.
\end{proof}

Having proven the above claim, we can now conclude a
suitable expansion property that is likely to hold in Maker's directed graph.

\begin{claim}\label{clm:conn.large.s:expansion}
A.a.s.~the following holds: 
Let $A\subset V(G)$ with $a:=|A|\leq \frac{n}{\log^{(2)}(n)}$,
let $T$ be any rooted tree on at most $2a$ vertices
in $V(G)$, with depth at most $\log(n)/\log^{(5)}(n)$
and $A$ being the leaf set of $T$.
Let $i\in [2]$,
let $f:E(T)\rightarrow S_i$ be an edge-coloring of $T$,
and for each $v\in A$, let
$S_v$ be the set of colors appearing on the unique path in $T$ between the root and $v$.
Let 
$$
N^+_{T,f}(A) :=
\{w\in V(G)\setminus V(T): \exists ~ v\in A:~ 
w\in N_{M,c}^+(v)\text{~ for some ~} c\in S_i\setminus S_v\}
$$
Then
$$
|N^+_{T,f}(A)| > \log^{(4)}(n) \cdot |A| .
$$
\end{claim}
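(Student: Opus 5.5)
We will prove the claim by a first-moment (union bound) argument over all possible "bad" configurations, in the style of the proof of Claim~\ref{clm:rainbow-conn.expansion}(iii). The randomness comes from step (3) of the type 1 moves. By Claim~\ref{clm:conn.large.s:type.1.works}, every outgoing Maker edge at $v$ in colour $c\in S_i$ is chosen u.a.r.\ from a set of at least $\frac{\eps n}{5}$ non-Breaker edges. The choice is made conditionally on the whole history, so the conditional probability that it lands in a prescribed vertex set $W$ is at most $\frac{|W|}{\eps n/5}$.

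Also by Claim~\ref{clm:conn.large.s:type.1.works}, at the end every vertex $v$ has exactly $d:=\frac{\eps s'n}{b'}=\Theta(\log n)$ outgoing Maker edges with colours in $S_i$, all in distinct colours. (Here $d=\Theta(\eps\log(n)/\gamma)$ since $b=\gamma sn/\log n$.) Each leaf $v\in A$ has depth at most $h:=\log(n)/\log^{(5)}(n)$, so $|S_v|\le h$. This is the main obstacle: $h$ may exceed $d$, so we cannot simply discard the excluded colours. A colour in $S_v$ is excluded only if $v$ actually has an outgoing edge of that colour. We will handle this by also union bounding over which of $v$'s $d$ out-edges are excluded. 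We then need a deterministic or probabilistic guarantee that, for most $v\in A$, a constant fraction of the $d$ out-edges avoid $S_v$.

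I would try the following first. Since Maker chooses colours in step (3) greedily by smallest Breaker degree, the colours are adversarial, but $f$ and $T$ are arbitrary anyway. So we do the union bound with the event "at least $d/2$ edges per vertex (on average) are non-excluded and all point into $V(T)\cup W$". Whenever $h\le d/2$ this is automatic. When $h>d/2$, we use that $T$ has at most $2a$ vertices and depth at most $h$, and we count pairs (leaf, excluded out-edge). If this count is large, then by the tree structure the out-neighbourhoods of $A$ are themselves concentrated. If this fails, I would shrink the target bound, noting that we only need a factor $\log^{(4)}(n)$, which is tiny compared to $d$.

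Assume then that the set $A$ carries at least $ad/2$ non-excluded out-edges, and suppose $|N^+_{T,f}(A)|\le \log^{(4)}(n)\,a$. Then all these edges point into $V(T)\cup W$ for some set $W$ of size $a\log^{(4)}(n)$, and $|V(T)\cup W|\le 2a\log^{(4)}(n)$. The probability of this is at most $\left(\frac{10a\log^{(4)}(n)}{\eps n}\right)^{ad/2}$.

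The union bound runs over the following choices:
\begin{itemize}
\item $A$: at most $\binom{n}{a}\le n^a$ choices;
\item the tree $T$: at most $n^{2a}\cdot 4^{2a}$ choices;
\item the colouring $f$: at most $s^{2a}\le n^{2a}$ choices;
\item $W$: at most $\binom{n}{a\log^{(4)} n}\le (en/a)^{a\log^{(4)}(n)}$ choices;
\item the excluded edges: at most $2^{ad}$ choices.
\end{itemize}

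We split into two cases. For $a\le n^{1/2}$, the probability factor is $n^{-\Omega(ad)}$, and since $d=\Theta(\log n)$ with a large constant relative to $5$ (recall $\gamma<1/2$), this beats $n^{5a}$. For $n^{1/2}<a\le n/\log^{(2)}(n)$, the ratio $\frac{10a\log^{(4)}n}{\eps n}\le \frac{10\log^{(4)}n}{\eps\log^{(2)} n}$ raised to the power $ad/2=\Theta(a\log n)$ gives $\exp(-\Theta(a\log n\cdot\log^{(3)} n))$. This dominates $n^{5a}(en/a)^{a\log^{(4)}n}2^{ad}=\exp(O(a\log n)+O(a\log^{(2)}n\log^{(4)}n))$. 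Summing over $a$ gives $o(1)$.

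The delicate points are the constant in $d$ versus the $n^{5a}$ count in the small-$a$ regime, which may require using $d\ge\eps\log n/\gamma$ together with the factor $\frac{a\log^{(4)}n}{n}\le n^{-1/2+o(1)}$, and the excluded-colour bookkeeping described above.
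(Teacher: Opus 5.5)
Your argument is in substance the paper's. It is a union bound over $(A,T,i,f,B)$, using that each out-edge at a leaf $v$ whose colour lies outside $S_v$ lands in $V(T)\cup B$ with conditional probability at most $\frac{10a\log^{(4)}(n)}{\eps n}$.

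However, the ``main obstacle'' you identify does not exist, and the branch you leave as a sketch ($h>d/2$) never occurs for large $n$. Indeed, $h=\log(n)/\log^{(5)}(n)=o(\log n)$, while $d=\frac{\eps s'n}{b'}=(1-o(1))\frac{\eps}{2\gamma}\log(n)$. Hence $|S_v|\le h=o(d)$, and every leaf keeps $(1-o(1))d$ usable out-edges. This is exactly the observation the paper makes.

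With it, the speculative case analysis should simply be deleted. The $2^{ad}$ factor for ``excluded edges'' also becomes unnecessary, though harmless. The reason is that $S_v$ is fixed by $(T,f)$, and Maker fixes the colour in step (3) before drawing the random endpoint. So sequential conditioning bounds the probability by $\frac{10a\log^{(4)}(n)}{\eps n}$ raised to the power $(1-o(1))ad$ directly.

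Two minor remarks on the computation:
\begin{enumerate}
\item The small-$a$ regime is not delicate. There the failure probability is $\exp(-\Omega(a\log^2 n))$, which beats $n^{O(a)}$ regardless of the value of the constant $\eps/(2\gamma)$.
\item In the large-$a$ regime, $(en/a)^{a\log^{(4)}(n)}$ is only $\exp(O(a\log(n)\log^{(4)}(n)))$ when $a$ is near $n^{1/2}$, not $\exp(O(a\log^{(2)}(n)\log^{(4)}(n)))$. This is still dominated by $\exp(-\Theta(a\log(n)\log^{(3)}(n)))$, so your conclusion stands.
\end{enumerate}
In fact no case split is needed at all. The inequality $\log(a)-\log(n)\le-\log^{(3)}(n)$ holds uniformly for $a\le n/\log^{(2)}(n)$, and the paper uses it to run a single estimate over all $a$.
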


\begin{proof}
If the statement fails,
there must be a choice for 
$A,T,i,f$ as described above
and a set $B$ of size
$|B| = \log^{(4)}(n)|A|$ such that 
$N^+_{T,f}(A)\subseteq B$.
Our goal is to show that this happens 
with probability $o(1)$.

For the moment, let $A,T,i,f,B$ be fixed as described.
At each vertex in $A$, Maker claims 
$\frac{\eps s'n}{b'} > \frac{\eps}{3\gamma} \log(n)$ 
outgoing edges with distinct colors in $S_i$, according to (3). Among these colors,
only $|S_v|=o(\log(n))$ belong to $S_v$,
due to the depth of the tree. For each of the other 
$(1-o(1)) \frac{\eps}{3\gamma} \log(n) > \frac{\eps}{2} \log(n)$ colors, 
when Maker claims an outgoing edge at $v$ of this color, the probability of this edge ending in $V(T)\cup B$ is at most
$$
\frac{|V(T)|+|B|}{\eps n / 5} <
\frac{10|A|\log^{(4)}(n)}{\eps n},
$$
because of Claim~\ref{clm:conn.large.s:type.1.works}.
Hence, for $A,T,i,f,B$ fixed as above, 
the probability of having 
$N^+_{T,f}(A)\subseteq B$ is below
$$
\left( \frac{10|A|\log^{(4)}(n)}{\eps n} \right)^{0.5\eps \log(n) |A|} .
$$
Doing a union bound over all possible ways to choose $A,T,i,f,B$,
we see that the desired property fails with probability at most 
\begin{align*}
\sum_{a=1}^{n/\log^{(2)}(n)}
\binom{n}{a} 				
\cdot n^{4a} 			    
\cdot 2						
\cdot \left( \frac{s}{2} \right)^{2a} 	
\cdot \binom{n}{a \log^{(4)}(n)} 	    
\cdot 
\left( \frac{10a\log^{(4)}(n)}{\eps n} \right)^{0.5\eps \log(n) a} ,
\end{align*}
where we use that the number of sets $A$ and $B$ of sizes $a$ and $a \log^{(4)}(n)$ can be bounded with $\binom{n}{a}$ and $\binom{n}{a \log^{(4)}(n)}$ respectively;
the number of rooted trees of size at most $2a$ (in a vertex set of size $n$) can be bounded, using Cayley's formula,
by $\sum_{t=1}^{2a} \binom{n}{t} t^{t-1} < \sum_{t=1}^{2a} n^{2t-1} < n^{4a}$;
and the number of functions $f:E(T)\rightarrow S_i$ equals $|S_i|^{|E(T)|}< \left(\frac{s}{2}\right)^{|E(T)|}$.

This bound on the failure probability can be bounded further as follows:
\begin{align*}
& \sum_{a=1}^{n/\log^{(2)}(n)}
n^{7a} 
\cdot n^{a \log^{(4)}(n)} 	
\cdot 
\left( \frac{10a\log^{(4)}(n)}{\eps n} \right)^{0.5 \eps \log(n) a} \\
\leq &
\sum_{a=1}^{n/\log^{(2)}(n)}
\exp\left( 2a \log^{(4)}(n) \log(n)
+ 0.5 \eps \log(n) a \Big[ \log(a) + \log^{(5)}(n) - \log(n) + O(1) \Big] \right) \\
\leq &
\sum_{a=1}^{n/\log^{(2)}(n)}
\exp\left( 2 a \log^{(4)}(n) \log(n)
+ 0.5\eps \log(n) a \Big[ - \log^{(3)}(n) + \log^{(5)}(n) + O(1) \Big] \right) \\
\leq &
\sum_{a=1}^{n/\log^{(2)}(n)}
\exp\left( - 0.1 \eps \log(n) a \log^{(3)}(n) \right) = o(1) .
\end{align*}
This proves the claim.
\end{proof}

Let us now turn to the moves of type 2. We let
$$
\mathcal{F}
=
\left\{\cup_{c\in S_3} E_c(A,B):~ A\cap B=\varnothing,~
|A|=|B|=\frac{n}{\log^{(2)}(n)}\right\}.
$$
We claim that, by playing only every $t$ rounds a move of type 2, Maker can get an edge in each $F\in \mathcal{F}$.
To do so, we apply Theorem~\ref{thm:beck_criterion}
where Maker takes over the role of $\mathcal{F}$-Breaker,
and where $\mathcal{F}$-Maker has bias $tb$. It holds that
\begin{align*}
\sum_{F\in \mathcal{F}} 2^{-|F|/(tb)}
& \leq
\binom{n}{n/\log^{(2)}(n)}^2 \cdot 2^{-\left( \frac{n}{\log^{(2)}(n)}\right)^2\cdot \frac{s}{\log^{(2)}(n)} / tb} 
\leq 
2^{2n} \cdot 2^{- \frac{n \log(n)}{\gamma (\log^{(2)}(n))^4}} 
= o(1) .
\end{align*}
Hence, by Theorem~\ref{thm:beck_criterion}, we know that
Maker (in the role of Breaker) can claim an element in each $F\in \mathcal{F}$. In particular, she can follow her strategy completely.

\medskip

From now on, let us condition on the likely event of
Claim~\ref{clm:conn.large.s:expansion}.
It remains to explain how we can find rainbow paths
between every pair of vertices $x,y\in V(G)$.

\smallskip

At first we can find a tree $T_x$ with root $x$ such that all root-leaf paths are rainbow in $S_1$ and the number of leaves is at least $\frac{n}{\log^{(2)}(n)}$. 
We do this recursively as follows:
\begin{itemize}
\item Let $T_0$ consist of $x$ only.
\item For $i\leq \frac{\log(n)}{\log^{(5)}(n)}$, 
obtain a new tree $T_i$ from $T_{i-1}$ as follows.
Let $A_{i-1}$ be the leaves of $T_{i-1}$. 
If $|A_{i-1}|\geq \frac{n}{\log^{(2)}(n)}$, let $T_i:=T_{i-1}$.
Otherwise, let the colors in $T_{i-1}$ induce a function $f:E(T_{i-1}) \rightarrow S_1$,
and 
for each vertex $v\in A_{i-1}$, let $S_v$ be the set of colors on the unique $x,v$-path in $T_{i-1}$.
Then set $A_i := N^+_{T,f}(A_{i-1})$ and note that by
Claim~\ref{clm:conn.large.s:expansion}, we get
$|A_i| > \log^{(4)}(n) \cdot |A_{i-1}|$.
Then to the tree $T_{i-1}$
add all the vertices $A_i$, and for each $w\in A_i$ 
add exactly one directed Maker edge $(v,w)$ with $v\in A_{i-1}$ which has a color $c\in S_1\setminus S_v$.
Call the resulting tree $T_i$.
\item Finally, let $T_x := T_{\frac{\log(n)}{\log^{(5)}(n)}}$.
\end{itemize}
By Claim~\ref{clm:conn.large.s:expansion}, we have
$|A_i|\geq (\log^{(4)}(n))^i$ as long as 
$T_i\neq T_{i-1}$, and a quick calculation shows that we reach size $|A_i|\geq \frac{n}{\log^{(2)}(n)}$ before $i = \frac{\log(n)}{\log^{(5)}(n)}$.

\smallskip

Now, analogously we can find a tree $T_y$ with root $y$ such that all root-leaf paths are rainbow in $S_2$ and the number of leaves is at least $\frac{n}{\log^{(2)}(n)}$.
If the trees $T_x$ and $T_y$ intersect in a vertex, we immediately find a rainbow path between $x$ and $y$. Otherwise, by the moves of type 2, we can find an edge with a color from $S_3$ that connects leaves from $T_x$ and $T_y$, hence again leading to a rainbow path between $x$ and $y$.
\end{proof}

\section{Rainbow spanning tree game}
\label{sec:rainbow-tree}

\begin{proof}[Proof of Theorem~\ref{thm:rainbow-spanning}]
We start with the upper bound 
on the threshold bias. Let $b\geq \frac{1}{\log(n-1)}\binom{n}{2}+1$, then Breaker can win as follows. 
Consider the game Box$(b;a_1,\ldots,a_{n-1})$
where each box $F_i$ has size $a_i=\binom{n}{2}$ and contains all edges of the $i$-th layer $G_i$. By Theorem~\ref{thm:boxgame},
Breaker (in the role of Maker) can ensure to occupy a box completely. That is, Maker does not get any edge from this layer and therefore cannot create a rainbow spanning tree.

\bigskip

Hence, it remains to prove the lower bound.
By bias monotonicity we may assume 
$b = \frac{\alpha n^2}{\log(n)}$ with 
a constant $\alpha < \frac{\log(2)}{8}$. 
Identify each layer $G_i$ with a unique color $c_i$, fix $\mathcal{C}:=\{c_1,\ldots,c_{n-1}\}$ as the given color set, and write $G_{c_i}:=G_i$. Moreover, let $\mathcal{P}$ be the set of all unordered partitions (without empty parts) of the vertex set $V$.
By Theorem~\ref{thm:rainbow.tree.equiv} we know that, if Maker fails to win the game,
then by the end of the game there must exist a partition $\{V_1,\ldots,V_k\}\in \mathcal{P}$ such that Maker claims
edges of at most $k-2$ colors between the parts $V_i$, i.e., that have endpoints in different parts of the partition. In order to avoid this to happen, it is enough for Maker to
ensure that for every $k\in \{2,\ldots,n\}$, every partition $\{V_1,\ldots,V_k\}\in \mathcal{P}$ and every set $S\subset \mathcal{C}$ of $n-k+1$ colors,
Maker manages to claim at least one edge
that has a color from $S$ and lies between
two different parts of the partition.

To make this more precise, let us use the following notation:
For any partition $\{V_1,\ldots,V_k\}\in \mathcal{P}$ 
and color $c\in \mathcal{C}$,
let us denote with
$$
E_c(V_1,\ldots,V_k)
=
\{vw\in E(G_c): 
~(\exists i\neq j:~v\in V_i,~ w\in V_j)\}
$$
the set of all edges of color $c$ between the partition,
and let $e(V_1,\ldots,V_k)=|E_c(V_1,\ldots,V_k)|$.
By what we just said, we know that Maker wins the game, if she can get an element in each set of the following family:
$$
\mathcal{F}:=
\left\{
\bigcup_{c\in S} E_c(V_1,\ldots,V_k):~
2\leq k\leq n,~ \{V_1,\ldots,V_k\}\in\mathcal{P},~
S\subset \mathcal{C},~
|S|=n-k+1
\right\} .
$$
By Beck's generalization of the Erd\H{o}s-Selfridge Criterion (Theorem~\ref{thm:beck_criterion}),
and switching the roles of Maker and Breaker,
it is enough to prove that
\begin{equation}\label{eq:Beck:spanning.tree}
\sum_{F\in \mathcal{F}} 2^{-|F|/b}<1    
\end{equation}
holds. In order to show this, we distinguish three kinds of partitions in $\mathcal{P}$:
\begin{align*}
\mathcal{P}_1 & := \Big\{ \{V_1,\ldots,V_k\}\in \mathcal{P}:~ 2\leq k\leq \frac{n}{2},~ \text{and}~ |V_i|\leq \frac{n}{2}~ \text{for every }i\in [k]\Big\}   ,\\
\mathcal{P}_2 & := \Big\{ \{V_1,\ldots,V_k\}\in \mathcal{P}:~ 2\leq k\leq n,~ \text{and}~ |V_i| > \frac{n}{2}~ \text{for some }i\in [k]\Big\} , \\
\mathcal{P}_3 & := \Big\{ \{V_1,\ldots,V_k\}\in \mathcal{P}:~ \frac{n}{2} < k\leq n \Big\} ,
\end{align*}
and for $i\in [3]$ we set
$$
\mathcal{F}_i:=
\left\{
\bigcup_{c\in S} E_s(V_1,\ldots,V_k):~
2\leq k\leq n,~ \{V_1,\ldots,V_k\}\in\mathcal{P}_i,~
S\subset \mathcal{C},~
|S|=n-k+1
\right\} 
$$
so that $\mathcal{F}=\mathcal{F}_1\cup \mathcal{F}_2\cup \mathcal{F}_3$.   

\bigskip

\textbf{Calculation for $\mathcal{F}_1$.}
Let $\{V_1,\ldots,V_k\}\in \mathcal{P}_1$.
Then 
\begin{align*}
e(V_1,\ldots,V_k) 
= \frac{1}{2} \sum_{i\in [k]} e(V_i,V\setminus V_i)
& = \frac{1}{2} \sum_{i\in [k]} |V_i|(n-|V_i|) \\
& = \frac{1}{2}n^2 - \frac{1}{2}\sum_{i\in [k]} |V_i|^2
\geq 
\frac{1}{2}n^2 - \frac{1}{4}n^2
=
\frac{1}{4}n^2 ,
\end{align*}
where the inequality holds,
since a sum $\sum_{i\in k} a_i^2$ with 
constraints $a_i\in [0,\frac{n}{2}]$
and $\sum_{i\in [k]}a_i = n$
gets largest, if two summands $a_i$ equal $\frac{n}{2}$, while the others are $0$.
Therefore, we can estimate as follows:
\begin{align*}
\sum_{F\in \mathcal{F}_1} 2^{-|F|/b} 
& =
\sum_{k=2}^{n/2}~
\sum_{\{V_1,\ldots,V_k\}\in \mathcal{P}_1}~
\sum_{S\in \binom{\mathcal{C}}{n-k+1}}
2^{-|S|\cdot e(V_1,\ldots,V_k)/b} \\
& \leq 
\sum_{k=2}^{n/2}~
\sum_{\{V_1,\ldots,V_k\}\in \mathcal{P}_1}~ \binom{n-1}{n-k+1}
2^{-(n-k+1)\cdot \frac{n^2}{4}/b} 
 \leq 
n^n 2^n 
2^{-\frac{1}{8\alpha} n\log(n)} = o(1)
\end{align*}
where the second inequality uses the choice of $b$ as well as the simple bounds
$|\mathcal{P}_1|\leq |\mathcal{P}|\leq n^n$,
$\binom{n-1}{n-k+1}\leq 2^n$
and $n-k+1\geq \frac{n}{2}$,
and the last estimate uses the choice of $\alpha$.

\medskip

\textbf{Calculation for $\mathcal{F}_2$.}
Let $\{V_1,\ldots,V_k\}\in \mathcal{P}_2$.
We may assume that $|V_1|=n-t$ with $t<\frac{n}{2}$. We then have
$$e(V_1,\ldots,V_k) \geq
e(V_1,V\setminus V_1) = t(n-t)$$
and, since $k\leq t+1$ must hold,
we obtain $n-k+1 \geq n-t > \frac{n}{2}$,
which leads to
$$
\binom{n-1}{n-k+1}
<
\binom{n}{n-k+1}
\leq
\binom{n}{n-t}
=
\binom{n}{t} .
$$
Moreover, we can upper bound the number of all partitions in $\mathcal{P}$ 
with $|V_1|=n-t$
by $\binom{n}{t} t^t$,
as there are $\binom{n}{t}$ options to choose $V_1$ and at most $t^t$ options to partition the remaining $t$ elements.
We can therefore estimate as follows:
\begin{align*}
\sum_{F\in \mathcal{F}_2} 2^{-|F|/b} 
& =
\sum_{k=2}^{n}~
\sum_{t=1}^{n/2}~
\sum_{\small \begin{array}{l}\{V_1,\ldots,V_k\}\in \mathcal{P}_2 \\ \text{s.t.~}|V_1|=n-t \end{array}}~
\sum_{S\in \binom{\mathcal{C}}{n-k+1}}
2^{-|S|\cdot e(V_1,\ldots,V_k)/b} \\
& \leq 
\sum_{t=1}^{n/2}~
\sum_{k=2}^{n}~
\sum_{\small \begin{array}{l}\{V_1,\ldots,V_k\}\in \mathcal{P}_2 \\ \text{s.t.~}|V_1|=n-t \end{array}}~
\binom{n-1}{n-k+1}
2^{-(n-k+1)\cdot e(V_1,\ldots,V_k)/b} \\
& \leq 
\sum_{t=1}^{n/2}~
\binom{n}{t}t^t~
\binom{n}{t}
2^{-t(n-t)^2/b} \\
& \leq 
\sum_{t=1}^{n/\log(n)}~
\binom{n}{t}^2 t^t~
2^{-t(n-t)^2/b}
+
\sum_{t=n/\log(n)}^{n/2}~
\binom{n}{t}^2 t^t~
2^{-t(n-t)^2/b} 
\end{align*}
where the first sum can be estimated
with 
\begin{align*}
    \sum_{t=1}^{n/\log(n)}~
\binom{n}{t}^2 t^t~
2^{-t(n-t)^2/b}
& \leq \sum_{t=1}^{n/\log(n)}~
n^{3t}
2^{-(1-o(1))tn^2/b} \\
& 
\leq \sum_{t=1}^{n/\log(n)}
\exp\left( 3t\log(n) - (1-o(1))\frac{\log(2)}{\alpha} t\log(n) \right) = o(1)
\end{align*}
and the second sum can be estimated with
\begin{align*}
    \sum_{t=n/\log(n)}^{n/2}~
\binom{n}{t}^2 t^t~
2^{-t(n-t)^2/b}
& \leq \sum_{t=n/\log(n)}^{n/2}~
\left(\frac{en}{t}\right)^{2t} t^t~
2^{-tn^2/4b} \\
& \leq \sum_{t=n/\log(n)}^{n/2}~
\left(e \log(n)\right)^{2t} n^t~
2^{-tn^2/4b} \\
& \leq \sum_{t=n/\log(n)}^{n/2}~
\exp\left( (1+o(1))t\log(n) - \frac{\log(2)}{4\alpha} t\log(n) \right) = o(1)
\end{align*}
by the choice of $\alpha$.

\medskip

\textbf{Calculation for $\mathcal{F}_3$.}
For fixed $r<\frac{n}{2}$, we first want to estimate the number of partitions $\{V_1,\ldots,V_k\}\in \mathcal{P}$ with $k=n-r$. For this, observe that each such partition must have at least $n-2r$ parts of size 1. Indeed, if $x_1$ is the number of parts of size $1$ and $x_2$ is the number of parts of size at least $2$, then
$k=x_1+x_2$ and $n\geq x_1 + 2x_2$, and hence
$$
x_1 = k-x_2 = n-r - x_2 \geq x_1+x_2 - r = k - r = n-2r .
$$
It follows that the number of partitions 
with $k=n-r$ and $r<\frac{n}{2}$ can be bounded by  $\binom{n}{2r}\cdot (2r)^{2r}$, since there are $\binom{n}{2r}$ options to choose $n-2r$ parts of size $1$
and at most $(2r)^{2r}$ options to partition the rest, and this number can be further bounded as follows: 
$$\binom{n}{2r}\cdot (2r)^{2r} \leq 
\left( \frac{en}{2r} \right)^{2r} \cdot (2r)^{2r} =
(en)^{2r} = n^{(2+o(1))r} = n^{(2+o(1))(n-k)}.$$

Now let $\{V_1,\ldots,V_k\}\in \mathcal{P}_3$.
Then there exists $r<\frac{n}{2}$ such that $k=n-r$. The size of $e(V_1,\ldots,V_k)$
is minimal if all but one part have size $1$,
and the remaining part has size
$n-k+1=r+1$. This leads to 
$$
e(V_1,\ldots,V_k) \geq
\binom{n}{2} - \binom{r+1}{2}
\geq 
\left( \frac{3}{8} - o(1) \right)n^2
$$
Thus, we can do the following estimate:
\begin{align*}
\sum_{F\in \mathcal{F}_3} 2^{-|F|/b} 
& \leq 
\sum_{k=n/2}^{n}~
\sum_{\{V_1,\ldots,V_k\}\in \mathcal{P}_3}~
\sum_{S\in \binom{\mathcal{C}}{n-k+1}}
2^{-|S|\cdot e(V_1,\ldots,V_k)/b} \\
& \leq 
\sum_{k=n/2}^{n}~
n^{(2+o(1))(n-k)}~
n^{n-k+1}
2^{-(n-k+1)\cdot ( \frac{3}{8} - o(1))n^2/b} \\
& \leq 
\sum_{k=n/2}^{n}~
\exp\Big( (3+o(1))(n-k+1)\log(n) - \frac{\log(2)}{\alpha}\left ( \frac{3}{8} - o(1)\right)(n-k+1)\log(n) \Big) \\
& = o(1) .
\end{align*}

\smallskip

Now, putting everything together we see that
\eqref{eq:Beck:spanning.tree} holds
for large enough $n$. This proves the theorem.
\end{proof}

\section{Concluding remarks}
\label{sec:concluding}

\subsection{Optimality of our results}

For the rainbow-connectivity game
$\mathcal{C}_{s,n}$,
we determine the order of the threshold bias,
provided that $s$ is a constant (Theorem~\ref{thm:rainbow-conn.s.constant}),
or $s=\omega(\log(n))$ holds
(Theorem~\ref{thm:rainbow-conn.s.large}).
Moreover, all calculations in the proof 
of Theorem~\ref{thm:rainbow-conn.s.constant}
go through as long as
$s=o(\sqrt{\log(n)})$,
and the proof of Lemma~\ref{clm:rainbow-conn.neighboroods} is the only place that actually requires this bound on $s$. We believe that a more thorough analysis of our strategy could improve this bound on $s$. Nevertheless, if $\frac{\log(n)}{\log^{(2)}(n)} \ll s \ll \log(n)$, new arguments will be needed, as 
many estimates in the proof of Theorem~\ref{thm:rainbow-conn.s.constant}
do not hold in this range. Accordingly we pose the following problem.

\begin{problem}
Determine the threshold bias of
the rainbow-connectivity game 
$\mathcal{C}_{s,n}$ 
when $s$ is in the range
$\sqrt{\log(n)} \ll s \ll \log(n)$.
\end{problem}

In the case when the number of colors satisfies
$s=\omega(\log(n))$, we even determine
the threshold bias of $\mathcal{C}_{s,n}$ up to a multiplicative factor $2$. We conjecture the following.

\begin{conjecture}
Let $s = \omega(\log(n))$. 
Then $b_{\mathcal{C}_{s,n}} = (1+o(1))\frac{sn}{\log(n)}$. 
\end{conjecture}

Similarly, we believe that our upper bound
in Theorem~\ref{thm:rainbow-spanning} for the rainbow-spanning tree game is asymptotically tight. 

\begin{conjecture}
We have $b_{\mathcal{RS}_{n}} = (1+o(1))\frac{n^2}{\log(n)}$.
\end{conjecture}

Finally, we note that the number of paths stated in Theorem~\ref{thm:rainbow-conn.s.constant}(b) is optimal up to the constant factor. This can be seen as follows:
Let $xy$ be the first edge that Maker
claims, and let $v,w\in V\setminus \{x,y\}$. Then define
$\mathcal{F}=\mathcal{F}_1\cup \mathcal{F}_2$ with
\begin{align*}
\mathcal{F}_1 & :=
\left\{
E(P):~P~\text{ is a $v,w$-path of length $s$ in $G$ that does not contain the edge $xy$}
\right\} , \\
\mathcal{F}_2 & :=
\left\{
E(P)\setminus\{xy\}:~P~\text{ is a $v,w$-path of length $s$ in $G$ that contains the edge $xy$}
\right\} .
\end{align*}
We can assume that Breaker from now on plays as first player. From the proof of Beck's generalization of the Erd\H{o}s-Selfridge Criterion~\cite{beck1982remarks} it follows that Breaker can ensure 
that Maker occupies at most
\begin{align*}
\sum_{F\in \mathcal{F}} (1+b)^{-|F|}
& = 
\sum_{F\in \mathcal{F}_1} (1+b)^{-|F|}
+
\sum_{F\in \mathcal{F}_2} (1+b)^{-|F|} \\
& =
O(n^{s-1} b^{-s}) +
O(n^{s-3} b^{-(s-1)}) =
O(n^{s-1} b^{-s}).
\end{align*}
winning sets of $\mathcal{F}$, i.e., $v,w$-paths of length $s$.

\subsection{More rainbow structures}

After considering rainbow spanning trees,
a natural next step is to look at games in which Maker's goal is to claim a 
perfect matching, or a Hamilton cycle, that is rainbow. 

\begin{definition}[Rainbow perfect matching game $\mathcal{RP}_{n}$] 
The $(1:b)$ game $\mathcal{RP}_{n}$ is played as follows.
Given copies $G_1,\ldots,G_{\frac{n}{2}}$ of the complete graph
$K_n$ on vertex set $V=[n]$, where $n$ is even, Maker and Breaker alternatingly claim edges of  the board $X=\bigcup_{i\in [\frac{n}{2}]}E(G_i)$ according to the given bias. Maker wins iff she manages to claim
a rainbow perfect matching.
\end{definition}

\begin{definition}[Rainbow Hamiltonicity game $\mathcal{RH}_{n}$] 
The $(1:b)$ game $\mathcal{RH}_{n}$ is played as follows.
Given copies $G_1,\ldots,G_{n}$ of the complete graph
$K_n$ on vertex set $V=[n]$, where $n$ is even, Maker and Breaker alternatingly claim edges of  the board $X=\bigcup_{i\in [n]}E(G_i)$ according to the given bias. Maker wins iff she manages to claim
a rainbow Hamilton cycle.
\end{definition}

We conjecture the following.

\begin{conjecture}
We have $b_{\mathcal{RP}_{n}} = (\frac{1}{2}+o(1))\frac{n^2}{\log(n)}$.
\end{conjecture}

\begin{conjecture}
We have $b_{\mathcal{RH}_{n}} = (1+o(1))\frac{n^2}{\log(n)}$.    
\end{conjecture}

\medskip

\section*{Acknowledgment}
The research of Juri Barkey, Bruno Borchardt and Dennis Clemens is supported financially by the Federal Ministry of Research, Technology and Space through the DAAD project 57749672.
Milica Maksimovi\'{c}, Mirjana Mikala\v{c}ki and Milo\v{s} Stojakovi\'{c} are supported by the Ministry of Science, Technological Development and Innovation of the Republic of Serbia through bilateral Serbia-Germany cooperation 001545243 2025 13440 003 000 000 001 03 005, and through grants 451-03-33/2026-03/200125 \& 451-03-34/2026-03/200125. Milo\v{s} Stojakovi\'{c} is supported by the Science Fund of the Republic of Serbia, Grant \#7462: Graphs in Space and Time: Graph Embeddings for Machine Learning in Complex Dynamical Systems (TIGRA).

\begin{center}
    \includegraphics[height=4cm]{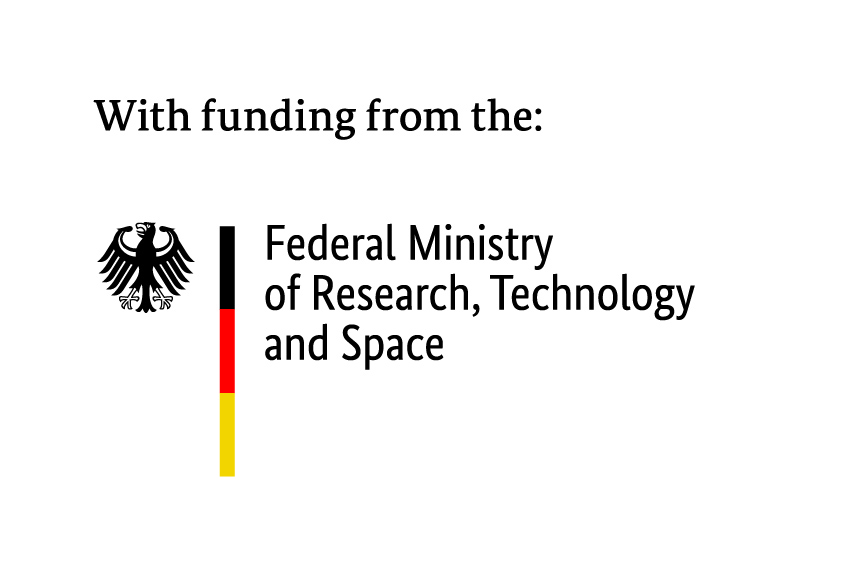}
    \hspace{0.5cm}
    \includegraphics[height=3cm]{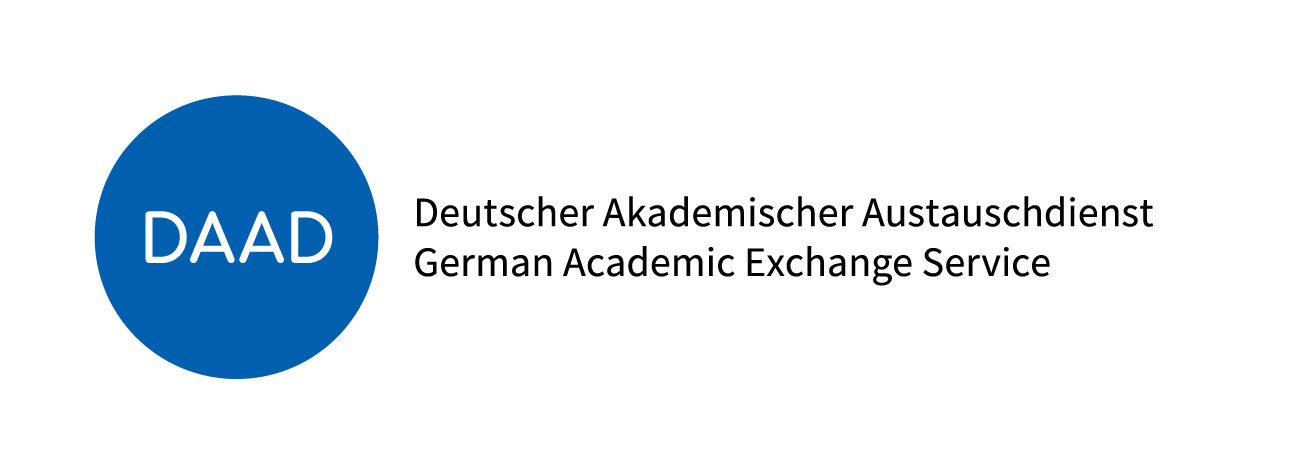}
\end{center}

\bibliographystyle{amsplain}
\bibliography{references}

@article{erdos1973combinatorial,
  title={On a combinatorial game},
  author={Erd{\H{o}}s, Paul and Selfridge, John L.},
  journal={Journal of Combinatorial Theory, Series A},
  volume={14},
  number={3},
  pages={298--301},
  year={1973},
  publisher={Elsevier}
}

@article{aharoni2020rainbow,
  title={A rainbow version of {M}antel’s {T}heorem},
  author={Aharoni, Ron and DeVos, Matt and de la Maza, Sebasti{\'a}n G.~H. and Montejano, Amanda and {\v{S}}{\'a}mal, Robert},
  journal={Advances in Combinatorics},
  year={2020}
}

@article{allen2017making,
  title={Making spanning graphs},
  author={Allen, Peter and B{\"o}ttcher, Julia and Kohayakawa, Yoshiharu and Naves, Humberto and Person, Yury},
  journal={arXiv preprint arXiv:1711.05311},
  year={2017}
}

@book{alon2016probabilistic,
  title={The probabilistic method},
  author={Alon, Noga and Spencer, Joel H.},
  year={2016},
  publisher={John Wiley \& Sons}
}

@article{anastos2023robust,
  title={Robust {H}amiltonicity in families of {D}irac graphs},
  author={Anastos, Michael and Chakraborti, Debsoumya},
  journal={arXiv preprint arXiv:2309.12607},
  year={2023}
}

@article{balogh2009diameter,
  title={The diameter game},
  author={Balogh, J{\'o}zsef and Martin, Ryan and Pluh{\'a}r, Andr{\'a}s},
  journal={Random Structures \& Algorithms},
  volume={35},
  number={3},
  pages={369--389},
  year={2009},
  publisher={Wiley Online Library}
}

@article{beck1982remarks,
  title={Remarks on positional games. I},
  author={Beck, J{\'o}zsef},
  journal={Acta Mathematica Hungarica},
  volume={40},
  number={1-2},
  pages={65--71},
  year={1982},
  publisher={Akad{\'e}miai Kiad{\'o}, co-published with Springer Science+ Business Media BV~…}
}

@article{bednarska2000biased,
  title={Biased positional games for which random strategies are nearly optimal},
  author={Bednarska, Ma{\l}gorzata and {\L}uczak, Tomasz},
  journal={Combinatorica},
  volume={20},
  number={4},
  pages={477--488},
  year={2000},
  publisher={Springer}
}

@article{bradshaw2021transversals,
  title={Transversals and {B}ipancyclicity in {B}ipartite {G}raph {F}amilies},
  author={Bradshaw, Peter},
  journal={The Electronic Journal of Combinatorics},
  pages={P4--25},
  year={2021}
}

@article{bradshaw2022one,
  title={From one to many rainbow {H}amiltonian cycles},
  author={Bradshaw, Peter and Halasz, Kevin and Stacho, Ladislav},
  journal={Graphs and Combinatorics},
  volume={38},
  number={6},
  pages={188},
  year={2022},
  publisher={Springer}
}

@inproceedings{bradshaw2021rainbow,
  title={A rainbow connectivity threshold for random graph families},
  author={Bradshaw, Peter and Mohar, Bojan},
  booktitle={Extended Abstracts EuroComb 2021: European Conference on Combinatorics, Graph Theory and Applications},
  pages={842--847},
  year={2021},
  organization={Springer}
}

@book{brualdi1991combinatorial,
	place={Cambridge}, 
	series={Encyclopedia of Mathematics and its Applications}, 
	title={Combinatorial Matrix Theory}, 
	publisher={Cambridge University Press}, 
	author={Brualdi, Richard A. and Ryser, Herbert J.}, 
	year={1991}, 
	collection={Encyclopedia of Mathematics and its Applications}
}

@article{chakraborti2023bandwidth,
  title={A bandwidth theorem for graph transversals},
  author={Chakraborti, Debsoumya and Im, Seonghyuk and Kim, Jaehoon and Liu, Hong},
  journal={arXiv preprint arXiv:2302.09637},
  year={2023}
}

@article{chartrand2008rainbow,
  title={Rainbow connection in graphs},
  author={Chartrand, Gary and Johns, Garry L. and McKeon, Kathleen A. and Zhang, Ping},
  journal={Mathematica bohemica},
  volume={133},
  number={1},
  pages={85--98},
  year={2008},
  publisher={Institute of Mathematics, Academy of Sciences of the Czech Republic}
}

@article{cheng2025transversal,
  title={Transversal {H}amilton {C}ycle in {H}ypergraph {S}ystems},
  author={Cheng, Yangyang and Han, Jie and Wang, Bin and Wang, Guanghui and Yang, Donglei},
  journal={SIAM Journal on Discrete Mathematics},
  volume={39},
  number={1},
  pages={55--74},
  year={2025},
  publisher={SIAM}
}

@article{cheng2021rainbow,
  title={Rainbow Pancyclicity in Graph Systems},
  author={Cheng, Yangyang and Wang, Guanghui and Zhao, Yi},
  journal={The Electronic Journal of Combinatorics},
  pages={P3--24},
  year={2021}
}

@incollection{chvatal1978biased,
  title={Biased positional games},
  author={Chv{\'a}tal, Va{\v{s}}ek and Erd{\H o}s, Paul},
  booktitle={Annals of Discrete Mathematics},
  volume={2},
  pages={221--229},
  year={1978},
  publisher={Elsevier}
}

@article{clemens2025maker,
  title={Maker playing against an invisible {B}reaker},
  author={Clemens, Dennis and Hamann, Fabian and Mikala{\v{c}}ki, Mirjana and Mogge, Yannick and Stojakovi{\'c}, Milo{\v{s}}},
  journal={arXiv preprint arXiv:2507.22519},
  year={2025}
}

@inproceedings{edmonds2003submodular,
  title={Submodular functions, matroids, and certain polyhedra},
  author={Edmonds, Jack},
  booktitle={Combinatorial Optimization—Eureka, You Shrink! Papers Dedicated to Jack Edmonds 5th International Workshop Aussois, France, March 5--9, 2001 Revised Papers},
  pages={11--26},
  year={2003},
  organization={Springer}
}

@article{ferber2022dirac,
  title={Dirac-type problem of rainbow matchings and {H}amilton cycles in random graphs},
  author={Ferber, Asaf and Han, Jie and Mao, Dingjia},
  journal={arXiv preprint arXiv:2211.05477},
  year={2022}
}

@article{ferber2015generating,
  title={Generating random graphs in biased {M}aker-{B}reaker games},
  author={Ferber, Asaf and Krivelevich, Michael and Naves, Humberto},
  journal={Random Structures \& Algorithms},
  volume={47},
  number={4},
  pages={615--634},
  year={2015},
  publisher={Wiley Online Library}
}

@book{frieze2015introduction,
  title={Introduction to random graphs},
  author={Frieze, Alan and Karo{\'n}ski, Micha{\l}},
  year={2015},
  publisher={Cambridge University Press}
}

@article{gebauer2009asymptotic,
  title={Asymptotic random graph intuition for the biased connectivity game},
  author={Gebauer, Heidi and Szab{\'o}, Tibor},
  journal={Random Structures \& Algorithms},
  volume={35},
  number={4},
  pages={431--443},
  year={2009},
  publisher={Wiley Online Library}
}

@article{gupta2023general,
  title={A general approach to transversal versions of {D}irac-type theorems},
  author={Gupta, Pranshu and Hamann, Fabian and M{\"u}yesser, Alp and Parczyk, Olaf and Sgueglia, Amedeo},
  journal={Bulletin of the London Mathematical Society},
  volume={55},
  number={6},
  pages={2817--2839},
  year={2023},
  publisher={Wiley Online Library}
}

@article{hales1963regularity,
title={Regularity and positional games},
author={Hales, Alfred W.  and Jewett, Robert I.},
journal={Transactions of the American Mathematical Society},
volume={106},
year={1963}, 
pages={222--229}
}

@article{heath2025universality,
  title={Universality for transversal powers of {H}amilton cycles},
  author={Heath, Emily and Hyde, Joseph and Morrison, Natasha and Ogden, Shannon},
  journal={arXiv preprint arXiv:2510.18163},
  year={2025}
}

@article{stojakovic2005positional,
  title={Positional games on random graphs},
  author={Stojakovi{\'c}, Milo{\v{s}} and Szab{\'o}, Tibor},
  journal={Random Structures \& Algorithms},
  volume={26},
  number={1-2},
  pages={204--223},
  year={2005},
  publisher={Wiley Online Library}
}

@article{nenadov2016threshold,
  title={On the threshold for the {M}aker-{B}reaker {$H$}-game},
  author={Nenadov, Rajko and Steger, Angelika and Stojakovi{\'c}, Milo{\v{s}}},
  journal={Random Structures \& Algorithms},
  volume={49},
  number={3},
  pages={558--578},
  year={2016},
  publisher={Wiley Online Library}
}

@article{hefetz2008planarity,
  title={Planarity, colorability, and minor games},
  author={Hefetz, Dan and Krivelevich, Michael and Stojakovi{\'c}, Milo{\v{s}} and Szab{\'o}, Tibor},
  journal={SIAM Journal on Discrete Mathematics},
  volume={22},
  number={1},
  pages={194--212},
  year={2008},
  publisher={SIAM}
}

@book{hefetz2014positional,
  title={Positional games},
  author={Hefetz, Dan and Krivelevich, Michael and Stojakovi{\'c}, Milo{\v{s}} and Szab{\'o}, Tibor},
  volume={44},
  year={2014},
  publisher={Springer}
}

@book{janson2011random,
	title={Random graphs},
	author={Janson, Svante and Ruci\'nski, Andrzej and {\L}uczak, Tomasz},
	year={2011},
	publisher={John Wiley \& Sons}
}

@article{joos2020rainbow,
  title={On a rainbow version of {D}irac's theorem},
  author={Joos, Felix and Kim, Jaehoon},
  journal={Bulletin of the London Mathematical Society},
  volume={52},
  number={3},
  pages={498--504},
  year={2020},
  publisher={Wiley Online Library}
}

@article{krivelevich2014positional,
  title={Positional games},
  author={Krivelevich, Michael},
  journal={arXiv preprint arXiv:1404.2731},
  year={2014}
}

@article{krivelevich2011critical,
  title={The critical bias for the {H}amiltonicity game is 
  {$(1+ o(1))n/\ln n$}},
  author={Krivelevich, Michael},
  journal={Journal of the American Mathematical Society},
  volume={24},
  number={1},
  pages={125--131},
  year={2011}
}

@article{liebenau2022threshold,
  title={The threshold bias of the clique-factor game},
  author={Liebenau, Anita and Nenadov, Rajko},
  journal={Journal of Combinatorial Theory, Series B},
  volume={152},
  pages={221--247},
  year={2022},
  publisher={Elsevier}
}

@article{montgomery2023proof,
  title={A proof of the {R}yser-{B}rualdi-{S}tein conjecture for large even $ n$},
  author={Montgomery, Richard},
  journal={arXiv preprint arXiv:2310.19779},
  year={2023}
}

@article{nenadov2023probabilistic,
  title={Probabilistic intuition holds for a class of small subgraph games},
  author={Nenadov, Rajko},
  journal={Proceedings of the American Mathematical Society},
  volume={151},
  number={04},
  pages={1495--1501},
  year={2023}
}

@article{ryser1967neuere,
  title={Neuere {P}robleme der {K}ombinatorik},
  author={Ryser, Herbert J.},
  journal={Vortr{\"a}ge {\"u}ber Kombinatorik, Oberwolfach},
  volume={69},
  number={91},
  pages={35},
  year={1967},
  publisher={Matematisches Forschungsinstitute Oberwolfach, Germany}
}

@book{schrijver2003combinatorial,
  title={Combinatorial optimization: polyhedra and efficiency},
  author={Schrijver, Alexander},
  volume={24},
  number={2},
  year={2003},
  publisher={Springer}
}

@article{stein1975transversals,
  title={Transversals of {L}atin squares and their generalizations},
  author={Stein, Sherman K.},
  journal={Pacific Journal of Mathematics},
  volume={61},
  number={2},
  pages={567--575},
  year={1975}
}

@article{sun2024transversal,
  title={Transversal structures in graph systems: A survey},
  author={Sun, Wanting and Wang, Guanghui and Wei, Lan},
  journal={arXiv preprint arXiv:2412.01121},
  year={2024}
}

\appendix
\section{Minimum degree game}\label{sec:min.deg}

\begin{proof}[Sketch of proof of Lemma~\ref{lem:degree.game.variant}]
The proof is almost the same as for Theorem 1.2 in~\cite{gebauer2009asymptotic}. However, we follow the structure and notation given in~\cite{hefetz2014positional}.

W.l.o.g.~let $\delta<0.5$. We set $\eps'=0.1\delta$,
and let $\eps\in (0,\eps')$.
Whenever necessary we assume that $n$ is large enough.
Suppose for contradiction that Breaker has a strategy
to ensure that at some point in the game, say in a round $g$,
there appears a vertex $v_g$ with
$d_M^+(v_g)<\frac{\eps s'n}{b}$ and
$d_B(v_g)\geq (1-\eps) d_H(v)$. 
Let $v_1,\ldots,v_{g-1}$ be the vertices chosen by Maker
in the previous rounds according to strategy~$\mathcal{S}^+$.
Let $J_i=\{v_{i+1},\ldots,v_g\}$ for every $0\leq i\leq g-1$.
Before Maker's move in round $i$, let
$$
\overline{\text{dang}}(M_i) = \frac{\sum_{v\in J_{i-1}} \text{dang}(v)}{|J_{i-1}|}
$$
and before Breaker's $i$-th move, let
$$
\overline{\text{dang}}(B_i) = \frac{\sum_{v\in J_{i}} \text{dang}(v)}{|J_{i}|} .
$$
By assumption, after Maker's $(g-1)$-th move,
we must have $d_B(v_g)\geq (1-\eps)d_H(v_g)-b$ and 
$d_M^+(v_g)< \frac{\eps s'n}{b}$, leading to
$$
\overline{\text{dang}}(M_g) 
\geq (1-\eps)s'(n-1) - b - 2b\cdot \frac{\eps s'n}{b}
> (1-4\eps)s'n .
$$

By following the proofs of Lemma~5.3.3, Lemma~5.3.4 and Corollary~5.3.5 in~\cite{hefetz2014positional} line by line, the following can be shown. 

\begin{claim} \label{clm:mindeg.analysis}
For every $0\leq i\leq g-1$, we have
\begin{enumerate}
\item[(a)] $\overline{\text{dang}}(M_i) \geq \overline{\text{dang}}(M_{i+1})$ if $J_i=J_{i-1}$,
\item[(b)] $\overline{\text{dang}}(M_i) \geq \overline{\text{dang}}(M_{i+1}) - \min\left\{ \frac{2b}{|J_i|} , \frac{b+a(i)-a(i-1)}{|J_i|} +  s' \right\}$
\end{enumerate}
where $a(i)$ is the number of Breaker edges in $J_i$ that are claimed in the first $i$ rounds.
\end{claim}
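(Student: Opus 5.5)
Both parts are proved by tracking how the average danger over $J_{i-1}$ and $J_i$ changes across one Maker move and one Breaker move. The structure is the same as Lemmas~5.3.3--5.3.4 of~\cite{hefetz2014positional}; the only differences are that here $\text{dang}(v)=d_B(v)-2b\cdot d_M^+(v)$, and that $H$ is a multigraph with edge multiplicity $s'$. First I split the transition from $\overline{\text{dang}}(M_i)$ to $\overline{\text{dang}}(M_{i+1})$ into Maker's $i$-th move and Breaker's $i$-th move. For Maker's move, strategy $\mathcal{S}^+$ picks $v_i$ of maximal danger among active vertices, and all vertices of $J_{i-1}$ are active at that time. Hence $\text{dang}(v_i)\geq \overline{\text{dang}}(M_i)$. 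Maker's move decreases $\text{dang}(v_i)$ by $2b$ and leaves every other danger unchanged, since only outdegrees enter the danger. So if $J_i=J_{i-1}\setminus\{v_i\}$, removing a vertex of at least average danger gives $\overline{\text{dang}}(B_i)\leq \overline{\text{dang}}(M_i)$. In the degenerate case $J_i=J_{i-1}$, the average drops by $2b/|J_i|$.

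For Breaker's move, his $b$ edges raise $\sum_{v\in J_i}\text{dang}(v)$ by the number of edge-endpoints landing in $J_i$. This is at most $2b$, which gives the first term $\frac{2b}{|J_i|}$ of the minimum in (b). For the second term, I separate Breaker edges with both endpoints in $J_i$ from those with exactly one endpoint there. Edges with both endpoints in $J_i$ contribute $2$ each and are counted by the increment $a(i)-a(i-1)$. Edges with one endpoint contribute $1$ each, at most $b$ in total. This yields an increase of at most $b+a(i)-a(i-1)$.

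The extra $+s'$ term is needed for the following reason. In the telescoping later, one uses that $a(i)$ is at most the number of edges of $H$ inside $J_i$, which is at most $s'\binom{|J_i|}{2}$. When passing between $J_{i-1}$ and $J_i$, the accounting of edges incident to the removed vertex $v_i$ within $J_{i-1}$ may cost up to $s'(|J_{i-1}|-1)$ in the numerator. Divided by $|J_i|$, this is at most $s'$, up to rounding. Combining the two bounds shows that $\overline{\text{dang}}(M_{i+1})\leq \overline{\text{dang}}(B_i)+\min\{\cdot\}\leq\overline{\text{dang}}(M_i)+\min\{\cdot\}$, which is (b).

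For (a), when $J_i=J_{i-1}$ the vertex $v_i$ equals $v_g$ or some vertex already in $J_i$. Then Maker's decrease of $2b$ in a single danger value exactly offsets Breaker's at most $2b$ increase in the sum. So the sum, and hence the average over the unchanged set, does not increase.

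The main obstacle I expect is the multigraph bookkeeping for the $a(i)$ term. Edges between the same pair of vertices come with multiplicity $s'$, and Maker may claim a pair twice, once in each direction. I must ensure that only Breaker's degree, not the multiplicity, enters the danger. This is exactly where the factor $s'$ replaces the usual constant $1$ from the simple-graph argument.
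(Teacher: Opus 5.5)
Your decomposition (Maker's move handled via the maximal-danger choice of $v_i$ among active vertices, Breaker's move by counting endpoints in $J_i$, then combining) is the route the paper takes, since it follows Lemmas 5.3.3, 5.3.4 and Corollary 5.3.5 of \cite{hefetz2014positional} line by line. Part (a) and the term $\frac{2b}{|J_i|}$ are handled correctly.

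The step that fails is the second term of (b). You assert that the Breaker edges inside $J_i$ claimed in his $i$-th move ``are counted by the increment $a(i)-a(i-1)$'', so that the increase is at most $b+a(i)-a(i-1)$. This is false whenever $J_i=J_{i-1}\setminus\{v_i\}$. The quantity $a(i-1)$ refers to the larger set $J_{i-1}$, so it also counts Breaker's earlier edges between $v_i$ and $J_i$, and these are not counted in $a(i)$. For instance, suppose Breaker already owns one edge from $v_i$ to $J_i$ and now places all $b$ edges inside $J_i$. Then the danger sum over $J_i$ rises by $2b$, which exceeds $b+a(i)-a(i-1)$. Separately, the count itself should read ``one per Breaker edge meeting $J_i$ (at most $b$) plus one extra per edge inside $J_i$''. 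Your version, ``two per inside edge plus up to $b$'', gives $b+2e_i$ rather than $b+e_i$.

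The correct bookkeeping is as follows. Let $e_i$ be the number of Breaker edges inside $J_i$ claimed in his $i$-th move. Let $m_i$ be the number of Breaker edges between $v_i$ and $J_i$ claimed before that move. Then $e_i=a(i)-a(i-1)+m_i$; when $J_i=J_{i-1}$ simply $e_i=a(i)-a(i-1)$. Since $v_i$ is joined to each vertex of $J_i$ by at most $s'$ edges of $H$, you get $m_i\le s'|J_i|$. Hence the increase of $\sum_{v\in J_i}\text{dang}(v)$ is at most $b+e_i\le b+a(i)-a(i-1)+s'|J_i|$. Dividing by $|J_i|$ gives exactly the second term of the minimum, with no rounding involved, because $|J_{i-1}|-1=|J_i|$.

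Your paragraph on $s'$ gestures at $m_i$, but presents it as something needed ``in the telescoping later''. In fact it is the missing correction in this per-round inequality itself, and your earlier sentence must be replaced by the identity above. The telescoping in the appendix only adds up these $s'$ terms over the at most $k$ shrinking rounds with $|J_i|\le k$, which produces the $-ks'$ there. With this repair your argument coincides with the paper's, whose stated reason for the summand $s'$ is exactly this multiplicity bound on edges from a vertex into $J_i$.
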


We skip the proof of this claim and note that
the only difference, when compared with
Corollary~5.3.5 in~\cite{hefetz2014positional}, is the summand $s'$ at the end of (b). This comes from the fact that now every vertex in $J_i$ is incident to less than 
$s'|J_i|$ edges within the set $J_i$.

Then, having Claim~\ref{clm:mindeg.analysis}, the most interesting rounds are those with $J_i \neq J_{i-1}$. Following the proof
after Corollary 5.3.5 in~\cite{hefetz2014positional}, we let
$1 \leq i_1 < \ldots < i_r \leq g-1$ be the indices of the rounds where this happens. If $r > k :=\frac{n}{\log(n)}$ we get
\begin{align*}
0 = \overline{\text{dang}}(M_1) \geq 
\overline{\text{dang}}(M_g) - \frac{b}{1} - \frac{b}{2} - \ldots - \frac{b}{k} - ks' - \frac{2b}{k+1} - \frac{2b}{k+1} - \ldots - \frac{2b}{r} 
\end{align*}
by the same argument as in~\cite{hefetz2014positional},
where the only difference to~\cite{hefetz2014positional} is the term "$-ks'$", due to change needed in Claim~\ref{clm:mindeg.analysis}.
Estimating further we get
\begin{align*}
0  = \overline{\text{dang}}(M_1)
& \geq 
(1-4\eps)s'n - b(\log(k) + 1) - ks' - 2b(\log(2n) - \log(k)) \\
& \geq 
(1-4\eps)s'n - b(\log(n) + 1) - 2b - 2b(\log(2)+\log\log(n)) \\
& \geq 
(1-4\eps)s'n - (1-\delta)s'n - o(s'n) 
=
(\delta - 4\eps - o(1))s'n > 0,
\end{align*}
a contradiction. 

If instead $r \leq  k :=\frac{n}{\log(n)}$ we get
\begin{align*}
0 = \overline{\text{dang}}(M_1) \geq 
\overline{\text{dang}}(M_g) - \frac{b}{1} - \frac{b}{2} - \ldots - \frac{b}{r} - rs' > 0
\end{align*}
again by the same argument as in~\cite{hefetz2014positional}, giving a contradiction.
\end{proof}

\end{document}